\theoremstyle{definition}
\newtheorem{lemma}{Lemma}[section]
\newtheorem{definition}[lemma]{Definition}
\newtheorem{proposition}[lemma]{Proposition}
\newtheorem{theorem}[lemma]{Theorem}
\newtheorem{remark}{Remark}
\numberwithin{equation}{section}
\newenvironment{customthm}[1]
{\innercustomthm}
{\endinnercustomthm}
\begin{document}

\def\RR{\mathbb{R}}
\def\ZZ{\mathbb{Z}}

\newcommand{\abs}[1]{\left\vert#1\right\vert}
\newcommand{\ub}{\underline{u}}
\newcommand{\Cb}{\underline{C}}
\newcommand{\Lb}{\underline{L}}
\newcommand{\Lh}{\hat{L}}
\newcommand{\Lbh}{\hat{\Lb}}
\newcommand{\phib}{\underline{\phi}}
\newcommand{\Phib}{\underline{\Phi}}
\newcommand{\Db}{\underline{D}}
\newcommand{\Dh}{\hat{D}}
\newcommand{\Dbh}{\hat{\Db}}
\newcommand{\omb}{\underline{\omega}}
\newcommand{\omh}{\hat{\omega}}
\newcommand{\ombh}{\hat{\omb}}
\newcommand{\Pb}{\underline{P}}
\newcommand{\chib}{\underline{\chi}}
\newcommand{\chih}{\hat{\chi}}
\newcommand{\chibh}{\hat{\chib}}

\newcommand{\alb}{\underline{\alpha}}
\newcommand{\zeb}{\underline{\zeta}}
\newcommand{\beb}{\underline{\beta}}
\newcommand{\etb}{\underline{\eta}}
\newcommand{\Mb}{\underline{M}}
\newcommand{\oth}{\hat{\otimes}}


\def\a {\alpha}
\def\b {\beta}
\def\ab {\alphab}
\def\bb {\betab}
\def\nab {\nabla}

\def\ub {\underline{u}}
\def\th {\theta}
\def\Lb {\underline{L}}
\def\Hb {\underline{H}}
\def\chib {\underline{\chi}}
\def\chih {\hat{\chi}}
\def\chibh {\hat{\underline{\chi}}}
\def\omegab {\underline{\omega}}
\def\etab {\underline{\eta}}
\def\betab {\underline{\beta}}
\def\alphab {\underline{\alpha}}
\def\Psib {\underline{\Psi}}
\def\hot{\widehat{\otimes}}
\def\Phib {\underline{\Phi}}
\def\thb {\underline{\theta}}
\def\t {\tilde}
\def\st {\tilde{s}}

\def\rhoc{\check{\rho}}
\def\sigmac{\check{\sigma}}
\def\Psic{\check{\Psi}}
\def\kappab{\underline{\kappa}}
\def\betabc {\check{\underline{\beta}}}

\def\d {\delta}
\def\f {\frac}
\def\i {\infty}
\def\l {\bigg(}
\def\r {\bigg)}
\def\S {S_{u,\underline{u}}}
\def\o{\omega}
\def\be{\begin{equation}\begin{split}}
\def\en{\end{split}\end{equation}}
\def\at{a^{\frac{1}{2}}}
\def\af{a^{\frac{1}{4}}}
\def\od{\omega^{\dagger}}
\def\ombd{\underline{\omega}^{\dagger}}
\def\K{K-\frac{1}{|u|^2}}
\def\ut{\frac{1}{|u|^2}}
\def\s{\frac{\delta a^{\frac{1}{2}}}{|u|}}
\def\Kb{K-\frac{1}{(u+\underline{u})^2}}
\def\bf{b^{\frac{1}{4}}}
\def\bt{b^{\frac{1}{2}}}
\def\de{\delta}
\def\ls{\lesssim}
\def\om{\omega}
\def\Om{\Omega}

\newcommand{\e}{\epsilon}
\newcommand{\et} {\frac{\epsilon}{2}}
\newcommand{\ef} {\frac{\epsilon}{4}}
\newcommand{\LH} {L^2(H_u)}
\newcommand{\LHb} {L^2(\underline{H}_{\underline{u}})}
\newcommand{\M} {\mathcal}
\newcommand{\TM} {\tilde{\mathcal}}
\newcommand{\p}{\psi\hspace{1pt}}
\newcommand{\q}{\underline{\psi}\hspace{1pt}}
\newcommand{\Li}{_{L^{\infty}(S_{u,\underline{u}})}}
\newcommand{\Lt}{_{L^{2}(S)}}
\newcommand{\da}{\delta^{-\frac{\epsilon}{2}}}
\newcommand{\db}{\delta^{1-\frac{\epsilon}{2}}}
\newcommand{\D}{\Delta}


\renewcommand{\div}{\mbox{div }}
\newcommand{\curl}{\mbox{curl }}
\newcommand{\trchb}{\mbox{tr} \chib}
\def\trch{\mbox{tr}\chi}
\newcommand{\tr}{\mbox{tr}}

\newcommand{\Ls}{{\mathcal L} \mkern-10mu /\,}
\newcommand{\eps}{{\epsilon} \mkern-8mu /\,}

\newcommand{\xib}{\underline{\xi}}
\newcommand{\psib}{\underline{\psi}}
\newcommand{\rhob}{\underline{\rho}}
\newcommand{\thetab}{\underline{\theta}}
\newcommand{\gammab}{\underline{\gamma}}
\newcommand{\nub}{\underline{\nu}}
\newcommand{\lb}{\underline{l}}
\newcommand{\mub}{\underline{\mu}}
\newcommand{\Xib}{\underline{\Xi}}
\newcommand{\Thetab}{\underline{\Theta}}
\newcommand{\Lambdab}{\underline{\Lambda}}
\newcommand{\vphb}{\underline{\varphi}}

\newcommand{\ih}{\hat{i}}

\newcommand{\tcL}{\widetilde{\mathscr{L}}}

\newcommand{\sRic}{Ric\mkern-19mu /\,\,\,\,}
\newcommand{\sL}{{\cal L}\mkern-10mu /}
\newcommand{\sLh}{\hat{\sL}}
\newcommand{\sg}{g\mkern-9mu /}
\newcommand{\seps}{\epsilon\mkern-8mu /}
\newcommand{\sd}{d\mkern-10mu /}
\newcommand{\sR}{R\mkern-10mu /}
\newcommand{\snab}{\nabla\mkern-13mu /}
\newcommand{\sdiv}{\mbox{div}\mkern-19mu /\,\,\,\,}
\newcommand{\scurl}{\mbox{curl}\mkern-19mu /\,\,\,\,}
\newcommand{\slap}{\mbox{$\triangle  \mkern-13mu / \,$}}
\newcommand{\sGamma}{\Gamma\mkern-10mu /}
\newcommand{\somega}{\omega\mkern-10mu /}
\newcommand{\somb}{\omb\mkern-10mu /}
\newcommand{\spi}{\pi\mkern-10mu /}
\newcommand{\sJ}{J\mkern-10mu /}
\renewcommand{\sp}{p\mkern-9mu /}
\newcommand{\su}{u\mkern-8mu /}

\title[Trapped Surface Formation in Spherical Symmetry]{Trapped surface formation for spherically symmetric Einstein-Maxwell-charged scalar field system with double null foliation}

\date{\today}

\author{Xinliang An$^*$$^1$}\author{Zhan Feng Lim$^2$}
\address{$^1$\small Department of Mathematics, National University of Singapore,
10 Lower Kent Ridge Road, Singapore, 119076}
\email{matax@nus.edu.sg}

\address{$^2$\small Department of Mathematics, National University of Singapore,
10 Lower Kent Ridge Road, Singapore, 119076}
\email{zflim@u.nus.edu}


\begin{abstract}
{\color{black} In this paper, under spherical symmetry we prove a trapped surface formation criterion for the Einstein-Maxwell-charged scalar field system. We generalize an approach introduced by Christodoulou for studying the Einstein-scalar field. In appendix, with double null foliation we also reprove Christodoulou's result and for Minkowskian incoming characteristic initial data, we improve Christodoulou's bound.  
}
\end{abstract}

\maketitle

\section{Introduction}

{\color{black}

\subsection{Motivation}
{\color{black}
In} a series of papers \cite{Chr.1}-\cite{Chr.4}, Christodoulou studied singularity formation for \textcolor{black}{the} Einstein-scalar field system:
\begin{equation}\label{ES}
\begin{split}
&\mbox{Ric}_{\mu\nu}-\f12Rg_{\mu\nu}={\color{black}8\pi      } T_{\mu\nu},\\
&T_{\mu\nu}=\partial_{\mu}\phi \partial_{\nu}\phi-\f12g_{\mu\nu}\partial^{\sigma}\phi \partial_{\sigma}\phi. 
\end{split}
\end{equation}

\noindent \textcolor{black}{Through these papers}, Christodoulou proved {\color{black}in four steps} that \textcolor{black}{under spherical symmetry, the} \textit{weak cosmic censorship conjecture} holds. {\color{black} More precisely, Christodoulou proved that for (\ref{ES}) with} large initial data, {\color{black} a so-called naked singularity} may form{\color{black}; however,} for generic initial data, these singularities {\color{black} are}  covered by a black hole region and are invisible for observers far away. 
{\color{black}These are celebrated results.}

The Penrose diagram of a spherically symmetric gravitational collapse spacetime {\color{black}for (\ref{ES}) with generic initial data} is as {\color{black}{follows}}: 

\begin{center}
\begin{minipage}[!t]{0.4\textwidth}
\begin{tikzpicture}[scale=0.75]
\draw [white](-1, -2.5)-- node[midway, sloped, above,black]{$\Gamma$}(0, -2.5);
\draw [white](0, 0)-- node[midway, sloped, above,black]{$\mathcal{S}$}(4, 0);
\draw [white](0, -0.75)-- node[midway, sloped, above,black]{$\mathcal{T}$}(4.5, -0.75);
\draw [white](-1, 0)-- node[midway, sloped, above,black]{$\mathcal{S}_0$}(1, 0);
\draw [white](5.5, 0.2)-- node[midway, sloped, above,black]{$i^+$}(7, 0.2);
\draw [white](10, -4.8)-- node[midway, sloped, above,black]{$i^0$}(12.5, -4.8);
\draw (0,0) to [out=-5, in=195] (5.5, 0.5);
\draw (0,0) to [out=-40, in=215] (5.5, 0.5);
\draw [white](0, -3)-- node[midway, sloped, above,black]{$\mathcal{H}$}(7, -3);
\draw [white](7, -2)-- node[midway, sloped, above,black]{$\mathcal{I^+}$}(10, -2);
\draw [white](0, -0.65)-- node[midway, sloped, below,black]{$\mathcal{A}$}(2.8, -0.65);

\draw [thick] (0, -5)--(0,0);
\draw [thick] (5.5, 0.5)--(0,-5);
\draw[fill] (0,0) circle [radius=0.08];
\draw[fill] (5.5, 0.5) circle [radius=0.08];
\draw[fill] (11, -5) circle [radius=0.08];
\draw [thick] (5.5, 0.5)--(11,-5);
\draw [thick] (0,-5) to [out=5, in=165] (11, -5);
\end{tikzpicture}
\end{minipage}
\begin{minipage}[!t]{0.6\textwidth}
\end{minipage}
\hspace{0.05\textwidth}
\end{center}
\noindent Here{\color{black}{,}} $\Gamma$ is the center of symmetry$-$invariant under $SO(3)${\color{black}, and} $i^+, \mathcal{I}^+, i_0$ are timelike infinity, future null infinity{\color{black}{,}} and spacelike infinity {\color{black}respectively}. The boundary of the causal past of $i^+$ is $\mathcal{H}$, {\color{black}which} is called {\color{black}the} event horizon. $\mathcal{T}$ is the trapped region, where {\color{black}not even light can} escape to $\mathcal{I^+}$. $\mathcal{A}$ is called {\color{black}the} apparent horizon and it is the lower boundary of $\mathcal{T}$. $\mathcal{S}_0$ is the first singular point along $\Gamma$ and $\mathcal{S}$ is the singular boundary of $\mathcal{T}$.

A crucial step of Christodoulou's {\color{black}proof of the weak cosmic censorship conjecture} is \cite{Chr.1}. There, Christodoulou established a sharp trapped surface\footnote{A trapped surface is a two-dimensional sphere, with both incoming and outgoing null expansions negative.} formation criterion for (\ref{ES}). Christodoulou's original proof in \cite{Chr.1} was based on a geometric Bondi coordinate {\color{black}system} with a null frame. 

{\color{black}However, at present, the double null foliation is a more popular choice of coordinate system.} {\color{black}There have been many recent works published in general relativity using a double null foliation}. {\color{black}In order to generalize Christodoulou's results} in \cite{Chr.1}-\cite{Chr.4} to other matter {\color{black}models}, here we {\color{black}adopt the} double null foliation. In our paper, we {\color{black}will review Christodoulou's result in the setting of a double null foliation. Then,} we {\color{black}will} generalize his result to {\color{black}the} Einstein-Maxwell-{\color{black}charged }scalar field system. 

{\color{black}
Within {\color{black}the study of spherically symmetric systems}, there are interesting results {\color{black}on formation of trapped surfaces and singularities} for other matter {\color{black}models}, e.g{\color{black}.} Einstein-Vlasov studied by Andr\'easson \cite{And}, And\'easson-Rein\cite{AR},  Moschidis\cite{Mo},  Einstein-Euler studied by Burtscher and LeFloch \cite{BL}, Einstein-scalar field studied by Li-Liu \cite{LL}, An-Zhang \cite{AZ}, An-Gajic \cite{AG}, Einstein-null dust studied by Moschidis\cite{Mo2}, Einstein-scalar field with positive cosmological constant by Costa \cite{JC}. For {\color{black}the} Einstein-Maxwell-(real) scalar field system, {\color{black}we refer interested readers} to \cite{Da1, Da2} by Dafermos, \cite{LO} by Luk and Oh on the recent development of proving strong cosmic censorship. {\color{black}And for the Einstein-Maxwell-charged scalar field system}, we refer to \cite{VDM}-\cite{VDM3} by Van de Moortel.} 

\subsection{The Main Result}

{\color{black}We consider the characteristic initial value problem for (\ref{ES}) in the {\color{black}rectangular} region.}

\begin{minipage}[!t]{0.4\textwidth}
	\begin{tikzpicture}[scale=0.9]
	\node[] at (1.25,3.25) {\LARGE $\mathcal{R}$};
	\begin{scope}[thick]
	\draw[->] (0,0) node[anchor=north]{$(u_0,0)$} -- (0,5)node[anchor = east]{$\Gamma$};
	\draw[->] (0,0) --node[anchor = north]{$v$} (3,3);
	\draw[->] (1.75,1.75) node[anchor=west]{$(u_0,v_1)$} --node[anchor=north]{$u$} (0,3.5)node[anchor = east]{$(0,v_1)$};
	\draw[->] (2.75,2.75) node[anchor=west]{$(u_0,v_2)$} -- (1,4.5);
	\end{scope}
	\begin{scope}[gray]
	\draw (2,2) -- (0.25,3.75);
	\draw (2.25,2.25) -- (0.5,4);
	\draw (2.5,2.5) -- (0.75,4.25);
	\draw(1.5,2) -- (2.5,3);
	\draw(1.25,2.25) -- (2.25,3.25);
	\draw(1,2.5) -- (2,3.5);
	\draw(0.75,2.75) -- (1.75,3.75);
	\draw(0.5,3) -- (1.5,4);
	\draw(0.25,3.25) -- (1.25,4.25);
	\draw(0,3.5) -- (1,4.5);
	\end{scope}
	\end{tikzpicture}
\end{minipage}
\begin{minipage}[!t]{0.58\textwidth}
We {\color{black}employ} the double-null foliation {\color{black}with} $u$ and $v$ {\color{black}as}  optical functions{\color{black}; that is,} {\color{black} $g^{\a\b}\partial_{\a}u\partial_{\b}u=0$ and $g^{\a\b}\partial_{\a}v\partial_{\b}v=0$. Thus, we have} $u=\mbox{{\color{black}constant}}$  {\color{black}as} the outgoing null hypersurface; $v=\mbox{{\color{black}constant}}$ {\color{black}as} the incoming null hypersurface. \\

\noindent {\color{black}Due to} spherical symmetry, {\color{black}we have a central axis $\Gamma$.} We prescribe initial data along {\color{black}the }outgoing cone $u=u_0$ and {\color{black}the} incoming cone $v=v_1$.
\end{minipage}
\hspace{0.05\textwidth}

\noindent For the metric of the $3+1$-dimensional spacetime, we {\color{black}impose spherical symmetry and write it with double-null coordinates:} 
\begin{equation}\label{metric0}
g_{\mu\nu}dx^{\mu}dx^{\nu}=-\Omega^2(u,v)dudv+r^2(u,v)\big(d\theta^2+\sin^2\theta d\phi^2\big).
\end{equation}
\noindent In {\color{black}the} above diagram every point $(u,v)$ {\color{black}represents} a $2$-sphere $S_{u,v}$. {\color{black}The Hawking mass of such a 2-sphere} is defined as 
\begin{equation}\label{Hawking mass}
m(u,v)=\f{r}{2}(1+4\Omega^{-2}\partial_u r \partial_v r).
\end{equation}
\noindent Along $u=u_0$, we also define {\color{black}{the}} initial mass input
$$\eta_0:=\f{m(u_0, v_2)-m(u_0, v_1)}{r(u_0, v_2)}, \mbox{ and denote } \d_0:=\f{r(u_0, v_2)-r(u_0, v_1)}{r(u_0, v_2)}.$$
{\color{black}Finally, let  $u_*$ denote the value of $u\in[u_0,u_*]$ such that ${\color{black}r(u_*, v_2)}=\frac{3\delta_0}{1+\delta_0}\cdot r(u_0, v_2)$.}

\begin{theorem}{\textcolor{black}{(Christodoulou \cite{Chr.1} and reproved in appendix)}}\label{thm1.1}\\
	{\color{black}Define the function}
	\begin{align*}
	E(x):=\frac{x}{(1+x)^2}\bigg[\ln\bigg(\frac{1}{2x}\bigg)+5-x\bigg].
	\end{align*}
{\color{black}Consider the system (\ref{ES}) with characteristic initial data along $u=u_0$ and $v=v_1$.} For initial mass input $\eta_0$ along $u=u_0$,  {\color{black}if the following lower bound holds:}
	\begin{align*}
	\eta_0>E(\delta_0),
	\end{align*}
		then a trapped surface {\color{black}$S_{u,v}$, with properties $\partial_v r(u,v)<0$} and $\partial_u r(u,v)< 0$, {\color{black}forms} in {\color{black}the region $[u_0,u_*]\times[v_1,v_2]\subset\mathcal{R}$} .
		\end{theorem}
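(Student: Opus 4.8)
I would follow Christodoulou's approach recast in double null coordinates, arguing by contradiction: if no trapped surface forms in the closed rectangle, a quantitative estimate forces $2m/r\ge 1$ at the corner $(u_*,v_2)$.

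First I would write down the reduced equations. With $\nu:=\partial_u r$, $\lambda:=\partial_v r$, $\mu:=2m/r$, the system (\ref{ES}) under the ansatz (\ref{metric0}) becomes the Raychaudhuri equations $\partial_u(\Omega^{-2}\nu)=-4\pi r\Omega^{-2}(\partial_u\phi)^2$, $\partial_v(\Omega^{-2}\lambda)=-4\pi r\Omega^{-2}(\partial_v\phi)^2$, together with $\partial_u(r\lambda)=\partial_v(r\nu)=-\tfrac14\Omega^2$, the wave equation $\partial_u(r\partial_v\phi)=-\lambda\partial_u\phi$, and the constraint $1-\mu=-4\Omega^{-2}\lambda\nu$. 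Wherever $\mu<1$ (equivalently $\lambda>0$, since $\nu<0$) these yield: the mass monotonicity $\partial_u m=-8\pi r^2\Omega^{-2}\lambda(\partial_u\phi)^2\le0$ and $\partial_v m=-8\pi r^2\Omega^{-2}\nu(\partial_v\phi)^2\ge0$; the propagation of $\nu<0$ and $\lambda>0$ off the initial cones $u=u_0$, $v=v_1$; the monotonicity of $\kappa:=\lambda/(1-\mu)=\Omega^2/(-4\nu)$ in $u$ (non-increasing) and of $\underline\kappa:=-\nu/(1-\mu)=\Omega^2/(4\lambda)$ in $v$ (non-decreasing); and the identities $\partial_u m=\big(\tfrac r2-m\big)\partial_u\log\kappa$, $\partial_v m=\big(\tfrac r2-m\big)\partial_v\log\underline\kappa$. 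I would exhaust the residual $v$-reparametrisation freedom by setting $\kappa\equiv1$ on $u=u_0$, so that $\kappa\le1$ on the whole domain of dependence.

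Next the bootstrap. Suppose no trapped surface occurs in $\overline{D}$, $D:=(u_0,u_*)\times(v_1,v_2)$; then $\mu<1$, $\lambda>0$, $\nu<0$ on $\overline{D}$ and, by a routine continuity argument, the solution extends to $\overline{D}$. It then suffices to reach a contradiction, and since $r$ is decreasing in $u$ while the Hawking mass is increasing in $v$, one should aim for $\mu(u_*,v_2)\ge1$, where $r(u_*,v_2)=\frac{3\delta_0}{1+\delta_0}r(u_0,v_2)$ by the definition of $u_*$. The hypothesis enters on $u=u_0$: writing $\varphi:=\tfrac r2-m=\tfrac r2(1-\mu)$ one has the identity $\varphi(u_0,v_2)=\varphi(u_0,v_1)+\big(\tfrac{\delta_0}{2}-\eta_0\big)r(u_0,v_2)$, so $\varphi(u_0,v_2)$ is forced small, and the mass influx $\eta_0\,r(u_0,v_2)=\int_{v_1}^{v_2}2\pi(1-\mu)r^2\frac{(\partial_v\phi)^2}{\lambda}(u_0,v)\,dv$ is carried inward along the rays $v=\mathrm{const}$. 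One then transports this: along $v=v_2$, parametrised by $r$ (decreasing from $r(u_0,v_2)$ to $r(u_*,v_2)$), the identity for $\partial_u m$ together with $\frac{d}{dr}\log\kappa=\frac{2\,dm/dr}{r-2m}$ turn the problem into a linear ODE $\varphi'(r)=\tfrac12-\varphi(r)\,g(r)$, $g:=\frac{d}{dr}\log\kappa\ge0$, whose ``source'' $g$ is where the incoming radiation feeds in and whose integrating factor satisfies $\exp\!\big(\int_{r(u_*,v_2)}^{r(u_0,v_2)}g\,dr\big)=\kappa(u_*,v_2)^{-1}\ge1$; integrating over this radial interval --- of width $\log\frac{r(u_0,v_2)}{r(u_*,v_2)}=\log\frac{1+\delta_0}{3\delta_0}$ in $\log r$ --- together with the influx bound is what produces the $\log\frac{1}{2\delta_0}$ term in $E$, while the three radii $r(u_0,v_1),r(u_0,v_2),r(u_*,v_2)$ produce the prefactor $\frac{\delta_0}{(1+\delta_0)^2}$ and the constants $5,-\delta_0$. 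The assumption $\eta_0>E(\delta_0)$ is exactly the inequality under which this forces $\varphi(u_*,v_2)<0$, i.e. $\mu(u_*,v_2)>1$: then $S_{u,v}$ there has $\partial_v r<0$, while $\partial_u r=\nu<0$ by the propagation above --- a trapped surface in $[u_0,u_*]\times[v_1,v_2]$, contradicting the assumption.

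The main obstacle is the sharp bookkeeping. A soft version --- a trapped surface forms once $\eta_0\gtrsim\delta_0\log(1/\delta_0)$ --- follows from a crude Gr\"onwall/comparison argument along $v=v_2$; but the structural identities above are very ``tight'' (nearly tautological) along a single incoming ray, so extracting the exact threshold $E(\delta_0)$, with the correct bracket and the $\frac{\delta_0}{(1+\delta_0)^2}$ prefactor, requires genuinely coupling the influx data on $u=u_0$ to the evolution --- for instance tracking the interior mass $m(u,v_1)\ge0$ together with $\int_{v_1}^{v_2}\partial_v m(u,v)\,dv$ and its $u$-derivative via the wave equation --- and choosing the comparison function and the gauge so that no inequality is wasted and the endpoint radii are matched exactly. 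A secondary point is the continuity argument in the bootstrap (propagation of $\nu<0,\ \lambda>0$ and extendibility while $\mu<1$), which is routine.
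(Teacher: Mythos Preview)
Your overall contradiction setup and the reduced system are correct, and you are right that the endgame is to force $\mu(u_*,v_2)\ge 1$. However, the mechanism you foreground --- the linear ODE $\varphi'(r)=\tfrac12-\varphi(r)g(r)$ along the single incoming cone $v=v_2$ --- does not close. Its solution is $\varphi(r_*)=\kappa(u_*,v_2)^{-1}\bigl(\varphi(r_0)-\tfrac12\int_{r_*}^{r_0}\kappa\,dr\bigr)$, and to get $\varphi(r_*)<0$ you need a \emph{lower} bound on $\int\kappa$ along $v=v_2$; but $\kappa$ in the interior (equivalently your ``source'' $g=\tfrac{d}{dr}\log\kappa$) is determined by $(\partial_u\phi)^2$ at $(u,v_2)$, which is not controlled by the data on $u=u_0$ or $v=v_1$ without further input. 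You acknowledge this is ``nearly tautological'' and gesture toward tracking $\int_{v_1}^{v_2}\partial_v m\,dv$ --- that is exactly what the paper does, but you have not identified the two estimates that make it work.

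The paper tracks $\eta(u)=\tfrac{2(m_2-m_1)}{r_2}$ as a function of $x=r_2(u)/r_2(u_0)$ and computes
\[
\frac{d\eta}{dx}=-\frac{\eta}{x}-\frac{16\pi\,\Omega_2^{-2}\partial_v r_2}{x\,\partial_u r_2}\Bigl(r_2^2(\partial_u\phi_2)^2-\tfrac{\Omega_1^{-2}\partial_v r_1}{\Omega_2^{-2}\partial_v r_2}\,r_1^2(\partial_u\phi_1)^2\Bigr).
\]
The two missing ingredients are: (i) integrating the wave equation $\partial_v(r\partial_u\phi)=-\partial_u r\,\partial_v\phi$ across $[v_1,v_2]$ and applying Cauchy--Schwarz to get $\Theta^2:=(r_2\partial_u\phi_2-r_1\partial_u\phi_1)^2\le\tfrac{-\partial_u r_2}{8\pi\,\Omega_2^{-2}\partial_v r_2}(m_2-m_1)\bigl(\tfrac1{r_1}-\tfrac1{r_2}\bigr)$; and (ii) integrating the $v$-Raychaudhuri equation to get $\tfrac{\Omega_2^{-2}\partial_v r_2}{\Omega_1^{-2}\partial_v r_1}\le e^{-\eta}$. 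Writing the bracket as a quadratic in $\Theta$ and using (ii) to handle the cross term, then (i), yields the closed inequality $\tfrac{d\eta}{dx}\le -\tfrac{\eta}{x}\bigl(1-\delta(x)\bigr)+\tfrac{\delta(x)}{x}$ with $\delta(x)\le\tfrac{\delta_0}{x(1+\delta_0)-\delta_0}$; its integrating-factor solution, evaluated at $x_*=\tfrac{3\delta_0}{1+\delta_0}$, produces exactly $E(\delta_0)$. Your sketch contains neither (i) nor (ii), and without them the sharp threshold cannot be reached.
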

\begin{remark}For $0<\d_0\ll 1$, {\color{black}we can check that the order of the lower bound of $\eta_0$, $E(\d_0)$, is of order $\d_0\ln(\f{1}{\d_0})$. Hence,} if $\eta_0\gtrsim\delta_0\ln\bigg(\frac{1}{\delta_0}\bigg)$, {\color{black}a trapped surface is guaranteed to form within $\mathcal{R}$}.
\end{remark}

\noindent The above theorem is crucial for Christodoulou's final proof of {\color{black}the} weak cosmic censorship in \cite{Chr.4}. There{\color{black},} Christodoulou studied the first singular point formed in {\color{black}the evolution of (\ref{ES})}: if that point is not covered by a trapped region, then a perturbation of the initial data would {\color{black}lead to} the condition in Theorem \ref{thm1.1} being satisfied{\color{black}. Hence,}  a trapped surface would form to cover that singular point. \\

\noindent \textcolor{black} {We provide a reproof of Theorem \ref{thm1.1} in the appendix. While Christodoulou's proof was written in Bondi coordinates, here we have rewritten it in a double null foliation.} Double null foliations are widely used in studying both the exterior and interior regions of black holes for various matter models. Many results pertaining to spherical symmetry are also based on double null foliations. Hence, there is strong motivation to rewrite \cite{Chr.1} with a double null foliation.\\

\noindent \textcolor{black}{By strenghtening the hypothesis on the initial data in Theorem \ref{thm1.1}, in appendix we also improve Christodoulou's bound:}

\begin{minipage}[!t]{0.4\textwidth}
	\begin{tikzpicture}[scale=0.9]
	\node[] at (1.25,3.25) {\LARGE $\mathcal{R}$};
	\node[] at (0.75,1.65) { $\mathcal{D}(0,v_1)$}; 
	\begin{scope}[thick]
	\draw[->] (0,0) node[anchor=north]{$(u_0,0)$} -- (0,5)node[anchor = east]{$\Gamma$};
	\draw[->] (0,0) --node[anchor = north]{$v$} (3,3);
	\draw[->] (1.75,1.75) node[anchor=west]{$(u_0,v_1)$} --node[anchor=north]{$u$} (0,3.5)node[anchor = east]{$(0,v_1)$};
	\draw[->] (2.75,2.75) node[anchor=west]{$(u_0,v_2)$} -- (1,4.5);
	\end{scope}
	\begin{scope}[gray]
	\draw (2,2) -- (0.25,3.75);
	\draw (2.25,2.25) -- (0.5,4);
	\draw (2.5,2.5) -- (0.75,4.25);
	\draw(1.5,2) -- (2.5,3);
	\draw(1.25,2.25) -- (2.25,3.25);
	\draw(1,2.5) -- (2,3.5);
	\draw(0.75,2.75) -- (1.75,3.75);
	\draw(0.5,3) -- (1.5,4);
	\draw(0.25,3.25) -- (1.25,4.25);
	\draw(0,3.5) -- (1,4.5);
	\end{scope}
	\end{tikzpicture}
\end{minipage}
\begin{minipage}[!t]{0.58\textwidth}
	{\color{black}\begin{theorem}\label{thm1.2}
		Assume that {\color{black}Minkowskian data are prescribed along $v=v_1$ and require $\phi(u,v_1)=0$}. Suppose that the following lower bound on $\eta_0$ holds:
		\begin{align*}
		\eta_0>\f92\delta_0,
		\end{align*}
		then there exist a MOTS {\color{black}or a trapped surface} in $[u_0,u_*]\times[v_1,v_2]\subset\mathcal{R}$, i.e. $\partial_vr\leq 0$ at some point in $[u_0,u_*]\times[v_1,v_2]$.
	\end{theorem}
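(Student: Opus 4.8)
The plan is to argue by contradiction, running the scheme of the proof of Theorem~\ref{thm1.1} given in the appendix, but feeding in the Minkowskian hypothesis on $v=v_1$ so as to replace the $\ln(1/\delta_0)$ hidden in $E(\delta_0)$ by an $O(1)$ constant. \emph{Setup and a reduction.} Suppose the rectangle $\mathcal R:=[u_0,u_*]\times[v_1,v_2]$ contains no MOTS or trapped surface, i.e.\ $\partial_v r>0$ throughout $\mathcal R$. Since the data on $v=v_1$ is Minkowskian, $m\equiv 0$ and $\partial_u r<0$ there, and, as $\partial_v\partial_u r=-\Omega^2 m/(2r^2)\le 0$ wherever $m\ge 0$, the inequality $\partial_u r<0$ propagates to all of $\mathcal R$; in particular $m\ge 0$ on $\mathcal R$ (using $\partial_v m\ge 0$ and $m|_{v=v_1}=0$). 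Writing $\mu:=2m/r$, \eqref{Hawking mass} gives $1-\mu=-4\Omega^{-2}\partial_u r\,\partial_v r\in(0,1]$ on $\mathcal R$, i.e.\ $0\le 2m<r$. Hence it is enough to exhibit a point of $\mathcal R$ with $\mu\ge 1$ (equivalently $\partial_v r\le 0$); the first such point is then the desired MOTS. We also record $m(u_0,v_1)=0$, whence $\mu(u_0,v_2)=2\eta_0$ (if $\eta_0\ge\tfrac12$ this point is already trapped, so assume $\eta_0<\tfrac12$).

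\emph{A differential inequality on the last incoming cone.} I would follow $\mu$ along $\underline{H}_{v_2}$, parametrised by $r$, which decreases from $r_2:=r(u_0,v_2)$ to $r_*:=r(u_*,v_2)=\tfrac{3\delta_0}{1+\delta_0}r_2$. Combining the mass law $\partial_u m=-\tfrac c2(1-\mu)\,r^2(\partial_u\phi)^2/|\partial_u r|$ (a positive constant $c$; valid in the untrapped region) with $1-\mu=4|\partial_u r|\partial_v r/\Omega^2$ gives the identity
\[
\frac{d}{dr}\big(r(1-\mu)\big)=1-c\,(1-\mu)\,r^2\,(\partial_u\phi/\partial_u r)^2\qquad\text{along }\underline{H}_{v_2}.
\]
With $w:=r(1-\mu)$ this is the linear ODE $w'=1-c\,r\,(\partial_u\phi/\partial_u r)^2\,w$, which, on setting $A(r):=c\int_r^{r_2}s\,(\partial_u\phi/\partial_u r)^2\,ds\ge 0$ (all quantities taken along $\underline{H}_{v_2}$), integrates to
\[
w(r)=e^{A(r)}\Big(r_2(1-2\eta_0)-\int_r^{r_2}e^{-A(s)}\,ds\Big).
\]
So a MOTS has appeared by the time $r$ has shrunk to a given value as soon as $\int_r^{r_2}e^{-A(s)}\,ds\ge r_2(1-2\eta_0)$. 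Using $e^{-A}\ge 1-A$ and $A(s)\le A(r_*)$ for $s\ge r_*$, this holds at $r=r_*$ provided $A(r_*)\le\big(2\eta_0(1+\delta_0)-3\delta_0\big)/(1-2\delta_0)$, which under $\eta_0>\tfrac92\delta_0$ is in turn implied by the single \emph{flux bound}
\[
A(r_*)=c\int_{r_*}^{r_2}s\,(\partial_u\phi/\partial_u r)^2\,ds\ \lesssim\ \delta_0 .
\]
The theorem thus reduces to this flux bound.

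\emph{Where the Minkowskian hypothesis enters.} From $\phi\equiv 0$ on $v=v_1$ one has $r\phi\equiv 0$, hence $\partial_u(r\phi)=0$, on $v=v_1$; integrating the wave identity $\partial_v\big(\partial_u(r\phi)\big)=\phi\,\partial_u\partial_v r=-\Omega^2 m\phi/(2r^2)$ in $v$ gives
\[
\partial_u(r\phi)(u,v)=-\int_{v_1}^{v}\frac{\Omega^2 m\phi}{2r^2}\,dv'
\]
on every incoming cone in $\mathcal R$. Since $m<r/2$ on $\mathcal R$ this right-hand side is small; it is exactly the defect of $r\phi$ from being a function of $v$ alone, and its smallness—unavailable in the general setting of Theorem~\ref{thm1.1}—is what kills the logarithm. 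Using this, together with $m\equiv 0$ on $v=v_1$, $\partial_u m\le 0$, and $m(u_0,v_2)=\eta_0 r_2$, one bounds the flux $A(r_*)$ by a quantity of size $O(\delta_0)$, the numerical constant $\tfrac92$ coming out of keeping the $(1-\mu)$-weights in the mass law rather than discarding them.

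\emph{Main obstacle.} Establishing the flux bound $A(r_*)\lesssim\delta_0$ is the heart of the argument: the $(1-\mu)$ weight in $\partial_u m$ must not be thrown away—doing so destroys the feedback that drives $w$ to zero once it is small—and the displayed formula for $\partial_u(r\phi)$ couples $\phi$, $m$ and $1-\mu$ on $\underline{H}_{v_2}$ to their values throughout the interior of $\mathcal R$, so the estimate has to be closed by a continuity/bootstrap argument on the sub-region where $2m<r$. Carrying this out quantitatively enough to land on the sharp constant $\tfrac92\delta_0$, rather than some unspecified $O(1)\cdot\delta_0$, is the delicate point.
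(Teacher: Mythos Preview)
Your proposal has a genuine gap and misses the simple mechanism that makes the theorem work. You reduce everything to a ``flux bound'' $A(r_*)\lesssim\delta_0$ along $\underline H_{v_2}$ and then stop, conceding that establishing it with the sharp constant is ``the delicate point''. But you never prove it, and the route you sketch---via the integral representation of $\partial_u(r\phi)$ and a bootstrap---would require controlling interior quantities that you have not estimated. There is also an arithmetic slip: since $\eta_0=\tfrac{2(m_2-m_1)}{r_2}$ and $m_1=0$, one has $\mu(u_0,v_2)=\eta_0$, not $2\eta_0$; this propagates through your $w$-formula.

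The paper's argument avoids all of this. The point of the Minkowskian hypothesis is not the wave-equation identity you wrote, but simply that $\phi(u,v_1)\equiv 0$ forces $\partial_u\phi_1\equiv 0$, so that $\Theta:=r_2\partial_u\phi_2-r_1\partial_u\phi_1$ equals $r_2\partial_u\phi_2$ \emph{exactly}. This kills the step in the proof of Theorem~\ref{thm1.1} where one invokes Lemma~\ref{keylemma2real} to control the cross term $2\Theta r_1\partial_u\phi_1$; that step is what produced the factor $(1+1/\eta)$ and hence the inhomogeneous term $f(x)/x$ responsible for the logarithm in $E(\delta_0)$. With $\phi_1=0$ one feeds Lemma~\ref{keylemma1real} directly into the identity for $d\eta/dx$ and obtains the homogeneous inequality
\[
\frac{d\eta}{dx}\le -\frac{\eta}{x}+\frac{\eta}{x}\cdot\frac{\delta_0}{x(1+\delta_0)-\delta_0},
\]
which integrates in one line to $\eta_0\le \eta(x)\,\dfrac{x^2}{x(1+\delta_0)-\delta_0}$. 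Evaluating at $x=x_*=\tfrac{3\delta_0}{1+\delta_0}$ under the no-trapped-surface assumption $\eta(x)<1$ gives $\eta_0\le \tfrac{9}{2}\delta_0$, the desired contradiction. No flux bound, no bootstrap, no ODE for $w=r(1-\mu)$ is needed; the entire improvement over Theorem~\ref{thm1.1} comes from the single observation $\partial_u\phi_1\equiv 0$.
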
}
\end{minipage}
\hspace{0.05\textwidth}	

\begin{remark}
{\color{black}{Theorem \ref{thm1.2}}} improves the almost scale critical result in Theorem \ref{thm1.1}, i.e., $\eta_0>\d_0 \ln \bigg(\f{1}{\d_0}\bigg)$ {\color{black}implies} trapped surface formation, to a scale critical result, i.e., $\eta_0>\f92\d_0$ {\color{black}{implies trapped surface formation}}. {\color{black}In \cite{AL}, the first author and Luk first noted that by prescribing Minkowskian data along $v=v_1$, for Einstein vacuum equations a scale-critical trapped surface formation criterion could be established.\footnote{For more discussions about scaling consideration, interested readers are also refereed to \cite{An2012} and \cite{An2019} by the first author.} For $a$ being a large universal constant, the corresponding requirement for $\eta_0$ is $\eta_0\geq \d a$. For Einstein-scalar field system under spherical symmetry, Theorem 1.2 improves the large universal constant $a$ into a \underline{concrete} number $9/2$.} \end{remark}

\noindent The main result of our paper is the next theorem. We generalize  {\color{black}{the}} above results to {\color{black}{the}} Einstein scalar field coupled with the electromagnetic field. More precisely, we consider the following {\color{black}Einstein-Maxwell-charged scalar field} system:
\begin{gather*}
     R_{\mu\nu}-\frac{1}{2}g_{\mu\nu}R = 8\pi T_{\mu\nu},\\
     T_{\mu\nu}=T^{SF}_{\mu\nu}+T^{EM}_{\mu\nu},\\
      T^{SF}_{\mu\nu} =\frac{1}{2}D_\mu\phi(D_\nu\phi)^\dag+\frac{1}{2}D_\nu\phi(D_{\mu}\phi)^\dag-\frac{1}{2}g_{\mu\nu}\big(g^{\alpha\beta} D_\alpha\phi(D_\beta\phi)^\dag\big),\\
    T_{\mu\nu}^{EM}=\frac{1}{4\pi}\big(g^{\alpha\beta}F_{\alpha\mu}F_{\beta\nu}-\frac{1}{4}g_{\mu\nu}F^{\alpha\beta}F_{\alpha\beta}\big).
\end{gather*}
{\color{black}Here,} the Einstein scalar field is coupled to the electromagnetic field by the following {\color{black}{form of the}} Maxwell equation:
\begin{align*}
    \nabla^\nu F_{\mu\nu} = 2\pi\mathfrak{e}i\big(\phi(D_\mu\phi)^\dag-\phi^\dag D_\mu\phi\big),
\end{align*}
where $D_\mu:={\color{black}\partial_{\mu}+\mathfrak{e}iA_\mu}$ is known as the gauge covariant derivative. {\color{black}Here $\mathfrak{e}$ is the coupling constant and $A_{\mu}$ is the electromagnetic potential.} {\color{black} Using the gauge covariant derivative instead of the usual derivative ensures} that the physical equations remain invariant under local $U(1)$ transformations on $\phi$.  

{\color{black}Recall that under spherical symmetry, we have the ansatz (\ref{metric0}). {\color{black}Using the $\Omega$ appearing in the ansatz}, we define the charge $Q(u,v)$ contained in a sphere $S(u,v)$ to be $Q:=2r^2\Omega^{-2}F_{uv}$.}

\begin{theorem}{\color{black}(Main Theorem)}\label{thm1.3} 
 {\color{black}Denoting the outgoing null hypersurface $u=u_0$  by $C$ and {\color{black}the} incoming null hypersurface $v=v_1$ by $\underline{C}$, we define
$$\epsilon:=\sup_{C\cup\underline{C}}\frac{Q^2}{r^2}<1, \mbox{ and }L:=\sup_{\underline{C}}r|\phi|^2.$$}

{\color{black}\noindent Let $\o$ be \underline{any} positive constant in $(0,\frac{2}{3})$. Choose $v_2-v_1$ sufficiently small such that

\begin{gather}
\frac{9\mathfrak{e}^2}{4(1-\epsilon)^2}(v_2-v_1)^2+\frac{12\pi L\mathfrak{e}}{1-\epsilon}(v_2-v_1)\leq\frac{\omega}{4}\label{first assumption on v2-v1},\\
\frac{45\pi\mathfrak{e}^2(v_2-v_1)^2}{\pi(1-\epsilon)^2}+160\pi\mathfrak{e}^2{\color{black}r}(u_0, {\color{black}v_2})\frac{v_2-v_1}{1-\epsilon}|\phi_1|^2\leq 4\omega\label{second assumption on v2-v1}.
\end{gather}
}

{\color{black}\noindent Further require that the initial data along $\underline{C}$ are \underline{not} supercharged, i.e. 
\begin{equation}\label{non supercharged}
m(u,v_1)\geq |Q|(u,v_1).
\end{equation}}
\noindent Denote
	\begin{align*}
			g_\omega(x):=\frac{1+\frac{\omega}{2}}{1-\frac{\omega}{2}}\frac{1}{(1+x)^2}\bigg(\bigg(\frac{2^{1-\frac{\omega}{2}}}{\omega}+\frac{1}{2^{1+\frac{\omega}{2}}(1+\frac{\omega}{2})}\bigg)x^{1-\frac{\omega}{2}}-\frac{2}{\omega}x-\frac{1}{1+\frac{\omega}{2}}x^2\bigg).
	\end{align*} 

\noindent Assume the following lower bound on $\eta_0$ holds
	
	\begin{align*}
	\eta_0>\max\bigg\{\frac{13\epsilon}{\omega}+g_\omega(\delta_0),\frac{9}{2^{1+\frac{\omega}{2}}(1+\delta_0)^2}\delta_0^{1-\frac{\omega}{2}}+g_\omega(\delta_0)\bigg\},
	\end{align*}
\begin{minipage}[!t]{0.4\textwidth}
	\begin{tikzpicture}[scale=0.9]
	\node[] at (1.25,3.25) {\LARGE $\mathcal{R}$};
	\node[] at (2.6,2.25) {C};
	\node[] at (0.75,2.25) {\underline{C}};
	\begin{scope}[thick]
	\draw[->] (0,0) node[anchor=north]{$(u_0,0)$} -- (0,5)node[anchor = east]{$\Gamma$};
	\draw[->] (0,0) --node[anchor = north]{$$} (3,3);
	\draw[->] (1.75,1.75) node[anchor=west]{$(u_0,v_1)$} --node[anchor=north]{} (0,3.5)node[anchor = east]{$(0,v_1)$};
	\draw[->] (2.75,2.75) node[anchor=west]{$(u_0,v_2)$} -- (1,4.5);
	\end{scope}
	\begin{scope}[gray]
	\draw (2,2) -- (0.25,3.75);
	\draw (2.25,2.25) -- (0.5,4);
	\draw (2.5,2.5) -- (0.75,4.25);
	\draw(1.5,2) -- (2.5,3);
	\draw(1.25,2.25) -- (2.25,3.25);
	\draw(1,2.5) -- (2,3.5);
	\draw(0.75,2.75) -- (1.75,3.75);
	\draw(0.5,3) -- (1.5,4);
	\draw(0.25,3.25) -- (1.25,4.25);
	\draw(0,3.5) -- (1,4.5);
	\end{scope}
	\end{tikzpicture}
\end{minipage}
\begin{minipage}[!t]{0.58\textwidth}

\noindent then a trapped surface {\color{black}is guaranteed to form in $[u_0,u_*]\times[v_1,v_2]\subset\mathcal{R}$}. 

\end{minipage}
\hspace{0.05\textwidth}

\end{theorem}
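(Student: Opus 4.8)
The plan is to mimic the structure of Christodoulou's argument as reproved for Theorem~\ref{thm1.1}, but carry along the charge $Q$ and the electromagnetic energy in every monotonicity estimate. First I would write down the spherically symmetric Einstein--Maxwell--charged scalar field equations in double null coordinates, obtaining evolution equations for $r$, $\Omega^2$, the Hawking mass $m$, and the charge $Q$. The key structural facts I expect to need are: (i) a Raychaudhuri-type equation along the incoming direction of the form $\partial_v(\Omega^{-2}\partial_v r)=-r\Omega^{-2}|D_v\phi|^2\le 0$ (and its $u$-analog), which gives sign control of $\partial_v r$; (ii) a monotonicity formula for the mass ratio $2m/r$ along both null directions, now picking up the positive term $Q^2/r^2$ from the electromagnetic stress; and (iii) an evolution equation $\partial_v Q = 2\pi \mathfrak e\, r^2 (\text{something involving }\phi, D_v\phi)$ showing that $Q$ changes slowly over the short interval $[v_1,v_2]$, so that $Q^2/r^2$ stays close to $\epsilon$ throughout $\mathcal R$ thanks to assumptions~\eqref{first assumption on v2-v1}--\eqref{second assumption on v2-v1}. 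This last point is why the theorem requires $v_2-v_1$ small: it quarantines the charge so that it behaves like an almost-constant background perturbation of size $\epsilon$.

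Next I would set up the contradiction/continuity argument. Assume no trapped surface forms in $[u_0,u_*]\times[v_1,v_2]$; then $\partial_v r>0$ (equivalently $2m/r<1$, using the non-supercharged hypothesis~\eqref{non supercharged} to control the sign and keep $\Omega^2$ from degenerating) holds throughout, and one can use $r$ or the ratio $r(u,v_2)/r(u_0,v_2)$ as a good coordinate/clock. The heart of the matter is to propagate a lower bound on the ``mass content'' ratio $\eta(u):=\frac{m(u,v_2)-m(u,v_1)}{r(u,v_2)}$ as $u$ increases from $u_0$ to $u_*$. Differentiating $\eta$ in $u$ and using the mass and $r$ evolution equations, one gets a differential inequality of the shape $\frac{d\eta}{d(\text{clock})}\ge (\text{positive scalar-field flux}) - (\text{loss terms}) - (\text{electromagnetic correction})$. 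Integrating from $u_0$ to $u_*$, the scalar-field flux term is bounded below using the incoming null energy along $\underline C$ (this produces the $g_\omega(\delta_0)$ and the $\delta_0^{1-\omega/2}$ terms, exactly as in Theorem~\ref{thm1.1} with the free parameter $\omega$ playing the role Christodoulou's exponent did), while the electromagnetic loss is bounded by a multiple of $\epsilon/\omega$ (this is the $13\epsilon/\omega$ term in the hypothesis). If $\eta_0$ exceeds the stated maximum, then at $u=u_*$ one finds $2m/r\ge 1$ somewhere in $[v_1,v_2]$, i.e.\ $\partial_v r\le 0$, contradicting the no-trapped-surface assumption; hence a trapped surface (with $\partial_u r<0$ coming for free from the Raychaudhuri equation and the initial data) must already have formed.

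Concretely, the ordered steps I would carry out are: (1) derive the structure equations and the $Q$-evolution equation; (2) prove the charge-confinement estimate $\sup_{\mathcal R} Q^2/r^2 \le \epsilon + O(\omega)$ using~\eqref{first assumption on v2-v1}--\eqref{second assumption on v2-v1}, together with bootstrap bounds on $r$, $\Omega^2$, and $\|r|\phi|^2\|$ over the thin slab; (3) establish the basic monotonicity: $\partial_v(2m/r)\ge 0$-type and $\partial_u r<0$-type inequalities modulo the controlled $\epsilon$ terms; (4) write the ODE for $\eta(u)$ in the chosen clock variable, isolate the scalar-field gain, the geometric loss, and the electromagnetic loss; (5) integrate from $u_0$ to $u_*$, optimizing/using the free exponent $\omega$ to recover the $g_\omega$ and $\delta_0^{1-\omega/2}$ expressions and the $13\epsilon/\omega$ bound; (6) conclude by contradiction that $\partial_v r\le 0$ somewhere, i.e.\ a trapped surface or MOTS forms, and upgrade to a genuine trapped surface. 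I expect the main obstacle to be step (2) combined with step (4): controlling the back-reaction of the charge on the geometry uniformly over $[u_0,u_*]\times[v_1,v_2]$ while simultaneously keeping the scalar-field flux estimate sharp enough that only an $O(\epsilon/\omega)$ loss appears. Getting a clean, closed bootstrap here — rather than an estimate that degenerates as one approaches $u_*$ where $r$ becomes comparable to $\delta_0 r(u_0,v_2)$ — is the delicate part, and it is precisely why the hypotheses are stated as smallness conditions on $v_2-v_1$ relative to $\epsilon$, $L$, and $\mathfrak e$ rather than as a single clean inequality.
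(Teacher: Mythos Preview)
Your overall architecture (contradiction argument, ODE for $\eta$ in the variable $x=r_2(u)/r_2(u_0)$, integrating-factor solution, bootstrap) matches the paper, but the mechanism you propose for handling the charge is not the one that makes the argument close, and this is a genuine gap rather than a matter of detail.

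You plan to show a \emph{constant} confinement $\sup_{\mathcal R}Q^2/r^2\le \epsilon+O(\omega)$ and then treat the electromagnetic contribution as an additive loss in the ODE which, after integration, becomes the $13\epsilon/\omega$ in the hypothesis. That is not how the $13\epsilon/\omega$ arises, and a constant bound on $Q^2/r^2$ does not reproduce the stated conclusion. The paper instead proves the \emph{$\eta$-dependent} bound $Q^2/r^2\le \tfrac{\omega}{4}\eta+2\epsilon$ (Proposition~\ref{A priori estimate on Q}), and then runs a second bootstrap in $u$ on the condition $\eta\ge 8\epsilon/\omega$, under which this improves to $Q^2/r^2\le \tfrac{\omega}{2}\eta$ (Lemma~\ref{bound for charge}). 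This is the crucial point: the $Q_2^2/(xr_2^2)$ term in $d\eta/dx$ is then absorbed into the $-\eta/x$ drift, changing the coefficient from $1$ to $1-\tfrac{\omega}{2}$ and producing the $\delta_0^{1-\omega/2}$ and $g_\omega(\delta_0)$ expressions after integration. The number $13\epsilon/\omega$ is \emph{not} an integrated electromagnetic loss at all; it is the threshold on $\eta_0$ needed to keep $\eta\ge 12\epsilon/\omega>8\epsilon/\omega$ throughout $[u_0,u_*]$, so that Lemma~\ref{bound for charge}, Lemma~\ref{keylemma1} and Lemma~\ref{keylemma2} remain applicable (this is exactly the bootstrap in Lemma~\ref{theoremlemma}). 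With your additive-constant strategy the charge term would instead contribute roughly $\epsilon\int_{x_*}^1 e^{-G(s)}s^{-1}\,ds$, which neither scales like $\epsilon/\omega$ nor produces the exponent shift $1\to 1-\tfrac{\omega}{2}$.

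Two smaller misidentifications also deserve correction. First, the non-supercharged hypothesis~\eqref{non supercharged} is not used to ``keep $\Omega^2$ from degenerating''; its role is to guarantee $m_1/r_1\ge Q_1^2/r_1^2$ along $\underline C$ (Remark~\ref{nonsupercharged}), which together with the $\eta$-dependent charge bound gives $2m/r\ge Q^2/r^2$ in the slab and hence $\partial_u\partial_v r\le 0$ via~\eqref{EMS1mass}. This sign is what drives the geometric bootstrap $\delta(u)\le\tfrac12$ (Proposition~\ref{mixed derivatives of r}). Second, the charged analogues of the two key lemmas (bounding $\Theta^2:=(r_2|D_u\phi_2|-r_1|D_u\phi_1|)^2$ via the wave equation, and $\Omega_2^{-2}\partial_vr_2/\Omega_1^{-2}\partial_vr_1\le e^{-(1-\omega/2)\eta}$) both require the bound $Q^2/r^2\le \tfrac{\omega}{2}\eta$ internally, so they too sit inside the $\eta\ge 8\epsilon/\omega$ bootstrap rather than being available unconditionally.
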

\begin{remark}
By comparing the order of the lower bounds (when {\color{black}$0<\delta_0\ll 1$}) of $\eta_0$ in the hypothesis of the theorem, we can interprete the theorem as: If $\eta_0\gtrsim\delta_0^{1-\frac{\omega}{2}}+\frac{13\epsilon}{\omega}$, a trapped surface {\color{black}forms in $\mathcal{R}$}. {\color{black}Since $\o$ could be chosen to be arbitrary small number in $(0,\f32)$, if we require that $\e$ (upper bound of $\frac{Q^2}{r^2}$ on $C\cup\underline{C}$) is small and satisfies $\f{13\e}{\o}\leq \d_0^{1-\f{\o}{2}}$, our theorem is also an \underline{almost-scale-critical} result.} 
\end{remark}

\begin{remark}
{\color{black}
Moreover, although we use the symbol $\epsilon$ to denote the upper bound of $\frac{Q^2}{r^2}$ on $C\cup\underline{C}$,  $\e$ is not necessarily to be small. In particular, we could choose $0<\d_0\ll1, \o=\f12$ and require $\e$ to be of size $1$. This is \underline{not} in the perturbative regime of Christodoulou's result for Einstein-scalar field.} Intuitively, for this case our Theorem \ref{thm1.3} is saying: \textit{if the incoming mass contained between $v_1$ and $v_2$ is large enough to overcome the initial charge on $C\cup\underbar{C}$, then we can guarantee the formation of a trapped surface}.
\end{remark}

{\color{black}
\begin{remark}\label{nonsupercharged} For initial data along $v=v_1$, we require that the initial data {\color{black}are not} super-charged, i.e. 
\begin{equation}\label{m Q 1}
m(u, v_1)\geq |Q|(u, v_1).
\end{equation} 
{\color{black}It is natural to consider initial data, which are not-super-charged, otherwise there could be non-physical super-charged naked singularity prescribed along $v=v_1$. At the same time, (\ref{m Q 1}) also implies an important inequality used in the proof of Proposition $\ref{mixed derivatives of r}$. 
\begin{lemma}
Along $v=v_1$, condition (\ref{m Q 1}) implies 
\begin{equation}\label{m Q 2}
\f{m}{r}(u,v_1)\geq \f{Q^2}{r^2}(u,v_1).
\end{equation}
\end{lemma}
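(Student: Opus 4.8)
The plan is to reduce the claimed inequality $\frac{m}{r}(u,v_1)\ge\frac{Q^2}{r^2}(u,v_1)$ to the non-supercharged hypothesis (\ref{m Q 1}), namely $m(u,v_1)\ge|Q|(u,v_1)$, together with the a priori bound $\epsilon:=\sup_{C\cup\underline{C}}\frac{Q^2}{r^2}<1$ that is built into the statement of Theorem \ref{thm1.3}. First I would restrict this last bound to the incoming cone $\underline{C}=\{v=v_1\}\subseteq C\cup\underline{C}$, which yields $\frac{Q^2}{r^2}(u,v_1)\le\epsilon<1$, hence, since $r>0$, the pointwise inequality $|Q|(u,v_1)<r(u,v_1)$, i.e. $\frac{|Q|}{r}(u,v_1)<1$.

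The second step is a one-line estimate: writing $\frac{Q^2}{r^2}=\frac{|Q|}{r}\cdot\frac{|Q|}{r}$ along $v=v_1$ and using $\frac{|Q|}{r}(u,v_1)<1$ to discard one factor, I get $\frac{Q^2}{r^2}(u,v_1)\le\frac{|Q|}{r}(u,v_1)$; the hypothesis $m(u,v_1)\ge|Q|(u,v_1)$ then replaces the numerator on the right by $m(u,v_1)$, giving $\frac{Q^2}{r^2}(u,v_1)\le\frac{m}{r}(u,v_1)$, which is exactly (\ref{m Q 2}). I do not expect any substantive obstacle here; the only point worth emphasizing is that $\epsilon<1$ is a standing hypothesis on the initial data of the main theorem (rather than something one must propagate into the interior of $\mathcal{R}$), so invoking it on the initial cone $\underline{C}$ is legitimate — and it is genuinely needed, since without the bound $|Q|\le r$ the conclusion can fail, as one sees by taking $|Q|=m>r$ at a point.
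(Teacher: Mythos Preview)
Your argument is correct. The final chain $\frac{Q^2}{r^2}=\frac{|Q|}{r}\cdot\frac{|Q|}{r}\le\frac{|Q|}{r}\le\frac{m}{r}$ is exactly what the paper uses as well; the only difference is how you justify the intermediate bound $\frac{|Q|}{r}(u,v_1)<1$. You invoke the standing hypothesis $\epsilon=\sup_{C\cup\underline{C}}\frac{Q^2}{r^2}<1$ of Theorem~\ref{thm1.3} and restrict it to $\underline{C}$. The paper instead uses that there is no MOTS or trapped surface along $v=v_1$, so $\frac{2m}{r}(u,v_1)\le 1$, and then the non-supercharged condition gives $\frac{|Q|}{r}\le\frac{m}{r}\le\frac{1}{2}$. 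Your route is marginally more direct and appeals to an explicitly stated hypothesis; the paper's route has the mild advantage of being self-contained (it only uses the non-supercharged condition plus the untrapped nature of the initial cone, not the separate quantitative assumption $\epsilon<1$). Either way the argument is a two-line computation once $\frac{|Q|}{r}<1$ is in hand.
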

}

\begin{proof} 
Since there is no MOTS or trapped surface along $v=v_1$, we have 
$$\f{2m}{r}(u,v_1)\leq 1, \mbox{ which gives } \f{m}{r}(u,v_1)\leq \f12.$$
Together with the non-super charged condition, we also have 
\begin{equation}\label{Q m}
\f{|Q|}{r}(u,v_1)\leq \f{m}{r}(u,v_1)\leq \f12.
\end{equation}
Then, we have
\begin{equation*}
\begin{split}
\f{m}{r}(u,v_1)-\f{Q^2}{r^2}(u,v_1)\geq& \f{|Q|}{r}(u,v_1)-\f{Q^2}{r^2}(u,v_1)\\
\geq& \f{|Q|}{r}(1-\f{|Q|}{r})(u,v_1)\geq 0.
\end{split}
\end{equation*}
{\color{black}
For the last inequality, we used (\ref{Q m})}.
\end{proof} 

{\color{black}The inequality \eqref{m Q 2}
is crucial in proving Proposition \ref{mixed derivatives of r}. And all subsequent results in Section \ref{main section} depend on Proposition \ref{mixed derivatives of r}.}
\end{remark}

\section{Preliminaries and Set-up}\label{preliminary}
To study the problem of trapped surface formation, we need to choose a convenient coordinate system {\color{black}in which} to express the {\color{black}Einstein field equations}. We describe the double null coordinate system for $\textit{spherically symmetric spacetimes}$ in {\color{black}what} follows.\\

\begin{definition}
	A spacetime $(\mathcal{M},g)$ is called \textit{spherically symmetric} if $SO(3)$ acts on it by isometry, and the orbits of the group are (topological) 2-dimensional spheres $S$. We define the area-radius coordinate $r(S)$ such that $A = 4\pi r^2$, where $A$ is the area of the $S$ determined by the induced metric $g|_S$.\footnote{This definition for $r$ implies that $g|_S = r^2(d\theta^2+sin^2\theta d\phi^2)$}.
\end{definition}

Under the assumption of spherical symmetry, a spacetime can be represented by {\color{black}a} two-dimensional diagram by considering only the quotient $\mathcal{M}/S$. Hence, a point on such diagram represents a 2-sphere in spacetime. There is no loss in generality by assuming that the outgoing ($v$ coordinate) and incoming ($u$ coordinate) null geodesics make 45-degree angles with the horizontal and vertical axes. Furthermore, it is possible to bring the points at infinity to a finite region through a conformal transformation, so that we can visualize the entire spacetime in a finite region. Such a representation of {\color{black}a} spacetime is called a \textit{Penrose diagram}.\\
{\color{black}
We now introduce the setup of the coordinate system, along with important points of interest on the Penrose diagram.
\begin{center}
	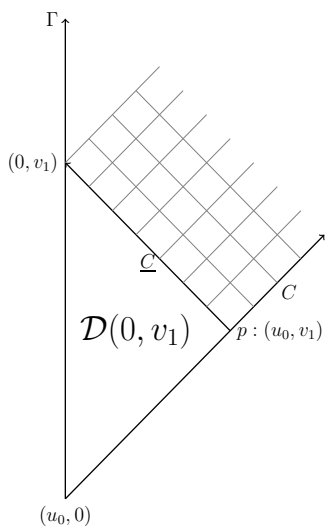
\begin{figure}
		\resizebox{4.5cm}{7cm}{
			\begin{tikzpicture}
			\node[] at (1.5,3.5) {\LARGE $\mathcal{D}(0,v_1)$};
			\begin{scope}[thick]
			\draw[->] (0,0) node[anchor=north]{$(u_0,0)$} -- (0,10)node[anchor = east]{$\Gamma$};
			\draw[->] (0,0) --node[anchor = north][label={[label distance=2cm]40:$C$}]{} (5.5,5.5);
			\draw[->] (3.5,3.5) node[anchor=west]{$p: (u_0,v_1)$} --node[anchor=north]{$\underline{C}$} (0,7)node[anchor = east]{$(0,v_1)$};
			\end{scope}
			\begin{scope}[gray]
			\draw (4,4) -- (0.5,7.5);
			\draw (4.5,4.5) -- (1,8);
			\draw (5,5) -- (1.5,8.5);
			\draw(3,4) -- (5,6);
			\draw(2.5,4.5) -- (4.5,6.5);
			\draw(2,5) -- (4,7);
			\draw(1.5,5.5) -- (3.5,7.5);
			\draw(1,6) -- (3,8);
			\draw(0.5,6.5) -- (2.5,8.5);
			\draw(0,7) -- (2,9);
			\end{scope}
			\end{tikzpicture}
		}\caption{Illustration of double null coordinate patch} 
	\label{fig: doublenull}
		
	\end{figure} 
\end{center}
\begin{enumerate}
	\item Let $\Gamma$ denote the axis of symmetry of the spacetime.
	\item Fix a point $p$ on the penrose diagram. Label the incoming null geodesic intersecting $p$ by $\underline{C}$, and the outgoing null geodesic intersecting $p$ by $C$. On the actual spacetime, $C$ and $\underline{C}$ are therefore null hypersurfaces.
	\item Parametrize $\underline{C}$ with the variable $u$, and $C$ by the variable $v$. At the intersection of $\Gamma$ and $\underline{C}$, set $u = 0$. Extend $C$ backwards until it intersects $\Gamma$. At the intersection of $\Gamma$ and $C$, we similarly set $v = 0$. Fixing these values determines the coordinate of $p$, which we call $(u_0,v)$.
	\item In the domain of dependence of $C\cup\underline{C}$, we can now establish a coordinate system: through every point in the domain of dependence runs an incoming and outgoing null geodesic emanating from $C$ and $\underline{C}$ respectively. Using the parameters $u$ and $v$ defined on $\underline{C}$ and $C$ gives us a coordinate for the point in question.
	\item Finally, let $\mathcal{D}(0,v_1)$ denote the region in spacetime bounded by $C$, $\underline{C}$ and $\Gamma$.
\end{enumerate}}
The construction above is illustrated in Figure \ref{fig: doublenull}. With respect to the double null coordinate system, the spherically symmetric metric can be expressed as 

\begin{align}
g=-\Omega^2(u,v)dudv + r^2(u,v)d\theta^2 +r^2(u,v)\sin^2\theta d\phi^2. \label{metric}
\end{align}

We now define several useful geometric quantities: 
\begin{definition}
	The \textit{Hawking mass} $m(u,v)$ contained inside a sphere $S(u,v)$ is defined to be the quantity {\color{black}$\frac{r}{2}(1+4\Omega^{-2}\partial_u r \partial_v r)$}. 
\end{definition}	

\begin{definition}
We define the charge $Q(u,v)$ contained in a sphere $S(u,v)$ to be
\begin{align}
Q:=2r^2\Omega^{-2}F_{uv}.\label{chargeeqn}
\end{align}
\end{definition}

\begin{gather}
\mbox{Note: } F_{uv}=\textcolor{black}{\partial_uA_v}-\partial_vA_u, \text{where $A$ is the electromagnetic potential.}\label{maxwelleqn}
\end{gather}
{\color{black}
	Due to gauge freedom in the electromagnetic potential, we can impose the condition $A_v\equiv 0$. Hence the above definition becomes:
	\begin{gather}
	F_{uv} = -\partial_vA_u\nonumber.
	\end{gather}
}
By substituting the expression {\color{black}$\eqref{metric}$,  $\eqref{chargeeqn}$ and $\eqref{maxwelleqn}$} into the Einstein field equations and Maxwell equations, we arrive at the following system of equations with dynamical real-valued unknowns $r, A_u$ and $\Omega^2$, and {\color{black}complex-valued unknown} $\phi$. For a more comprehensive explanation of these 
variables, we refer to \cite{Ko}, from which the following {\color{black}Einstein-Maxwell-charged scalar field system} has been obtained.

\begin{gather}
r\partial_v\partial_ur+\partial_vr\partial_ur=-\frac{\Omega^2}{4}\bigg(1-\frac{Q^2}{r^2}\bigg)\label{EMS1},\\
r^2\partial_u\partial_v\log\Omega = -2\pi r^2\big(D_u\phi(\partial_v\phi)^\dag+\partial_v\phi(D_u\phi)^\dag\big)-\frac{1}{2}\Omega^2\frac{Q^2}{r^2}+\frac{1}{4}\Omega^2+\partial_u\partial_v r\label{EMS2},\\
\partial_u(\Omega^{-2}\partial_ur) = -4\pi r\Omega^{-2}D_u\phi(D_u\phi)^\dag\label{EMS3},\\
\partial_v(\Omega^{-2}\partial_vr) = -4\pi r\Omega^{-2}\partial_v\phi(\partial_v\phi)^\dag\label{EMS4},\\
r\partial_u\partial_v\phi+\partial_ur\partial_v\phi+\partial_vr\partial_u\phi+\mathfrak{e}i\Psi(A)=0\label{EMS5},\\
\Psi(A) = A_u\partial_v(r\phi)-\frac{\Omega^2}{4}\frac{Q}{r}\phi\label{EMS6},\\
Q=-2r^2\Omega^{-2}\partial_vA_u\label{EMS7},\\
\partial_uQ=2\pi\mathfrak{e}ir^2\big(\phi(D_u\phi)^\dag-\phi^\dag D_u\phi\big)=4\pi\mathfrak{e}r^2Im\big(\phi^\dag D_u\phi\big)\label{EMS8},\\
\partial_vQ=2\pi\mathfrak{e}ir^2\big(\phi(\partial_v\phi)^\dag-\phi^\dag \partial_v\phi\big)=4\pi\mathfrak{e}r^2Im\big(\phi^\dag\partial_v\phi\big)\label{EMS9},
\end{gather}
where $D_u:= \partial_u+i\mathfrak{e}A_u$, and $\mathfrak{e}$ is the coupling constant between the scalar and electromagnetic field. It is worth noting that to reduce the above system into that of an uncharged scalar field, it suffices to set $\mathfrak{e} = 0$. Also, we can combine $\eqref{EMS5}$, $\eqref{EMS6}$ and $\eqref{EMS7}$, which gives us:

\begin{align}
&r\partial_u\partial_v\phi+\partial_ur\partial_v\phi+\partial_vr\partial_u\phi+\mathfrak{e}iA_u\partial_v(r\phi)-\mathfrak{e}i\frac{\Omega^2}{4}\frac{Q}{r}\phi=0\nonumber\\
&\implies \partial_v(r\partial_u\phi) + \partial_ur\partial_v\phi+\mathfrak{e}i\bigg(A_u\partial_v(r\phi)+r\phi\partial_vA_u\bigg)=-\mathfrak{e}i\frac{Q\phi\Omega^2}{4r}\nonumber\\
&\implies\partial_v(r\partial_u\phi)+\partial_ur\partial_v\phi+\mathfrak{e}i\partial_v(r\phi A_u)\nonumber\\&=\partial_v(rD_u\phi)+\partial_v\phi\partial_ur=-\mathfrak{e}i\frac{Q\phi\Omega^2}{4r}.\label{complex wave equation}
\end{align}

The above system \eqref{EMS1}-\eqref{complex wave equation} is subject to initial conditions. There are two types of initial conditions to be considered: 
\begin{enumerate}
	\item The first type of initial conditions are derived from geometrical considerations and are independent of the physical scenario. On the center of symmetry $\Gamma$, we must have $r=0$. In addition, by the spherical symmetry assumption, as we consider points infinitesimally close to the center, its incoming null geodesics essentially become outgoing (in the opposite direction). Hence we require that $\partial_vr(u_0,0) =-\partial_ur(u_0,0)$. The evolution of $r$ in the spacetime is then determined by equations $\eqref{EMS1}, \eqref{EMS3}, \text{ and } \eqref{EMS4}$.\\
	
	\textcolor{black}{On $C\cup\underline{C}$, we set $\Omega^2 = 1$.} This amounts to fixing a normalization for the coordinate system. The evolution of $\Omega^2$ in the coordinate patch $[u_0,0]\times[v_1,\infty)$ is then given by equation $\eqref{EMS2}$.\\
	\item The second type of initial conditions are those derived from quantities such as the scalar field $\phi$ and electromagnetic potential $A_u$. We can prescribe initial data of $\phi$ {\color{black}freely} on $C\cup\underline{C}$, which will completely determine its first derivatives as $C$ and $\underline{C}$ are characteristic hypersurfaces.\\  
	
	The electromagnetic potential $A_u$ along outgoing null hypersurfaces can be determined through equation $\eqref{EMS7}$ up to an arbitrary constant, which is in turn determined by $\eqref{EMS9}$. For completeness, it is worth mentioning that there is no loss in generality in letting $A_u = 0$ along $\Gamma$ \textcolor{black}{due to gauge freedom}, although we will not make use of this fact.\\
\end{enumerate}

Using the above system of equations, we can compute the derivatives of the Hawking mass:
\begin{align}
\partial_um&= \partial_u\bigg(\frac{r}{2}(1+4\Omega^{-2}\partial_ur\partial_vr)\bigg)\nonumber\\
&=\frac{\partial_ur}{2}+2\partial_ur\Omega^{-2}\partial_ur\partial_vr+2r\partial_u(\Omega^{-2}\partial_ur)\partial_vr+2r\Omega^{-2}\partial_ur\partial_u\partial_vr\nonumber\\
&=\frac{\partial_ur}{2}+2\partial_ur\Omega^{-2}\partial_ur\partial_vr-8\pi r^2\Omega^{-2}\partial_vr|\partial_u\phi|^2\nonumber\\
&\hspace{0.5cm}+2\Omega^{-2}\partial_ur\bigg(-\frac{\Omega^2}{4}\big(1-\frac{Q^2}{r^2}\big)-\partial_vr\partial_ur\bigg)\nonumber\\
&=-8\pi r^2\Omega^{-2}\partial_vr|D_u\phi|^2+\frac{Q^2\partial_ur}{2r^2}\label{massu},\\
\partial_vm &= \partial_v\bigg(\frac{r}{2}(1+4\Omega^{-2}\partial_ur\partial_vr)\bigg)\nonumber\\
&=\frac{\partial_vr}{2}+2\partial_vr\Omega^{-2}\partial_ur\partial_vr+2r\partial_v(\Omega^{-2}\partial_vr)\partial_ur+2r\Omega^{-2}\partial_vr\partial_u\partial_vr\nonumber\\
&=\frac{\partial_ur}{2}+2\partial_ur\Omega^{-2}\partial_ur\partial_vr-8\pi r^2\Omega^{-2}\partial_ur|\partial_v\phi|^2\nonumber\\
&\hspace{0.5cm}+2\Omega^{-2}\partial_vr\bigg(-\frac{\Omega^2}{4}\big(1-\frac{Q^2}{r^2}\big)-\partial_vr\partial_ur\bigg)\nonumber\\
&=-8\pi r^2\Omega^{-2}\partial_ur|\partial_v\phi|^2+\frac{Q^2\partial_vr}{2r^2}\label{massv}.
\end{align}

Finally, we define what is meant by a trapped surface.
\begin{definition}
	A \textit{trapped surface} $S$ in a spherically symmetric spacetime is a point $(u,v)$ on the Penrose diagram (which represents a sphere) such that $\partial_ur(u,v)<0$ and $\partial_vr(u,v)<0$. If $\partial_vr(u,v) = 0$, we call $(u,v)$ a \textit{marginally outer trapped surface (MOTS)}.
\end{definition}

In {\color{black}the following}, we will only focus on a narrow strip of the double null coordinate patch $[u_0,0]\times [v_1,v_2]$, for some $v_2>v_1$. We are going to give conditions under which trapped surface formation is guaranteed in this strip. We {\color{black}introduce:} 

\begin{gather}
r_i(u) := r(u,v_i), \hspace{0.5cm} m_i(u):=m(u,v_i),\hspace{0.5cm} i = 1,2\nonumber\\
\delta(u):=\frac{r_2(u)}{r_1(u)}-1,\hspace{0.5cm} \delta_0 := \delta(u_0)\nonumber\\
\eta(u):=\frac{2(m_2(u)-m_1(u))}{r_2(u)},\hspace{0.5cm} \eta_0 := \eta(u_0)\nonumber\\
x(u):=\frac{r_2(u)}{r_2(u_0)}\label{quantities}
\end{gather}
See Figure \ref{doublenull2} for an illustration. Henceforth, any dynamical quantity (except for $u$ and $v$) with the subscript $\{1,2\}$ shall be treated as a function of $u$ with $v = v_i, i = \{1,2\}$ fixed. Furthermore, denote {\color{black} the region $[u_0,0]\times[v_1,v_2]$ by $\mathcal{R}$.}\\
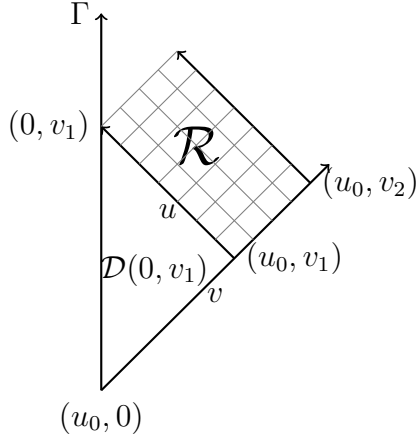
\begin{figure}\label{doublenull2}
	\centering
	\begin{tikzpicture}
	\node[] at (1.25,3.25) {\LARGE $\mathcal{R}$};
	\node[] at (0.7,1.6) { $\mathcal{D}(0,v_1)$};
	\begin{scope}[thick]
	\draw[->] (0,0) node[anchor=north]{$(u_0,0)$} -- (0,5)node[anchor = east]{$\Gamma$};
	\draw[->] (0,0) --node[anchor = north]{$v$} (3,3);
	\draw[->] (1.75,1.75) node[anchor=west]{$(u_0,v_1)$} --node[anchor=north]{$u$} (0,3.5)node[anchor = east]{$(0,v_1)$};
	\draw[->] (2.75,2.75) node[anchor=west]{$(u_0,v_2)$} -- (1,4.5);
	\end{scope}
	\begin{scope}[gray]
	\draw (2,2) -- (0.25,3.75);
	\draw (2.25,2.25) -- (0.5,4);
	\draw (2.5,2.5) -- (0.75,4.25);
	\draw(1.5,2) -- (2.5,3);
	\draw(1.25,2.25) -- (2.25,3.25);
	\draw(1,2.5) -- (2,3.5);
	\draw(0.75,2.75) -- (1.75,3.75);
	\draw(0.5,3) -- (1.5,4);
	\draw(0.25,3.25) -- (1.25,4.25);
	\draw(0,3.5) -- (1,4.5);
	\end{scope}
	\end{tikzpicture}
	
	\caption{Problem setup on Penrose Diagram}
	\label{doublenull2}
\end{figure}

\section{A Trapped Surface formation criterion for the Complex Scalar Field}\label{main section}

\subsection{Outline} {\color{black}Before giving the complete proof, we briefly describe the main ideas. 
\begin{enumerate}
\item First, we prove that $r(u,v)$ is decreasing with respect to $u$ in Lemma \ref{negativeincomingcharged}, hence the dimensionless length scale $x(u):=\frac{r_2(u)}{r_2(u_0)}$ decreases as $u$ increases, and $x(u_0) = 1$. 

\item Then we employ a proof-by-contradiction argument: assuming that $\mathcal{D}(0,v_1)\cup\mathcal{R}$ does not have a trapped surface, we derive an inequality for $\eta$ in terms of $x$ in the region $[u_0,u_*]\times[v_1,v_2]$. We further show that $\frac{d\eta}{dx}$ is bounded from above, i.e. $\frac{d\eta}{du}$ is bounded from below, and therefore we get a lower bound on $\eta(u_*)$. 

\noindent If this lower bound is greater than $1$, i.e., $\eta(u_*)= \frac{2(m_2-m_1)}{r_2}(u_*)>1$, it implies $\frac{2m_2}{r_2}(u_*)>1$ and this means $S(u_*,v_2)$ \text{ is a trapped surface.} Since $(u_*,v_2)$ is a point in $\mathcal{R}$, the above gives us the desired contradiction. 
\end{enumerate}

The key of above arguments is to bound $\frac{d\eta}{du}$. A direct computation gives:
\begin{align}
\frac{d\eta}{dx}&=-\frac{\eta}{x}-\frac{16\pi\partial_vr_2\Omega_2^{-2}}{x\partial_ur_2}\bigg(r_2^2|D_u\phi_2|^2-\frac{\Omega_1^{-2}\partial_vr_1}{\Omega_2^{-2}\partial_vr_2}r_1^2|D_u\phi_1|^2\bigg)+\frac{Q_2^2}{xr_2^2}.\label{1.10intro}
\end{align}
We show  in Lemma \ref{bound for charge} that the \underline{non}-supercharged assumption along $v=v_1$ allows us to show that ${Q_2^2}/{r_2^2}$ remains bounded by $\eta$, plus a small error term. If $\eta$ is large enough compared to ${Q_2^2}/{r_2^2}$, then the error can be absorbed into $\eta$.

\noindent With the control of ${Q_2^2}/{r_2^2}$ in terms of $\eta$, we further have
\begin{gather}
	\Theta^2:=\big(r_2|D_u\phi_2|-r_1|D_u\phi_1\big)^2\lesssim \frac{-\partial_ur_2}{8\pi\Omega_2^{-2}\partial_vr_2}(m_2-m_1)\bigg(\frac{1}{r_1}-\frac{1}{r_2}\bigg)\label{keylemma1intro}\\
	\text{and }
 \frac{\Omega_2^{-2}\partial_vr_2}{\Omega_1^{-2}\partial_vr_1}(u)\lesssim e^{-\eta(u)}\label{keylemma2intro}.
\end{gather}
We can substitute $\eqref{keylemma1intro}$ and $\eqref{keylemma2intro}$ into $\eqref{1.10intro}$, to obtain
\begin{align}
\frac{d\eta}{dx}\lesssim -\frac{\eta}{x}\bigg(1-\frac{\delta_0}{x(1+\delta_0)-\delta_0}\bigg)+\frac{1}{x}\frac{\delta_0}{x(1+\delta_0)-\delta_0}.\label{diffineq}
\end{align}
Integrating this will give us a lower bound on $\eta(u)$ for $u\in[u_0,u_*]$. This ensures that for $u\in[u_0,u'], \eta(u)$ is always large enough compared to $\frac{Q_2^2}{r_2^2}$, so that the differential inequality is always valid in a neighbourhood of $u'$, and hence the domain of validity of $\eqref{diffineq}$ can be extended to the whole $[u_0,u_*]$. Finally, the inequality also shows that $\eta(u_*)>1$, a contradiction to the no-trapped-surfaces assumption. Hence the initial assumption that $\mathcal{D}(0,v_1)\cup\mathcal{R}$ has no trapped surfaces cannot be true and this completes the argument.\\
}

\subsection{Proof of Theorem \ref{thm1.3}}
To begin the proof proper, we first give a (negative) upper bound for $\partial_ur$ in the region $[u_0,0]\times[v_1,\infty)$ as promised.  The following lemma is the analog of Lemma \ref{negativeincoming} in the uncharged case. This has two important consequences described in the remarks.\\
\begin{lemma}\label{negativeincomingcharged}
	{\color{black}$\partial_ur\leq -\frac{1-\epsilon}{2}\Omega^2$ everywhere in $\mathcal{D}(0,v_1)\cup\big([u_0,0]\times[v_1,\infty)\big)$.}
	\begin{proof}
		Rewrite $\eqref{EMS1}$ as
		\begin{align*}
		\partial_v\big(r\partial_ur\big)=-\frac{\Omega^2}{4}\bigg(1-\frac{Q^2}{r^2}\bigg).
		\end{align*}Applying the assumption that $\epsilon < 1$ {\color{black}and $\Omega^2=1$} on $C$, the following inequalities hold on $C$:
		\begin{align*}
		-\frac{1}{4}\leq\partial_v(r\partial_ur)(u_0,v)\leq-\frac{1}{4}+\frac{\epsilon}{4}.
		\end{align*}
		Integrating both sides and dividing by $r$:
		\begin{align}
		-\frac{v}{4r(u_0,v)}\leq \partial_ur(u_0,v)\leq -\frac{v(1-\epsilon)}{4r(u_0,v)}\label{estimate for nu}.
		\end{align}
		{\color{black}For} the first inequality of $\eqref{estimate for nu}$  at $v=0$, we get
		\begin{align}\label{boundoutgoing}
		-\frac{1}{4\partial_vr(u_0,0)}\leq\partial_ur(u_0,0) = -\partial_vr(u_0,0)\implies\partial_vr(u_0,0)\leq\frac{1}{2}.
		\end{align}
		Since $\Omega^2 = 1$ on $C$, $\eqref{EMS4}$ gives us $\partial_v\partial_vr\leq 0$, i.e. $r$ is concave with respect to {\color{black}$v$}. Combining this with the fact that $r(u_0,0) = v(u_0,0) = 0$, we have:
		\begin{align*}
		\frac{r}{v}(u_0,v)\leq\partial_vr(u_0,0).
		\end{align*}
		Substituting this into the second inequality of $\eqref{estimate for nu}$, followed by applying $\eqref{boundoutgoing}$, we get
		\begin{align*}
		\partial_ur(u_0,v)\leq - \frac{1-\epsilon}{4\partial_vr(u_0,0)}\leq -\frac{1-\epsilon}{2}.
		\end{align*}
		By $\eqref{EMS3}$, $\Omega^{-2}\partial_ur$ is decreasing along incoming null geodesics. Hence for a general point in {\color{black} $\mathcal{D}(0,v_1)\cup\big([u_0,0]\times[v_1,\infty)\big)$}, we have $\Omega^{-2}\partial_ur\leq-\frac{1-\epsilon}{2}$.
	\end{proof}
\end{lemma}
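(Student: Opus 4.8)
The plan is to establish the estimate first on the initial outgoing cone $C=\{u=u_0\}$, where the normalization $\Omega^2=1$ is available, and then transport it into the interior of $\mathcal{D}(0,v_1)\cup\big([u_0,0]\times[v_1,\infty)\big)$ using the monotonicity built into the Raychaudhuri-type equation \eqref{EMS3}.

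For the work on $C$, I would first recast \eqref{EMS1} in divergence form, $\partial_v(r\partial_ur)=-\frac{\Omega^2}{4}\big(1-\frac{Q^2}{r^2}\big)$. Since $\Omega^2=1$ and $0\le \frac{Q^2}{r^2}\le\epsilon<1$ on $C$, this gives $-\frac14\le\partial_v(r\partial_ur)(u_0,v)\le-\frac{1-\epsilon}{4}$; integrating in $v$ from the axis $\Gamma$, where $r=0$ so there is no boundary term, and dividing by $r$ yields $-\frac{v}{4r}\le\partial_ur(u_0,v)\le-\frac{(1-\epsilon)v}{4r}$ on $C$. The remaining task on $C$ is to turn the factor $v/r$ into a numerical constant. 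Letting $v\to0$ in the lower bound and using the axis regularity condition $\partial_ur(u_0,0)=-\partial_vr(u_0,0)$ gives $\partial_vr(u_0,0)\le\frac12$, while \eqref{EMS4} with $\Omega^2=1$ gives $\partial_v\partial_vr=-4\pi r|\partial_v\phi|^2\le0$, so $v\mapsto r(u_0,v)$ is concave and, vanishing at $v=0$, obeys $r(u_0,v)/v\le\partial_vr(u_0,0)\le\frac12$. Plugging $v/r\ge2$ into the upper bound gives $\partial_ur(u_0,v)\le-\frac{1-\epsilon}{2}=-\frac{1-\epsilon}{2}\Omega^2$ on all of $C$.

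To propagate the bound off of $C$, I would integrate \eqref{EMS3}: since $\partial_u(\Omega^{-2}\partial_ur)=-4\pi r\Omega^{-2}|D_u\phi|^2\le0$, the quantity $\Omega^{-2}\partial_ur$ is nonincreasing along each incoming null geodesic $\{v=\mathrm{const}\}$ as $u$ increases. Its value on $C$ is already at most $-\frac{1-\epsilon}{2}$, using $\Omega^2=1$ there, so $\Omega^{-2}\partial_ur\le-\frac{1-\epsilon}{2}$ throughout $\mathcal{D}(0,v_1)\cup\big([u_0,0]\times[v_1,\infty)\big)$, which is the assertion after multiplying by $\Omega^2$.

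The only step needing genuine care is the passage from the pointwise bound carrying the factor $v/r$ to a clean numerical constant; this is precisely where the concavity of $r$ along $C$ furnished by \eqref{EMS4} and the regularity condition at $\Gamma$ enter, and without them one would obtain only a bound that degenerates as $v\to0$. The charge contribution causes no trouble: $1-\frac{Q^2}{r^2}\ge1-\epsilon>0$ only reinforces the sign of the right-hand side of \eqref{EMS1}, and this is exactly where the hypothesis $\epsilon<1$ is used.
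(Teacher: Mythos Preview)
Your proposal is correct and follows essentially the same approach as the paper: rewrite \eqref{EMS1} as $\partial_v(r\partial_ur)=-\frac{\Omega^2}{4}(1-\frac{Q^2}{r^2})$, integrate along $C$ from the axis to obtain bounds on $\partial_ur$ with the factor $v/r$, use the axis condition and concavity from \eqref{EMS4} to convert $v/r\ge 2$, and finally propagate via the monotonicity of $\Omega^{-2}\partial_ur$ from \eqref{EMS3}. The organization and key ideas coincide with the paper's proof.
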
 

\begin{remark}
	Under the assumption of no trapped surfaces, $m(u,v)\geq 0$ for all {\color{black}$(u,v)\in \mathcal{D}(0,v_1)\cup\big([u_0,0]\times[v_1,\infty)\big)$}
\end{remark}
\begin{proof} 
	
	Given any point $(u,v)\in \mathcal{R}$, we can extend the outgoing null geodesic backwards until it intersects $\Gamma$ at some coordinate $(u,v_c)$, so that $r(u,v_c) = 0$. Using $\eqref{massv}$, we have
	\begin{align*}
	\partial_vm = -8\pi r^2\Omega^{-2}\partial_ur|\partial_v\phi|^2 + \frac{Q^2\partial_vr}{2r^2}.
	\end{align*}
	
	{\color{black}Since} $\partial_ur\leq 0$ by Lemma $\ref{negativeincomingcharged}$, and $\partial_vr> 0$ by the no trapped surface assumption, we get $\partial_vm\geq 0$. Combining with the fact that $m(u,v_c) = 0$, we obtain the desired result.
\end{proof}
\subsection{Estimates for $Q,r$}\label{sec:nothing}
In this section we will bound $Q$ in terms of the Hawking mass in $\mathcal{R}${\color{black}, and show that $\partial_u\partial_vr \leq 0$} under appropriate conditions. Obtaining a bound on $Q$ will require a bound on $\frac{r_2}{r_1}$, which in turn requires a bound on $Q$. Hence we will develop these bounds using a bootstrap argument.
\begin{proposition}\label{A priori estimate on Q} {\color{black}Fix $0<\o<\f23$. Choose $v_2-v_1$ sufficiently small satisfying \eqref{first assumption on v2-v1} and \eqref{second assumption on v2-v1}.} Let $v_1<v_a\leq v_2$. Assume that $\frac{r_2(u)}{r_1(u)}\leq\frac{3}{2}$, {\color{black}i.e., $\delta(u)\leq \frac12$ } for $u\in[u_0,0]$, and that {\color{black}$\mathcal{R}:=[u_0,0]\times[v_1,v_2]$} is free of trapped surfaces. Then the following inequality holds:
	\begin{align*}
	\frac{Q_a^2(u)}{r_a^2(u)}\leq\frac{\omega}{4}\eta_a(u)+{\color{black}\f{2Q_1^2(u)}{r^2_1(u)}},
	\end{align*}
where 
\begin{align*}
    {\color{black}\eta_a(u):=\frac{2\big(m_a(u)-m_1(u)\big)}{r_a}}.
\end{align*}
Over here the subscript $a$ indicates a quantity evaluated at the point  $(u,v_a)$.
\end{proposition}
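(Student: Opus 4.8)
The plan is a bootstrap (continuity) argument in the incoming variable $v_a$, seeded at $v_a=v_1$ where $Q^2/r^2\le\epsilon<1$ is known on $\underline C$. Let $v^\dagger\in(v_1,v_2]$ be the supremum of those $v_a$ for which the uniform bound $\frac{Q^2}{r^2}(u,v)\le 2\epsilon+\frac{\omega}{2}$ holds for all $u\in[u_0,0]$ and $v\in[v_1,v_a]$; since $\frac{Q_1^2}{r_1^2}\le\epsilon$ strictly and $Q^2/r^2$ is continuous, $v^\dagger>v_1$. I will show the claimed sharp inequality on $[u_0,0]\times[v_1,v^\dagger]$. Because the no--trapped--surface hypothesis together with Lemma~\ref{negativeincomingcharged} gives $m\ge 0$ and $m/r<\tfrac12$, hence $\eta_a<1$, the sharp bound reads $\frac{Q_a^2}{r_a^2}\le\frac{2Q_1^2}{r_1^2}+\frac{\omega}{4}\eta_a<2\epsilon+\frac{\omega}{2}$, which strictly improves the bootstrap assumption and therefore forces $v^\dagger=v_2$.

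The core estimate for $Q$ comes from \eqref{EMS9}. Writing $f:=|Q|/r$ and using no trapped surfaces (so $\partial_vr>0$ and $\partial_v(Q^2/r^2)=\frac{2Q\partial_vQ}{r^2}-\frac{2Q^2\partial_vr}{r^3}\le\frac{2Q\partial_vQ}{r^2}$) together with $|\partial_vQ|\le 4\pi\mathfrak{e}r^2|\phi||\partial_v\phi|$, one gets $\partial_vf\le 4\pi\mathfrak{e}\,(r^{1/2}|\phi|)(r^{1/2}|\partial_v\phi|)$ wherever $f>0$. The flux $\int_{v_1}^{v_a}r|\partial_v\phi|^2\,dv$ is controlled from \eqref{massv}: since $-\partial_ur\ge\frac{1-\epsilon}{2}\Omega^2$ by Lemma~\ref{negativeincomingcharged} and $Q^2\partial_vr\ge0$, one has $4\pi(1-\epsilon)\int_{v_1}^{v_a}r^2|\partial_v\phi|^2\,dv\le m_a-m_1=\frac{r_a\eta_a}{2}$, and then $r\ge r_1$ on the slice (again by $\partial_vr>0$) and $r_a/r_1\le\tfrac32$ give $\int_{v_1}^{v_a}r|\partial_v\phi|^2\,dv\le\frac{3\eta_a}{16\pi(1-\epsilon)}$. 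Integrating $\partial_vf$ from $v_1$ to $v_a$, applying Cauchy--Schwarz in $v$, and squaring (using $(a+b)^2\le2a^2+2b^2$, so that the boundary term produces exactly $\frac{2Q_1^2}{r_1^2}$) yields
\[
\frac{Q_a^2}{r_a^2}\;\le\;\frac{2Q_1^2}{r_1^2}\;+\;\frac{6\pi\,\mathfrak{e}^2\,(v_a-v_1)}{1-\epsilon}\Big(\sup_{\mathcal R}r|\phi|^2\Big)\,\eta_a .
\]

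It remains to bound $\sup_{\mathcal R}r|\phi|^2$ by propagating the datum $r|\phi|^2\le L$ off $\underline C$. The plan is to integrate a suitable form of the wave equation \eqref{complex wave equation} in $v$: the $r\partial_v\phi$ contribution is controlled by the flux estimate above, the $\partial_vr\cdot\phi$ contribution by the monotonicity of $\Omega^{-2}\partial_vr$ from \eqref{EMS4}, and the charge--coupling term $\mathfrak{e}Q\phi\Omega^2/(4r)$ by the running bootstrap bound on $Q^2/r^2$; all new terms carry a factor $(v_2-v_1)$, and hypothesis \eqref{first assumption on v2-v1} is exactly tailored so that they are absorbed, giving a bound of the form $\sup_{\mathcal R}r|\phi|^2\lesssim L+\mathfrak{e}(v_2-v_1)(\cdots)$ with the correction (in which $|\phi_1|$ and $r(u_0,v_2)$ enter) controlled by \eqref{first assumption on v2-v1}. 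Feeding this into the displayed inequality and invoking \eqref{second assumption on v2-v1} makes the coefficient of $\eta_a$ at most $\frac{\omega}{4}$, which is the sharp bound; since at the same time the resulting bound is $<2\epsilon+\frac\omega2$, the bootstrap closes and the proposition holds on all of $\mathcal R$.

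I expect the main obstacle to be the third step: propagating the pointwise smallness of $r|\phi|^2$ into $\mathcal R$ through \eqref{complex wave equation} when the charge term couples back to $Q^2/r^2$, so that the bootstrap must effectively be run for $Q$ and for $\phi$ simultaneously and every $(v_2-v_1)$--error tracked against \eqref{first assumption on v2-v1}--\eqref{second assumption on v2-v1} (and, if needed, against an auxiliary bound $\Omega^2\lesssim1$ on $\mathcal R$). By contrast, the differential inequality for $Q^2/r^2$ and the mass--flux bound from \eqref{massv} are routine.
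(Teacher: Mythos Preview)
Your main gap is in the third step. The wave equation \eqref{complex wave equation} reads $\partial_v(rD_u\phi)+\partial_v\phi\,\partial_ur=-\mathfrak{e}i\frac{Q\phi\Omega^2}{4r}$; integrating it in $v$ propagates $rD_u\phi$, not $r\phi$ or $r|\phi|^2$. The terms you list (``$\partial_vr\cdot\phi$'', ``$r\partial_v\phi$'', the charge coupling) do not arise together from any single identity in the system, so the outlined bound on $\sup_{\mathcal R}r|\phi|^2$ cannot be carried out as written. Relatedly, the bootstrap on $Q^2/r^2$ is set up solely to feed the charge--coupling term in that step; since the step itself is the wrong tool, the bootstrap serves no purpose.

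The paper's argument is direct and avoids both the wave equation and the bootstrap. One simply writes $Q_a=Q_1+\int_{v_1}^{v_a}\partial_vQ$, applies \eqref{EMS9} and Cauchy--Schwarz to obtain
\[
Q_a^2\le 2Q_1^2+32\pi^2\mathfrak{e}^2\Big(\int_{v_1}^{v_a}r^2|\phi|^2\,dv\Big)\Big(\int_{v_1}^{v_a}r^2|\partial_v\phi|^2\,dv\Big),
\]
and then bounds \emph{the integral} $\int r^2|\phi|^2\,dv$ rather than a pointwise supremum of $r|\phi|^2$. For that integral one writes $\phi(u,v)=\phi_1(u)+\int_{v_1}^{v}\partial_{v'}\phi\,dv'$ and uses Cauchy--Schwarz together with the same flux bound $\int r^2|\partial_v\phi|^2\le\frac{m_a-m_1}{4\pi(1-\epsilon)}$ you already obtained; the hypothesis $r_a/r_1\le\tfrac32$ and the no--trapped--surface bound $\eta_a<1$ then give constants matching \eqref{first assumption on v2-v1} exactly. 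No information on $Q^2/r^2$ in the interior is ever needed, because Lemma~\ref{negativeincomingcharged} only uses the initial bound $\epsilon$ on $C$. In short: drop the bootstrap and the wave equation, and control $\int r^2|\phi|^2\,dv$ by the fundamental theorem of calculus for $\phi$ in $v$.
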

\begin{proof}
{\color{black}We write $Q_a$ as the integral of its derivative:}
\begin{align}\label{chargeestimate1}
Q_a^2 &=\bigg(\int_{v_1}^{v_a}\partial_vQ\text{ }dv +Q_1\bigg)^2\leq 2\bigg(\int_{v_1}^{v_a}\partial_vQ\text{ }dv\bigg)^2+2Q_1^2\nonumber\\
&\leq 2\bigg(\int_{v_1}^{v_a}4\pi\mathfrak{e}|\phi||\partial_v\phi|r^2dv\bigg)^2+2Q_1^2,\text{ by applying \eqref{EMS9}}\nonumber\\
&\leq 32\pi^2\mathfrak{e}^2\int_{v_1}^{v_a}r^2|\phi|^2dv\cdot\int_{v_1}^{v_a}r^2|\partial_v\phi|^2dv+2Q_1^2.
\end{align}
{\color{black}The second integral in the previous line can be bounded:}
\begin{align}\label{chargeestimate2}
\int_{v_1}^{v_a}r^2|\partial_v\phi|^2dv=\int_{v_1}^{v_a}\frac{-8\pi r^2\Omega^{-2}\partial_ur|\partial_v\phi|^2}{-8\pi\Omega^{-2}\partial_ur}dv\nonumber\\
\leq\frac{1}{4\pi(1-\epsilon)}\int_{v_1}^{v_a}\partial_vm-\frac{Q^2\partial_vr}{2r^2}\text{ }dv\leq\frac{m_a-m_1}{4\pi(1-\epsilon)},
\end{align}
where we applied Lemma \ref{negativeincomingcharged} in the second last inequality to pull out the term {\color{black}$\Omega^{-2}\partial_ur$} in the denominator{\color{black}, and} used the assumption that $\partial_vr\geq 0$ for the last inequality. Next, we bound the first integral:\textcolor{black}{
\begin{align}\label{chargeestimate3}
\int_{v_1}^{v_a}r^2|\phi|^2dv
&=\int_{v_1}^{v_a}\bigg(r^2\bigg|\int_{v_1}^v\partial_{v'}\phi \text{ }dv'+\phi_1\bigg|^2\bigg)dv\nonumber\\
&\leq 2r_a^2\int_{v_1}^{v_a}\bigg[\bigg(\int_{v_1}^v\partial_{v'}\phi\text{ } dv'\bigg)^2+|\phi_1|^2\bigg]dv\nonumber\\
&\leq 2r_a^2\int_{v_1}^{v_a}\bigg[(v-v_1)\int_{v_1}^v|\partial_{v'}\phi|^2dv'+|\phi_1|^2 \bigg]dv\nonumber\\
&\leq 2r_a^2\int_{v_1}^{v_a}\bigg[(v-v_1)\int_{v_1}^v\bigg(\frac{-8\pi r^2\Omega^{-2}\partial_ur|\partial_v\phi|^2}{-8\pi r^2\Omega^{-2}\partial_ur}\bigg)dv'+|\phi_1|^2 \bigg]dv\nonumber\\
&\leq\frac{2r_a^2(v_a-v_1)}{r_1^2}\frac{1}{8\pi}\frac{2}{1-\epsilon}\int_{v_1}^{v_a}\int_{v_1}^v\partial_{v'}m\text{ }dv'dv+2r_a^2\int_{v_1}^{v_a}|\phi_1|^2dv,\nonumber \\
&\hspace{0.5cm}\text{ by Lemma }\ref{negativeincomingcharged}\nonumber\\
&\leq\frac{v_a-v_1}{2\pi(1-\epsilon)}\bigg(\frac{r_a}{r_1}\bigg)^2\int_{v_1}^{v_a}(m_a-m_1)dv+2r_a^2(v_a-v_1)|\phi_1|^2\nonumber\\
&\leq\frac{v_a-v_1}{2\pi(1-\epsilon)}\bigg(\frac{r_a}{r_1}\bigg)^2(v_a-v_1)(m_a-m_1)+2r_a^2(v_a-v_1)|\phi_1|^2.
\end{align}}
Substituting $\eqref{chargeestimate2}$ and $\eqref{chargeestimate3}$ back into $\eqref{chargeestimate1}$ and rearranging, we get
\begin{align*}
Q_a^2\leq \frac{4\mathfrak{e}^2(v_a-v_1)^2}{(1-\epsilon)^2}\bigg(\frac{r_a}{r_1}\bigg)^2(m_a-m_1)^2+\frac{16\pi\mathfrak{e}^2}{1-\epsilon}r_a^2(m_a-m_1)(v_a-v_1)|\phi_1|^2+2Q_1^2.
\end{align*}
Dividing both sides by $r_a^2$, we get\textcolor{black}{
\begin{align*}
\frac{Q_a^2}{r_a^2}&\leq\frac{\mathfrak{e}^2(v_a-v_1)^2}{(1-\epsilon)^2}\bigg(\frac{r_a}{r_1}\bigg)^2\eta_a^2+\frac{8\pi\mathfrak{e}^2(v_a-v_1)}{1-\epsilon}\bigg(\frac{r_a}{r_1}\bigg)(r_1|\phi_1|^2)\eta_a+2\frac{Q_1^2}{r_a^2}\\
&\leq\bigg(\frac{9\mathfrak{e}^2}{4(1-\epsilon)^2}(v_a-v_1)^2\eta_a+\frac{12\pi L\mathfrak{e}^2}{1-\epsilon}(v_a-v_1)\bigg)\eta_a+2\frac{Q_1^2}{r_1^2}, \hspace{0.5cm}\text{since }\frac{r_a}{r_1}\leq\frac{3}{2}\\
&\leq\bigg(\frac{9\mathfrak{e}^2}{4(1-\epsilon)^2}(v_a-v_1)^2+\frac{12\pi L\mathfrak{e}^2}{1-\epsilon}(v_a-v_1)\bigg)\eta_a+2{\color{black}\f{Q_1^2}{r_1^2}},
\end{align*}}
where in the last inequality we have used the fact that $\eta_a<1$. This is because the no trapped surface or MOTS assumption gives:
  \begin{align*}
     \eta_a\leq\frac{2(m_a-m_1)}{r_a}\leq\frac{2m_a}{r_a}< 1.
 \end{align*}
Hence by the assumption \eqref{first assumption on v2-v1} on $v_2-v_1$, we have $\frac{Q_a^2}{r_a^2}\leq \frac{\omega}{4}\eta_a+2\epsilon$.
\end{proof}
We wish to get rid of the $\epsilon$ term in the upper bound given by the previous proposition. This will be done in Lemma $\ref{bound for charge}$. For that, we will need the next proposition which is the equivalent of Proposition \ref{mixed derivatives of r real} in the uncharged case. {\color{black}This proposition is proven} using a bootstrap argument.
\begin{proposition}\label{mixed derivatives of r} 
	Assume {\color{black}the initial data along $\underline{C}$ is not super-charged}, and $\mathcal{R}$ is free of trapped surfaces. Then $\partial_u\partial_vr \leq 0$ in {\color{black}$[u_0,u_*]\times[v_1,v_2]$} and {\color{black} $\delta(u):=\frac{r_2}{r_1}-1\leq\frac{1}{2}$} for $u\in[u_0,u_*]${\color{black}, where $u_*$ is defined such that $x(u_*)=\frac{3\delta_0}{1+\delta_0}$.}
\end{proposition}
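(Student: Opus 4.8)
The plan is a continuity (bootstrap) argument in $u$, powered by a transport inequality in $v$ for the quantity $mr-Q^2$, whose sign detects $\partial_u\partial_v r$. First I would eliminate the product $\partial_u r\,\partial_v r$ from \eqref{EMS1} using the Hawking mass, which yields the identity $\partial_u\partial_v r=\frac{\Omega^2}{4r}\big(\frac{Q^2}{r^2}-\frac{2m}{r}\big)$; hence, on a region free of trapped surfaces, $\partial_u\partial_v r\le 0$ is equivalent to $Q^2\le 2mr$, and it suffices to propagate the stronger inequality $Q^2\le mr$. Along $v=v_1$ this already holds: the Lemma of Remark \ref{nonsupercharged} gives $m_1 r_1\ge Q_1^2$, and in fact the non-supercharged condition yields the gap $m_1 r_1-Q_1^2\ge\tfrac12 m_1 r_1\ge 0$ (using $|Q_1|\le m_1\le\tfrac12 r_1$, the latter from the absence of MOTS on $\underline{C}$). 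Running the transport computation below along the cone $C$, where $\Omega^2\equiv 1$, propagates $Q^2\le mr$ from the corner $(u_0,v_1)$ over all of $\{u_0\}\times[v_1,v_2]$.

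For the $v$-propagation I would differentiate $mr-Q^2$ and insert \eqref{massv} and \eqref{EMS9} to get
\[
\partial_v\big(mr-Q^2\big)=-8\pi r^3\Omega^{-2}\partial_u r\,|\partial_v\phi|^2+\frac{Q^2\partial_v r}{2r}+m\,\partial_v r-2Q\,\partial_v Q .
\]
On the no-trapped-surface region Lemma \ref{negativeincomingcharged} makes the first term $\ge 4\pi(1-\epsilon)r^3|\partial_v\phi|^2$ and the next two nonnegative, while the only dangerous term $2Q\,\partial_v Q$ is handled via $|\partial_v Q|\le 4\pi\mathfrak{e}r^2|\phi||\partial_v\phi|$ together with a weighted Cauchy--Schwarz that spends only half of the favorable term:
\[
\partial_v\big(mr-Q^2\big)\ \ge\ 2\pi(1-\epsilon)\,r^3|\partial_v\phi|^2-\frac{C\mathfrak{e}^2}{1-\epsilon}Q^2r|\phi|^2 .
\]
Integrating from $v_1$ and invoking the initialization above, the claim $mr-Q^2\ge 0$ on $[v_1,v_2]$ reduces to dominating $\int_{v_1}^{v}Q^2r|\phi|^2$ by the favorable term plus the gap $m_1 r_1-Q_1^2$.

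To close this I would set $u_\flat:=\sup\{u'\in[u_0,u_*]:\partial_u\partial_v r\le 0 \text{ on }[u_0,u']\times[v_1,v_2]\}$, which is $>u_0$ by the initialization. On $[u_0,u_\flat]$ the bound $\partial_u\partial_v r\le 0$ forces $\partial_v r(\cdot,v)$ to be non-increasing in $u$, hence $r_2-r_1\le r_2(u_0)-r_1(u_0)$, and since $x(u)\ge x(u_*)=\tfrac{3\delta_0}{1+\delta_0}$ this gives $\delta(u)=\frac{r_2-r_1}{r_1}\le\frac{\delta_0/(1+\delta_0)}{x(u)-\delta_0/(1+\delta_0)}\le\frac12$. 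Thus $r_2/r_1\le\tfrac32$ on $[u_0,u_\flat]$, so Proposition \ref{A priori estimate on Q} is available there; combining its charge bound with the estimate for $\int r^2|\phi|^2$ derived in the proof of that proposition, the $L^\infty$ bound for $\phi$ in $v$, and the smallness hypotheses \eqref{first assumption on v2-v1}--\eqref{second assumption on v2-v1} on $v_2-v_1$, one makes the error integral in the transport inequality strictly smaller than the favorable term plus $m_1 r_1-Q_1^2$. Hence $mr-Q^2>0$ for $v>v_1$ throughout $[u_0,u_\flat]\times[v_1,v_2]$, and by continuity $\partial_u\partial_v r\le 0$ persists slightly past $u_\flat$, so $u_\flat=u_*$. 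Finally the displayed $\delta$-bound applied with $u=u_*$ gives $\delta(u)\le\tfrac12$ on $[u_0,u_*]$.

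The hard part will be exactly this error absorption: beating $\int_{v_1}^{v}Q^2r|\phi|^2$ with the favorable term plus $m_1 r_1-Q_1^2$. When the data on $\underline{C}$ is (nearly) extremal the gap $m_1 r_1-Q_1^2$ is small and provides no slack, so one cannot rely on the initial gap; instead one must exploit that $Q$ is then itself small — of the same order as the $v$-integrals of $|\partial_v\phi|^2$ and $|\phi|^2$ that already control the favorable term — which is precisely why the a priori charge bound of Proposition \ref{A priori estimate on Q} and the smallness of $v_2-v_1$ are built into the hypotheses. Making this quantitative, and checking that the continuity argument does not stall at an extremal slice, is the crux; the fact that Proposition \ref{A priori estimate on Q} in turn presupposes $r_2/r_1\le\tfrac32$ is why the two statements must be established together by a single bootstrap.
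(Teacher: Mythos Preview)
Your bootstrap framework is sound, and the way you pass from $\partial_u\partial_v r\le 0$ to $r_2-r_1\le r_2(u_0)-r_1(u_0)$ and then to $\delta(u)\le\frac{\delta_0}{x(1+\delta_0)-\delta_0}\le\tfrac12$ matches the paper exactly. The difference is in how you obtain $\partial_u\partial_v r\le 0$ inside the bootstrap.

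You propose to propagate $mr-Q^2\ge 0$ by a $v$-transport inequality, absorbing the cross term $2Q\partial_v Q$ via weighted Cauchy--Schwarz and then beating the residual error $\int Q^2 r|\phi|^2$ with the favorable term plus the initial gap. This is plausible and can probably be closed with the smallness assumptions \eqref{first assumption on v2-v1}--\eqref{second assumption on v2-v1}, but you explicitly leave the absorption step (``the hard part'') unverified, so what you have is a plan rather than a proof. Incidentally, your worry about the near-extremal case is misplaced: from $|Q_1|\le m_1\le r_1/2$ one always has $m_1r_1-Q_1^2\ge \tfrac12 m_1 r_1\ge Q_1^2$, so the gap is never small relative to $Q_1^2$.

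The paper avoids this entire transport argument. It bootstraps on $\delta\le\tfrac12$ directly; this feeds into Proposition~\ref{A priori estimate on Q}, whose conclusion $\frac{Q_a^2}{r_a^2}\le\frac{\omega}{4}\eta_a+\frac{2Q_1^2}{r_a^2}$ is then combined \emph{algebraically} with the non-supercharged condition. Since $\omega/4<1$ and $\eta_a=\frac{2(m_a-m_1)}{r_a}$,
\[
\frac{Q_a^2}{r_a^2}\ \le\ \eta_a+\frac{2Q_1^2}{r_a^2}\ =\ \frac{2m_a}{r_a}-\frac{2}{r_a}\Big(m_1-\frac{Q_1^2}{r_a}\Big)\ \le\ \frac{2m_a}{r_a},
\]
the last step using $m_1\ge Q_1^2/r_1\ge Q_1^2/r_a$ from Remark~\ref{nonsupercharged}. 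Plugging this into $\partial_u\partial_v r=-\frac{\Omega^2}{4r}\big(\frac{2m}{r}-\frac{Q^2}{r^2}\big)$ gives $\partial_u\partial_v r\le 0$ at once, which improves $\delta$ strictly below $\tfrac12$ and closes the bootstrap. No transport of $mr-Q^2$, no error absorption, no appeal to the smallness of $v_2-v_1$ beyond what is already baked into Proposition~\ref{A priori estimate on Q}. Your route trades this three-line algebraic step for an analysis you yourself flag as the crux; the paper's observation is that the charge bound from Proposition~\ref{A priori estimate on Q} already \emph{is} the pointwise inequality $Q^2\le 2mr$ in disguise.
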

\begin{proof}
	Let $x':=\inf\big\{x\in[\frac{3\delta_0}{1+\delta_0},1]\big|\delta(y)\leq\frac{1}{2}\text{ holds for } y\in[x,1]\big\}$. We will aim to show that $x'=\frac{3\delta_0}{1+\delta_0}$. {\color{black}This proves the claim that $\delta(u)\leq\frac{1}{2}$ for $u\in[u_0,u_8]$, as $x$ is monotonically decreasing with respect to $u$.}\\
	
	Since we have $\delta(x')\leq\frac{1}{2}$, it follows that $\frac{r_2(x')}{r_1(x')}\leq\frac{3}{2}$ and hence we can apply Proposition \ref{A priori estimate on Q}. Thus, for every {\color{black}$x\in [x',1],v_a\in [v_1,v_2]$}, we have
	{\color{black}
\begin{align*}
\frac{Q^2}{r^2}(x,v_a)&\leq\frac{\omega}{4}\eta_a+\frac{2Q_1^2}{r^2}\leq\eta_a+\frac{2Q_1^2}{r^2}\\
&=\frac{2m_a}{r}-\frac{2m_1}{r}+\frac{2Q_1^2}{r^2}=\frac{2m_a}{r}-\frac{2}{r}\bigg(m_1-\frac{Q_1^2}{r}\bigg)\\
&=\frac{2m_a}{r}-\frac{2}{r}\bigg(m_1-\frac{Q_1^2}{r_1}\bigg).
\end{align*}}
\textcolor{black}{By the non-supercharged assumption, \eqref{m Q 2} from Remark $\ref{nonsupercharged}$ tells us that $m_1-\frac{Q_1^2}{r_1}\geq 0$}. Hence we have
\begin{align*}
\frac{Q^2}{r^2}(x,v_a)\leq \frac{2m}{r}(x,v_a).
\end{align*}Now we rewrite \eqref{EMS1} into the following equivalent form:
\begin{align}\label{EMS1mass}{\color{black}
\partial_u\partial_vr=-\frac{\Omega^2}{4r}\bigg(\frac{2m}{r}-\frac{Q^2}{r}\bigg)}.
\end{align}
Since we have just shown that {\color{black}$\frac{2m}{r}\geq\frac{Q^2}{r}$} for $x\in [x',1]$, it follows that $\partial_u\partial_vr\leq 0$ in the region $[u_0,u']\times[v_1,v_2]$.\\

Integrating with respect to $u$, we get:  

\begin{align*}
\partial_vr(u)-\partial_vr(u_0)\leq 0\implies\partial_vr(u)\leq\partial_vr(u_0).
\end{align*}
Integrating the last inequality above with respect to $v$, we obtain
\begin{align*}
r_2(u)-r_1(u)\leq r_2(u_0)-r_1(u_0), \hspace{0.5cm}\text{for all } u\in [u_0,u'].
\end{align*}
{\color{black}Here $u'$ is defined so that $x(u')=x'$}. We can use {\color{black}this} to derive a bound for $\delta(u)$:
\begin{align*}
\delta(u) = \frac{r_2}{r_1}-1&=\frac{r_2-r_1}{r_2-(r_2-r_1)}\leq\frac{r_2(u_0)-r_1(u_0)}{r_2(u)-(r_2(u_0)-r_1(u_0))}\\
&\leq\frac{\delta_0}{\frac{r_2(u)}{r_1(u_0)}-\delta_0}=\frac{\delta_0}{x(u)(1+\delta_0)-\delta_0},\hspace{0.5cm}\text{ for all }u\in[u_0,u'].
\end{align*}
Hence{\color{black},} if $x'>\frac{3\delta_0}{1+\delta_0}$, we have $\delta(x')<\frac{\delta_0}{3\delta_0-\delta_0}=\frac{1}{2}$. By the continuity of the function $\delta(x)$, there exist some $x''<x'$ such that $\delta(x)<\frac{1}{2}$ for all $x\in[x'',1]$, which is a contradiction to infimum property of $x'$. {\color{black}Therefore, we must have} $x' = \frac{3\delta_0}{1+\delta_0}$.
\end{proof}
{\color{black}
\begin{remark}
	Since all subsequent Lemmas and Propositions depend on Proposition $\ref{A priori estimate on Q}$, the non-supercharged hypothesis is necessary for all of them.
\end{remark}
}

{\color{black}Recall {\color{black} that} $\epsilon:=\sup_{\underline{C}}\f{Q^2}{r^2}<1$}. Combining Proposition \ref{A priori estimate on Q} and Proposition \ref{mixed derivatives of r}, we get the following lemma:
\begin{lemma}\label{bound for charge}
	Assume that {\color{black}the initial data along $\underline{C}$ is not super-charged} and that $\mathcal{R}$ is free of trapped surfaces. Then for every {\color{black}$(u,v_a)\in[u_0,u_*]\times[v_1,v_2]$}, we have the following estimate for the charge:
	\begin{align*}
	{\color{black}\frac{Q_a^2}{r_a^2}}\leq\frac{\omega}{4}{\color{black}\eta_a}+2\epsilon.
	\end{align*}
	Furthermore, if {\color{black}$\eta_a\geq\frac{8\epsilon}{\omega}$, then $\frac{Q_a^2}{r_a^2}\leq\frac{\omega}{2}\eta_a$.}
\end{lemma}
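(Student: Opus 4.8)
The plan is to simply combine Proposition~\ref{A priori estimate on Q} and Proposition~\ref{mixed derivatives of r}, so the argument is essentially bookkeeping. First I would invoke Proposition~\ref{mixed derivatives of r}: under the non-supercharged hypothesis along $\underline{C}$ and the assumption that $\mathcal{R}$ is free of trapped surfaces, it gives $\delta(u)=\frac{r_2(u)}{r_1(u)}-1\leq\frac12$ for all $u\in[u_0,u_*]$, equivalently $\frac{r_2(u)}{r_1(u)}\leq\frac32$ on this range. This is exactly the hypothesis needed to apply Proposition~\ref{A priori estimate on Q} with $v_a$ ranging over $[v_1,v_2]$ and $u\in[u_0,u_*]$; note that although Proposition~\ref{A priori estimate on Q} is phrased with the bound $\frac{r_2}{r_1}\leq\frac32$ assumed on all of $[u_0,0]$, its conclusion only ever gets used on the sub-interval $[u_0,u_*]$, which is precisely what Proposition~\ref{mixed derivatives of r} provides.

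Next I would apply Proposition~\ref{A priori estimate on Q} to obtain, for every $(u,v_a)\in[u_0,u_*]\times[v_1,v_2]$,
\begin{align*}
\frac{Q_a^2(u)}{r_a^2(u)}\leq\frac{\omega}{4}\eta_a(u)+\frac{2Q_1^2(u)}{r_1^2(u)}.
\end{align*}
I would then bound the last term using the definition of $\epsilon$: since the point $(u,v_1)$ lies on $\underline{C}$, we have $\frac{Q_1^2(u)}{r_1^2(u)}\leq\sup_{\underline{C}}\frac{Q^2}{r^2}=\epsilon$, hence $\frac{2Q_1^2(u)}{r_1^2(u)}\leq 2\epsilon$. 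This immediately gives the first claimed inequality $\frac{Q_a^2}{r_a^2}\leq\frac{\omega}{4}\eta_a+2\epsilon$.

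For the second assertion, assume additionally that $\eta_a\geq\frac{8\epsilon}{\omega}$. Then $2\epsilon\leq\frac{\omega}{4}\eta_a$, and substituting this into the inequality just established yields $\frac{Q_a^2}{r_a^2}\leq\frac{\omega}{4}\eta_a+\frac{\omega}{4}\eta_a=\frac{\omega}{2}\eta_a$, as desired. There is no genuine obstacle here; the only step that merits attention is confirming that the hypotheses of Proposition~\ref{A priori estimate on Q} are satisfied uniformly on $[u_0,u_*]\times[v_1,v_2]$, which is exactly the content of Proposition~\ref{mixed derivatives of r} (and this is ultimately where the non-supercharged assumption enters, via inequality \eqref{m Q 2}).
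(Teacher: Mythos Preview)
Your proposal is correct and follows essentially the same route as the paper: invoke Proposition~\ref{mixed derivatives of r} to obtain $\delta(u)\leq\frac12$ on $[u_0,u_*]$, feed this into Proposition~\ref{A priori estimate on Q}, bound $\frac{2Q_1^2}{r_1^2}$ by $2\epsilon$ using the definition of $\epsilon$, and then absorb $2\epsilon$ into $\frac{\omega}{4}\eta_a$ under the extra hypothesis $\eta_a\geq\frac{8\epsilon}{\omega}$. Your remark that Proposition~\ref{A priori estimate on Q} is only needed on $[u_0,u_*]$ (even though its hypothesis is stated on $[u_0,0]$) is a careful and correct observation.
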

\begin{proof}
	The first part of the lemma is almost proven: Since the hypothesis of this lemma satisfies that of Proposition $\ref{mixed derivatives of r}$, we have $\delta(u)\leq\frac{3}{2}$ for all $u\in [u_0,u_*]$, which is the hypothesis of Proposition $\ref{A priori estimate on Q}${\color{black}. This gives} us the first part of the Lemma. The second part follows from a computation. {\color{black}$\eta_a\geq\frac{8\epsilon}{\omega}$} implies that {\color{black}$2\epsilon\leq \f{\o}{4}\eta_a.$ Hence,
$$\f{Q_a^2}{r_a^2}\leq \f{\o}{4}\eta_a+2\e\leq \f{\o}{4}\eta_a+\f{\o}{4}\eta_a=\f{\o}{2}\eta_a.$$
	
}

\end{proof}
\subsection{Estimates for $D_u\phi$, $\partial_vr$}
In this section, we will prove two lemmas which hold in the region {\color{black}$[u_0,u_*]\times[v_1,v_2]$ under} the premise that the region is free of trapped surfaces. These are the {\color{black}equivalents of Lemmas} \ref{keylemma1real} and \ref{keylemma2real} in the uncharged case.
\begin{lemma}\label{keylemma1}
	Define $\Theta:=r_2|D_u\phi_2|-r_1|D_u\phi_1|$. Suppose that {\color{black}the initial data along $\underline{C}$ is not super-charged} and $\mathcal{D}(0,v_1)\cup\mathcal{R}$ is free of trapped surfaces. {\color{black}If $\eta\geq \f{8\epsilon}{\o}$, then }
	\begin{align*}
	\Theta(u)^2\leq\bigg(1+\frac{\omega}{2}\bigg) \frac{-\partial_ur_2}{8\pi\Omega_2^{-2}\partial_vr_2}(m_2-m_1)\bigg(\frac{1}{r_1}-\frac{1}{r_2}\bigg)(u)
	\end{align*}
	for all $u\in [u_0,u_*]$.
\end{lemma}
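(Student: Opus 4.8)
\textbf{Proof proposal for Lemma \ref{keylemma1}.}

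The plan is to estimate $\Theta^2 = (r_2|D_u\phi_2| - r_1|D_u\phi_1|)^2$ by first controlling the two quantities $r_i|D_u\phi_i|$ via the mass difference $m_2 - m_1$, using the wave equation \eqref{complex wave equation} and the mass formula \eqref{massu}. First I would integrate \eqref{complex wave equation} in $v$ from $v_1$ to $v_2$; since \eqref{complex wave equation} reads $\partial_v(rD_u\phi) + \partial_v\phi\,\partial_ur = -\mathfrak{e}i\frac{Q\phi\Omega^2}{4r}$, this expresses $r_2 D_u\phi_2 - r_1 D_u\phi_1$ as an integral of $-\partial_v\phi\,\partial_ur - \mathfrak{e}i\frac{Q\phi\Omega^2}{4r}$ over $[v_1,v_2]$. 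Taking absolute values, applying the triangle inequality, and then Cauchy--Schwarz, I would bound $\Theta^2 = (|r_2 D_u\phi_2| - |r_1 D_u\phi_1|)^2 \leq |r_2 D_u\phi_2 - r_1 D_u\phi_1|^2$ by a product of an integral of $r^2|\partial_v\phi|^2$-type weights against an integral of $\frac{(-\partial_ur)}{\cdots}$-type weights, plus the cross and charge-contribution terms.

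The key input for turning the $\int r^2|\partial_v\phi|^2$ factor into $m_2 - m_1$ is formula \eqref{massv} (or equivalently \eqref{massu}): $\partial_vm = -8\pi r^2\Omega^{-2}\partial_ur|\partial_v\phi|^2 + \frac{Q^2\partial_vr}{2r^2}$, so that $-8\pi r^2\Omega^{-2}\partial_ur|\partial_v\phi|^2 \leq \partial_vm$ (using $\partial_vr \geq 0$, valid since $\mathcal{R}$ has no trapped surfaces). Dividing by $-8\pi\Omega^{-2}\partial_ur$ and integrating in $v$, and using that $\Omega^{-2}\partial_ur$ is monotone along incoming geodesics by \eqref{EMS3} together with Lemma \ref{negativeincomingcharged} to pull the weight out of the integral, I expect to recover a factor proportional to $(m_2-m_1)$ times a geometric factor like $\frac{1}{r_1}-\frac{1}{r_2}$ (the latter coming from integrating $\partial_vr/r^2 = -\partial_v(1/r)$, or from the weight $\frac{-\partial_ur_2}{\Omega_2^{-2}\partial_vr_2}$ versus its value along the geodesic). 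The charge term $\mathfrak{e}\frac{Q\phi\Omega^2}{4r}$ and the cross terms must then be absorbed: this is where Lemma \ref{bound for charge} enters — under the hypothesis $\eta \geq \frac{8\epsilon}{\omega}$ we have $\frac{Q_a^2}{r_a^2} \leq \frac{\omega}{2}\eta_a$, so the charge contribution is controlled by $\omega$ times the main term, and likewise the assumptions \eqref{first assumption on v2-v1}, \eqref{second assumption on v2-v1} on $v_2 - v_1$ (combined with $L = \sup_{\underline{C}} r|\phi|^2$ and the bound on $r^2|\phi|^2$ from the chain of estimates in the proof of Proposition \ref{A priori estimate on Q}) force the total error to be at most a $\frac{\omega}{2}$-fraction of $\frac{-\partial_ur_2}{8\pi\Omega_2^{-2}\partial_vr_2}(m_2-m_1)(\frac{1}{r_1}-\frac{1}{r_2})$, yielding the factor $(1+\frac{\omega}{2})$.

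The main obstacle I anticipate is the bookkeeping needed to show that every error term — the charge term from \eqref{complex wave equation}, the cross term $2|r_2 D_u\phi_2||r_1 D_u\phi_1|$ that must be discarded to pass from $|r_2 D_u\phi_2 - r_1 D_u\phi_1|^2$ to $\Theta^2$ (here one actually uses $(|a|-|b|)^2 \leq |a-b|^2$, so no discarding is needed — but the charge and $\phi_1$ contributions remain), and the terms involving $|\phi_1|^2$ on $\underline{C}$ — all fit under the budget set by \eqref{first assumption on v2-v1}, \eqref{second assumption on v2-v1} and $\eta \geq \frac{8\epsilon}{\omega}$, with the precise constants $\frac{9}{4}$, $12\pi$, $45\pi$, $160\pi$ matching up. The geometric manipulation identifying $\frac{1}{r_1} - \frac{1}{r_2}$ as the natural weight (rather than, say, $v_2 - v_1$) will require care: I would use $\partial_v(\Omega^{-2}\partial_ur) = -4\pi r\Omega^{-2}|D_u\phi|^2 \cdot(\text{sign})$ from \eqref{EMS3}, or more directly rewrite the $v$-integral of the weight $\frac{1}{r^2}\cdot(-\partial_ur)\Omega^{-2}\big/\big((-\partial_ur_2)\Omega_2^{-2}\big)$ and bound it using monotonicity of $\Omega^{-2}\partial_ur$ to get exactly $\int_{v_1}^{v_2}\frac{\partial_vr}{r^2}\,dv = \frac{1}{r_1}-\frac{1}{r_2}$. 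Everything else is Cauchy--Schwarz and the already-established monotonicity facts.
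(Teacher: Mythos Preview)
Your approach is essentially identical to the paper's: integrate \eqref{complex wave equation}, use $(|a|-|b|)^2\le|a-b|^2$, split the square via Young's inequality $(a+b)^2\le(1+\kappa)a^2+(1+\tfrac{1}{\kappa})b^2$ with $\kappa=\tfrac{\omega}{4}$ into a scalar-field piece and a charge piece, apply Cauchy--Schwarz to each, and convert the scalar-field factors into $(m_2-m_1)$ via \eqref{massv} and into $\tfrac{-\partial_ur_2}{\Omega_2^{-2}\partial_vr_2}\big(\tfrac{1}{r_1}-\tfrac{1}{r_2}\big)$ via monotonicity. Two small corrections: the monotonicity you need is $\partial_ur\ge\partial_ur_2$ from $\partial_u\partial_vr\le0$ (Proposition~\ref{mixed derivatives of r}) together with $\Omega^{-2}\partial_vr\ge\Omega_2^{-2}\partial_vr_2$ from \eqref{EMS4} --- not \eqref{EMS3}, which gives $u$-monotonicity of $\Omega^{-2}\partial_ur$ and is irrelevant to this $v$-integral --- and the smallness assumption invoked for the charge piece $\int r|\phi|^2\,dv$ is \eqref{second assumption on v2-v1}, not \eqref{first assumption on v2-v1}.
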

\begin{proof}
	By integrating equation $\eqref{complex wave equation}$, we get
	\begin{align}
	\Theta^2 &= \big(r_2|D_u\phi_2|-r_1|D_u\phi_1|\big)^2\nonumber\\
	&\leq|r_2D_u\phi_2-r_1D_u\phi_1|^2=\bigg|\int_{v_1}^{v_2}-\partial_ur\partial_v\phi -i\mathfrak{e}\frac{Q\phi\Omega^2}{4r}dv\bigg|^2\nonumber\\
	&\leq(1+\kappa)\bigg|\int_{v_1}^{v_2}-\partial_ur|\partial_v\phi| dv\bigg|^2+\big(1+\frac{1}{\kappa}\big)\mathfrak{e}^2\bigg|\int_{v_1}^{v_2}\frac{Q\phi\Omega^2}{4r}dv\bigg|^2\text{, for any }\kappa>0\nonumber\\
	&\leq\frac{1+\kappa}{8\pi}\int_{v_1}^{v_2}-8\pi r^2\partial_ur\Omega^{-2}|\partial_v\phi|^2dv\int_{v_1}^{v_2}-\frac{\partial_ur}{r^2\Omega^{-2}}dv+\big(1+\frac{1}{\kappa}\big)\mathfrak{e}^2\bigg|\int_{v_1}^{v_2}\frac{Q\phi\Omega^2}{4r}dv\bigg|^2,\label{lemma1eqn1}
	\end{align}
	{\color{black}where we used} Holder's inequality for the last inequality.\\
	
	{\color{black}We bound the first summand like how we did in} Lemma \ref{keylemma1real}. We bound the first integral of the first {\color{black}term}:
	\begin{align}
	\int_{v_1}^{v_2}-8\pi r^2\partial_ur\Omega^{-2}|\partial_v\phi|^2dv&=\int_{v_1}^{v_2}\partial_vm-\frac{Q^2\partial_vr}{2r^2}dv\nonumber\\
	&\leq\int_{v_1}^{v_2}\partial_vm\text{ }dv, \text{ since }\partial_vr> 0\nonumber\\
	&=m_2-m_1\label{lemma1eqn2}.
	\end{align}
	To bound the second integral of the first {\color{black}term}, we apply Proposition $\ref{mixed derivatives of r}$ to get $\partial_v\partial_ur\leq 0$, and hence $\partial_ur\geq\partial_ur_2$. Also, equation $\eqref{EMS4}$ implies that $\Omega_2^{-2}\partial_vr_2\leq\Omega^{-2}\partial_vr$. Combining these two pieces of information, we have
	
	\begin{align}
	\int_{v_1}^{v_2}-\frac{\partial_ur}{r^2\Omega^{-2}}dv&={\color{black}\int_{r_1}^{r_2}-\frac{\partial_ur}{r^2\Omega^{-2}\partial_vr}dr}\nonumber\\
	&{\color{black}\leq-\partial_ur_2\int_{r_1}^{r_2}\frac{1}{r^2\Omega^{-2}\partial_vr}dr}\nonumber\\
	&\leq\frac{-\partial_ur_2}{\Omega_2^{-2}\partial_vr_2}\int_{r_1}^{r_2}\frac{1}{r^2}dr\nonumber\\
	&=\frac{\partial_ur_2}{\Omega_2^{-2}\partial_vr_2}\big(\frac{1}{r_2}-\frac{1}{r_1}\big)\label{lemma1eqn3}.
	\end{align}
Substituting $\eqref{lemma1eqn2}$ and $\eqref{lemma1eqn3}$ back into $\eqref{lemma1eqn1}$ gives us:
\begin{align}
	\Theta(u)^2\leq (1+\kappa)\frac{\partial_ur_2}{8\pi\Omega_2^{-2}\partial_vr_2}(m_2-m_1)\bigg(\frac{1}{r_2}-\frac{1}{r_1}\bigg)+\big(1+\frac{1}{\kappa}\big)\mathfrak{e}^2\bigg|\int_{v_1}^{v_2}\frac{Q\phi\Omega^2}{4r}dv\bigg|^2\label{Thetabound}.
\end{align}		
To bound the remaining integral, {\color{black}we apply the Cauchy-Schwarz inequality and  Lemma \ref{bound for charge}:}
\begin{align}
\bigg|\int_{v_1}^{v_2}\frac{Q\phi\Omega^2}{4r}dv\bigg|^2&= \bigg|\frac{1}{4}\int_{v_1}^{v_2}\frac{Q}{r^\frac{3}{2}\Omega^{-2}}r^\frac{1}{2}\phi\text{ }dv\bigg|^2\nonumber\\
&\leq\frac{1}{16}\int_{v_1}^{v_2}\frac{Q^2}{r^2}\frac{1}{r}\frac{1}{\Omega^{-2}}\frac{1}{\Omega^{-2}}dv\cdot\int_{v_1}^{v_2}r|\phi|^2dv\nonumber\\
&\leq{\color{black}\frac{1}{16}\int_{r_1}^{r_2}\frac{\omega}{2}\eta_a\frac{1}{r}\frac{1}{\Omega^{-2}\partial_vr}\frac{-\partial_ur}{-\Omega^{-2}\partial_ur}dr\int_{v_1}^{v_2}r|\phi|^2dv}\label{lastterm}.
\end{align} 
We bound the first integral: {\color{black} by Proposition \ref{mixed derivatives of r} we have $\partial_u\partial_v r\leq 0$, which implies $\partial_ur_2\leq\partial_ur$.} And with $\Omega_2^{-2}\partial_vr_2\leq\Omega^{-2}\partial_vr$ by \eqref{EMS4}, we derive
\begin{align}
\int_{r_1}^{r_2}\frac{\omega}{2}\eta_a\frac{1}{r}\frac{1}{\Omega^{-2}\partial_vr}\frac{-\partial_ur}{-\Omega^{-2}\partial_ur}dr&\leq\frac{-\partial_ur_2}{\Omega_2^{-2}\partial_vr_2}\int_{r_1}^{r_2}\frac{\omega}{2}\frac{2(m_a-m_1)}{r_a}\frac{1}{r}\frac{1}{-\Omega^{-2}\partial_ur}dr\nonumber\\
&\leq \frac{-2\omega}{1-\epsilon}\frac{\partial_ur_2}{\Omega_2^{-2}\partial_vr_2}\int_{r_1}^{r_2}(m_2-m_1)\frac{1}{r^2}dr, \nonumber\\&\hspace{0.5cm}{\color{black}\text{by Lemma \ref{negativeincomingcharged}, (} \Omega^{-2}\partial_u r\leq-\frac{1-\epsilon}{2}\text{)}}\nonumber\\
&=\frac{-2\omega}{1-\epsilon}\frac{\partial_ur_2}{\Omega_2^{-2}\partial_vr_2}(m_2-m_1)\bigg(\frac{1}{r_1}-\frac{1}{r_2}\bigg)\nonumber\\
&=-\frac{2\omega}{1-\epsilon}\frac{\partial_ur_2}{\Omega_2^{-2}\partial_vr_2}(m_2-m_1)\bigg(\frac{1}{r_1}-\frac{1}{r_2}\bigg)\label{firstintegral}.
\end{align}
Now we bound the second integral: 
\begin{align*}
&\int_{v_1}^{v_2}r|\phi|^2dv= \int_{v_1}^{v_2}r\bigg|\int_{v_1}^{v'}\partial_v\phi\text{ }dv +\phi_1\bigg|^2dv'\\
\leq& 2\int_{v_1}^{v_2}\bigg(r\bigg|\int_{v_1}^{v'}\partial_v
\phi\text{ }dv\bigg|^2+r|\phi_1|^2\bigg)dv'\\
\leq& 2\int_{v_1}^{v_2}\bigg(r(v'-v_1)\int_{v_1}^{v'}|\partial_v
\phi|^2\text{ }dv+r|\phi_1|^2\bigg)dv'\\
\leq& 2r_2\int_{v_1}^{v_2}\bigg((v_2-v_1)\int_{v_1}^{v'}\frac{-8\pi\Omega^{-2}r^2\partial_ur|\partial_v\phi|^2}{-8\pi\Omega^{-2}\partial_urr^2}dv+|\phi_1|^2\bigg)dv'\\
\leq& 2r_2\int_{v_1}^{v_2}\bigg(\frac{2(v_2-v_1)}{(1-\epsilon)8\pi r_1^2}\int_{v_1}^{v'}\partial_vm\text{ }dv+|\phi_1|^2\bigg)dv'\\
=&2r_2\int_{v_1}^{v_2}\bigg(\frac{v_2-v_1}{(1-\epsilon)4\pi r_1^2}(m_2-m_1)+|\phi_1|^2\bigg)dv'\\
\leq& \frac{r_2^2}{r_1^2}\frac{(v_2-v_1)^2}{4\pi(1-\epsilon)}\frac{2(m_2-m_1)}{r_2}+2r_2(v_2-v_1)|\phi_1|^2.
\end{align*}
Using the no trapped surface {\color{black}or MOTS assumption, we have} $\frac{2(m_2-m_1)}{r_2}=
\eta<1$. Using Lemma \ref{mixed derivatives of r}, $\frac{r_2}{r_1}\leq\frac{3}{2}$. Hence we have:
\begin{align}
\int_{v_1}^{v_2}r|\phi|^2dv\leq\frac{9(v_2-v_1)^2}{16\pi(1-\epsilon)}+2r_2(u_0)(v_2-v_1)|\phi_1|^2\leq\omega\frac{1-\epsilon}{320\mathfrak{e}^2\pi}, \label{secondintegral}
\end{align}
where we use the assumption in \eqref{second assumption on v2-v1}.

Substituting \eqref{firstintegral} and \eqref{secondintegral} back into \eqref{lastterm}, we get
{\color{black}\begin{align*}
\bigg|\int_{v_1}^{v_2}\frac{Q\phi\Omega^2}{4r}dv\bigg|^2\leq-\frac{\omega^2}{160\mathfrak{e}^2\pi}\frac{\partial_ur_2}{\Omega_2^{-2}\partial_vr_2}(m_2-m_1)\bigg(\frac{1}{r_1}-\frac{1}{r_2}\bigg).
\end{align*}}
Now, set $\kappa = \frac{\omega}{4}$ in \eqref{Thetabound} and utilize the inequality above:
\begin{align*}
\Theta^2 \leq&\big(1+\frac{\omega}{4}\big)\frac{-\partial_ur_2}{8\pi\Omega_2^{-2}\partial_vr_2}(m_2-m_1)\bigg(\frac{1}{r_1}-\frac{1}{r_2}\bigg)\\
&+\big(1+\frac{4}{\omega}\big)\frac{\omega^2}{160\pi}\frac{-\partial_ur_2}{\Omega_2^{-2}\partial_vr_2}(m_2-m_1)\bigg(\frac{1}{r_1}-\frac{1}{r_2}\bigg)\\
=&\bigg(1+\frac{\omega}{4}+\frac{\omega}{20}(4+\omega)\bigg)\frac{-\partial_ur_2}{8\pi\Omega_2^{-2}\partial_vr_2}(m_2-m_1)\bigg(\frac{1}{r_1}-\frac{1}{r_2}\bigg).
\end{align*}
Using the assumption that {\color{black}$\omega<\frac{2}{3}$}, we have $\omega+4<5$, and therefore
\begin{align*}
\Theta^2&\leq\big(1+\frac{\omega}{4}+\frac{5\omega}{20}\big)\frac{-\partial_ur_2}{8\pi\Omega_2^{-2}\partial_vr_2}(m_2-m_1)\bigg(\frac{1}{r_1}-\frac{1}{r_2}\bigg)\\
&=\bigg(1+\frac{\omega}{2}\bigg)\frac{-\partial_ur_2}{8\pi\Omega_2^{-2}\partial_vr_2}(m_2-m_1)\bigg(\frac{1}{r_1}-\frac{1}{r_2}\bigg).
\end{align*}
\end{proof}
\begin{lemma}\label{keylemma2}
	Assume that $\eta\geq\frac{8\epsilon}{\omega}$,  {\color{black}the initial data along $\underline{C}$ is not super-charged,} and $\mathcal{D}(0,v_1)\cup\mathcal{R}$ is free of trapped surfaces. Then
\begin{align*}
\frac{\Omega_2^{-2}\partial_vr_2}{\Omega_1^{-2}\partial_vr_1}(u)\leq e^{-(1-\frac{\omega}{2})\eta(u)}
\end{align*}
for all $u\in[u_0,u_*]$.
\end{lemma}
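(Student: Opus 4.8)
The plan is to prove the inequality pointwise in $u$, by integrating the $v$-evolution of $\Omega^{-2}\partial_v r$ along the incoming ray $\{u\}\times[v_1,v_2]$; since this works for each fixed $u\in[u_0,u_*]$ separately, there is no base case to check. From $\eqref{EMS4}$ one has $\partial_v\log(\Omega^{-2}\partial_v r)=-4\pi r|\partial_v\phi|^2/\partial_v r$, so integrating from $v_1$ to $v_2$ gives
\[
-\log\frac{\Omega_2^{-2}\partial_vr_2}{\Omega_1^{-2}\partial_vr_1}(u)=\int_{v_1}^{v_2}\frac{4\pi r|\partial_v\phi|^2}{\partial_v r}\,dv .
\]
The first real step is an exact algebraic rewriting of the integrand. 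The Hawking mass formula gives $\Omega^{-2}\partial_ur\,\partial_vr=\tfrac14\big(\tfrac{2m}{r}-1\big)$, hence $\tfrac{1}{\partial_vr}=\tfrac{-4\,\Omega^{-2}\partial_ur}{1-2m/r}$; combining this with $\eqref{massv}$ written as $-8\pi r^2\Omega^{-2}\partial_ur|\partial_v\phi|^2=\partial_vm-\tfrac{Q^2\partial_vr}{2r^2}$ yields
\[
\frac{4\pi r|\partial_v\phi|^2}{\partial_vr}=\frac{2}{r\big(1-\tfrac{2m}{r}\big)}\Big(\partial_vm-\frac{Q^2\partial_vr}{2r^2}\Big).
\]

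Next I would strip this to a clean lower bound. There are no trapped surfaces, so $\partial_vr>0$, $m\geq 0$ (the remark after Lemma \ref{negativeincomingcharged}) and $\tfrac{2m}{r}<1$; moreover $\partial_vm-\tfrac{Q^2\partial_vr}{2r^2}=-8\pi r^2\Omega^{-2}\partial_ur|\partial_v\phi|^2\geq0$ since $\partial_ur\leq0$ by Lemma \ref{negativeincomingcharged}. Because $r\leq r_2$ and $1-\tfrac{2m}{r}\leq1$ in $\mathcal{R}$, the integrand is $\geq\tfrac{2}{r_2}\big(\partial_vm-\tfrac{Q^2\partial_vr}{2r^2}\big)$, so integrating gives
\[
-\log\frac{\Omega_2^{-2}\partial_vr_2}{\Omega_1^{-2}\partial_vr_1}(u)\;\geq\;\eta(u)-\frac{1}{r_2}\int_{v_1}^{v_2}\frac{Q^2\partial_vr}{r^2}\,dv .
\]
It then remains to bound the charge integral by $\tfrac{\omega}{4}\eta$ (any bound $\leq\tfrac{\omega}{2}\eta$ suffices). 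Changing variables $v\mapsto r$ turns it into $\int_{r_1}^{r_2}\tfrac{Q^2}{r^2}\,dr$; I would insert Lemma \ref{bound for charge} in the form $\tfrac{Q^2}{r^2}\leq\tfrac{\omega}{4}\eta_v+2\epsilon$ together with $\eta_v\leq\tfrac{2(m_2-m_1)}{r_1}$ (valid since $m$ is increasing and $r\geq r_1$ in $v$), and then use $\delta(u)\leq\tfrac12$ from Proposition \ref{mixed derivatives of r}, i.e. $r_2-r_1\leq\tfrac{r_1}{2}$, to get $\int_{r_1}^{r_2}\tfrac{Q^2}{r^2}\,dr\leq\tfrac{\omega}{4}(m_2-m_1)+\epsilon\,r_2$. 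Dividing by $r_2$ and absorbing $\epsilon\leq\tfrac{\omega}{8}\eta$ via the hypothesis $\eta\geq\tfrac{8\epsilon}{\omega}$ gives $\tfrac{1}{r_2}\int_{v_1}^{v_2}\tfrac{Q^2\partial_vr}{r^2}\,dv\leq\tfrac{\omega}{4}\eta$, hence $-\log\frac{\Omega_2^{-2}\partial_vr_2}{\Omega_1^{-2}\partial_vr_1}\geq(1-\tfrac{\omega}{4})\eta\geq(1-\tfrac{\omega}{2})\eta$, which is the claim after exponentiating.

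I expect the charge integral to be the only real obstacle: everything else is an exact identity or a one-line sign/monotonicity estimate, whereas controlling $\int_{r_1}^{r_2}Q^2/r^2\,dr$ by a small enough multiple of $\eta$ is precisely where all three hypotheses of the lemma must be used at once — the refined charge bound of Lemma \ref{bound for charge}, the smallness $\delta\leq\tfrac12$ from Proposition \ref{mixed derivatives of r}, and the threshold $\eta\geq8\epsilon/\omega$. If one is content with the stated (non-sharp) constant $1-\omega/2$ instead of the $1-\omega/4$ produced above, a correspondingly lossier version of this last estimate is already enough.
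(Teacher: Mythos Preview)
Your proof is correct and follows essentially the same route as the paper: integrate the log-derivative of $\Omega^{-2}\partial_vr$ via \eqref{EMS4}, rewrite the integrand through the Hawking-mass identity and \eqref{massv}, replace $\tfrac{1}{r-2m}$ by $\tfrac{1}{r_2}$, and control the residual charge integral using Lemma~\ref{bound for charge} together with $\delta\leq\tfrac12$ from Proposition~\ref{mixed derivatives of r}. The only (cosmetic) difference is that the paper applies the conditional form $\tfrac{Q^2}{r^2}\leq\tfrac{\omega}{2}\eta_a$ of Lemma~\ref{bound for charge} pointwise in $v$ and then uses $\ln(r_2/r_1)\leq\ln\tfrac32\leq1$, whereas you use the unconditional form $\tfrac{Q^2}{r^2}\leq\tfrac{\omega}{4}\eta_a+2\epsilon$ and absorb $\epsilon\leq\tfrac{\omega}{8}\eta$ only at the end --- which in fact yields the slightly sharper exponent $(1-\tfrac{\omega}{4})\eta$.
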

\begin{proof}
	Dividing both sides of equation $\eqref{EMS3}$ by $\Omega^{-2}\partial_vr$ and integrating from $v_1$ to $v_2$, we get
	\begin{align*}
	\ln|\Omega_2^{-2}\partial_vr_2|-\ln|\Omega_1^{-2}\partial_vr_1| = \ln\bigg(\frac{\Omega_2^{-2}\partial_vr_2}{\Omega_1^{-2}\partial_vr_1}\bigg)=-4\pi\int_{v_1}^{v_2}\frac{r|\partial_v\phi|^2}{\partial_vr}dv
	\end{align*}
	By equation $\eqref{massv}$ and the definition of the Hawking mass, {\color{black}we have}
	
	\begin{align*}
	\frac{1}{r-2m}\big(\partial_vm-\frac{Q^2\partial_vr}{2r^2}\big)=\frac{-8\pi r^2\Omega^{-2}\partial_ur}{-4r\Omega^{-2}\partial_ur\partial_vr}=\frac{2\pi r|\partial_v\phi|^2}{\partial_vr}.
	\end{align*}
	Hence for any $u\in[u_0,u_*]$,
	
	\begin{align*}
	&\ln\bigg(\frac{\Omega_2^{-2}\partial_vr_2}{\Omega_1^{-2}\partial_vr_1}\bigg)=-2\int_{v_1}^{v_2}\frac{1}{r-2m}\big(\partial_vm-\frac{Q^2\partial_vr}{2r^2}\big)dv\\
	&\leq-2\int_{v_1}^{v_2}\frac{1}{r}\big(\partial_vm-\frac{Q^2\partial_vr}{2r^2}\big)dv\leq\frac{-2}{r_2}\int_{v_1}^{v_2}\big(\partial_vm-\frac{Q^2\partial_vr}{2r^2}\big)dv\\
	&=-\frac{2(m_2-m_1)}{r_2}+\frac{2}{r_2}\int_{r_1}^{r_2}\frac{Q^2}{2r^2}dr=-\eta+\frac{2}{r_2}\int_{r_1}^{r_2}\frac{Q^2}{2r^2}dr.
	\end{align*}
	By Lemma $\ref{bound for charge}$, we have
	\begin{align*}
	    \frac{Q(u,v)^2}{r(u,v)^2}\leq\frac{\omega}{2}\frac{2(m(u,v)-m(u,v_1))}{r(u,v)}.
	\end{align*}
	Hence,
	\begin{align*}
	    \frac{2}{r_2}\int_{r_1}^{r_2}\frac{Q^2}{2r^2}dr&\leq\frac{\omega}{2r_2}\int_{r_1}^{r_2}\frac{2(m(u,v)-m(u,v_1))}{r(u,v)}dr\\
	    &\leq{\color{black}\o\cdot}\frac{m_2-m_1}{r_2}\ln\big(\frac{r_2}{r_1}\big)\leq\frac{\omega}{2}\ln\big(\frac{3}{2}\big)\eta\leq\frac{\omega}{2}\eta.
	\end{align*}
Combining the above estimates, we get:
\begin{align*}
    \ln\bigg(\frac{\Omega_2^{-2}\partial_vr_2}{\Omega_1^{-2}\partial_vr_1}\bigg)\leq-\bigg(1-\frac{\omega}{2}\bigg)\eta.
\end{align*}
Exponentiating both sides of the above inequality gives us the desired result.
\end{proof}
\subsection{The proof of Theorem $\ref{thm1.3}$}
{\color{black}
	
	We are finally ready to prove a lower bound on $\frac{d\eta}{du}$. The presence of charge case poses some difficulties not present in the uncharged case. This is because in order to apply Lemma \ref{bound for charge}, \ref{keylemma1} and \ref{keylemma2}, we need to ensure that the $\eta>\frac{8\epsilon}{\omega}$ assumption always hold to get a lower bound on $\frac{d\eta}{du}$. On the other hand, we exactly need this lower bound on $\frac{d\eta}{du}$ to prove the assumption that $\eta>\frac{8\epsilon}{\omega}$. Hence we need to do this using a bootstrap argument again. }\\
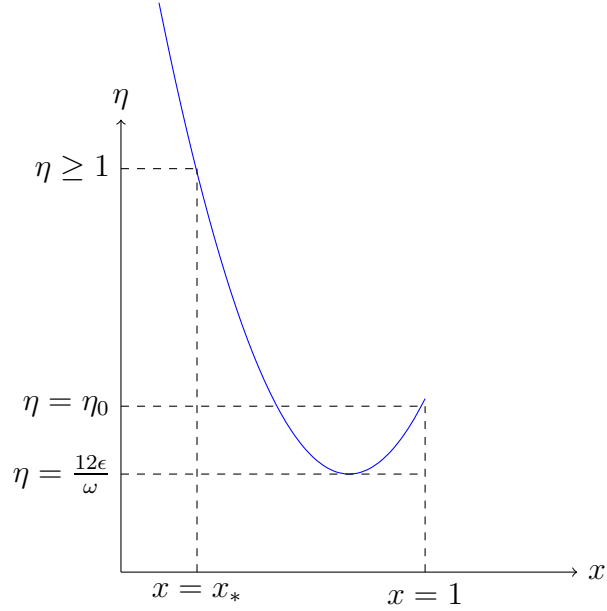
\begin{figure}
    \centering
    \begin{tikzpicture}
      \draw[->] (0,0) -- (6,0) node[right] {$x$};
      \draw[->] (0,0) -- (0,6) node[above] {$\eta$};
      \draw[scale=1,domain=0.5:4,smooth,variable=\x,blue] plot ({\x},{(\x-3)*(\x-3)+1.3});
      \draw [dashed] (4,0)node[anchor = north]{$x=1$} -- (4,2.2);
      \draw [dashed] (0,2.2)node[anchor = east]{$\eta = \eta_0$} -- (4,2.2);
      \draw [dashed] (0,1.3)node[anchor = east]{$\eta = \frac{12\epsilon}{\omega}$} -- (4,1.3);
      \draw [dashed] (1,0)node[anchor = north]{$x=x_*$} -- (1,5.35);
      \draw [dashed] (0,5.35)node[anchor = east]{$\eta \geq 1$} -- (1,5.35);
\end{tikzpicture}
 \caption{Idea of proof of Lemma \ref{theoremlemma} and Theorem \ref{thm1.3}}
 \label{fig:graphcomplex}
\end{figure}

We will in fact prove something a little stronger: we show that $\eta(u)\geq \frac{12\epsilon}{\omega}$ for $u\in[u_0,u_*]$. {\color{black}We use a bootstrap argument to prove this in Lemma $\ref{theoremlemma}$}: Assuming that the differential inequality holds for all $u\in [u_0,u']$, where $u'<u_*$, then $\eta(u)\geq\frac{12\epsilon}{\omega}$ holds in a slightly larger region as well.\\
\begin{lemma}\label{theoremlemma}
	Assume that the region $\mathcal{D}(0,v_1)\cup\mathcal{R}$ is free of trapped surfaces and {\color{black}the initial data along $\underline{C}$ is not super-charged.} Then if $\eta_0\geq\frac{13\epsilon}{\omega}+g_\omega(\delta_0)$, we have  $\eta(x)\geq\frac{12\epsilon}{\omega}$ for all {\color{black}$x(u)\in \big[\frac{3\delta_0}{1+\delta_0},1\big]$}. Over here, $g_\omega(x)$ is defined as:
	\begin{align}\label{g omega x}
	g_\omega(x):=\frac{1+\frac{\omega}{2}}{1-\frac{\omega}{2}}\frac{1}{(1+x)^2}\bigg(\bigg(\frac{2^{1-\frac{\omega}{2}}}{\omega}+\frac{1}{2^{1+\frac{\omega}{2}}(1+\frac{\omega}{2})}\bigg)x^{1-\frac{\omega}{2}}-\frac{2}{\omega}x-\frac{1}{1+\frac{\omega}{2}}x^2\bigg).
	\end{align}
\end{lemma}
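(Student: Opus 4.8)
The plan is to set up a bootstrap in the dimensionless variable $x$, running from $x=1$ (where $u=u_0$) down to $x_* = \frac{3\delta_0}{1+\delta_0}$ (where $u=u_*$). Define $x'' := \inf\{x\in[x_*,1]\mid \eta(y)\geq\frac{12\epsilon}{\omega}\text{ for all }y\in[x,1]\}$; the goal is to show $x''=x_*$. At $x=1$ the bootstrap assumption holds since $\eta_0\geq\frac{13\epsilon}{\omega}+g_\omega(\delta_0)\geq\frac{13\epsilon}{\omega}\geq\frac{12\epsilon}{\omega}$ (using $g_\omega(\delta_0)\geq 0$, which one should check for the relevant range of $\delta_0$; note $g_\omega$ vanishes at some point and may need a sign check). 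On $[x'',1]$ we then have $\eta\geq\frac{12\epsilon}{\omega}\geq\frac{8\epsilon}{\omega}$, so Lemmas \ref{bound for charge}, \ref{keylemma1}, and \ref{keylemma2} all apply.

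Next I would feed \eqref{keylemma1intro}-type bound (Lemma \ref{keylemma1}) and Lemma \ref{keylemma2} into the exact identity \eqref{1.10intro} for $\frac{d\eta}{dx}$. The charge term $\frac{Q_2^2}{xr_2^2}$ is controlled by Lemma \ref{bound for charge} as $\leq\frac{\omega}{2x}\eta$; the $D_u\phi$ difference term is handled by writing $r_2^2|D_u\phi_2|^2 - \frac{\Omega_1^{-2}\partial_vr_1}{\Omega_2^{-2}\partial_vr_2}r_1^2|D_u\phi_1|^2$ in terms of $\Theta^2$ plus a cross term, using $\Theta = r_2|D_u\phi_2|-r_1|D_u\phi_1|$, Lemma \ref{keylemma1} for $\Theta^2$, and Lemma \ref{keylemma2} for the ratio $\frac{\Omega_2^{-2}\partial_vr_2}{\Omega_1^{-2}\partial_vr_1}\leq e^{-(1-\frac{\omega}{2})\eta}\leq 1$. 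Using the bound $\delta(u)\leq\frac{\delta_0}{x(1+\delta_0)-\delta_0}$ from Proposition \ref{mixed derivatives of r} to convert $\frac{1}{r_1}-\frac{1}{r_2}$ and $\frac{m_2-m_1}{r_2}=\eta$ expressions into functions of $x$ and $\delta_0$, this should yield a closed differential inequality of the schematic form
\begin{align*}
\frac{d\eta}{dx}\leq -\frac{1-\frac{\omega}{2}}{x}\,\eta\bigg(1-\frac{\delta_0}{x(1+\delta_0)-\delta_0}\bigg)+\frac{1+\frac{\omega}{2}}{x}\cdot\frac{\delta_0}{x(1+\delta_0)-\delta_0}+\frac{\omega}{2x}\eta,
\end{align*}
valid on $[x'',1]$, analogous to \eqref{diffineq} but with the extra charge-induced $\frac{\omega}{2x}\eta$ term absorbed into the coefficient of $\frac{\eta}{x}$.

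Then I would solve this linear ODE inequality by the integrating factor method (integrating from $x$ up to $1$ with $\eta(1)=\eta_0$). The integrating factor is essentially $x^{-(1-\frac{\omega}{2})-\frac{\omega}{2}}$ times a power of $(x(1+\delta_0)-\delta_0)$, and integrating the inhomogeneous term produces exactly the function $g_\omega(x)$ in \eqref{g omega x} — this is where the specific algebraic form of $g_\omega$, with its $x^{1-\frac{\omega}{2}}$, $x$, and $x^2$ terms and the $\frac{1}{(1+x)^2}$ prefactor, comes from. The resulting lower bound reads $\eta(x)\geq (\eta_0 - g_\omega(\delta_0))\cdot(\text{positive factor}) \geq \eta_0 - g_\omega(\delta_0) \geq \frac{13\epsilon}{\omega} > \frac{12\epsilon}{\omega}$ strictly (after checking the "positive factor" is $\geq 1$ or handling it carefully via monotonicity in $x$). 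Since the inequality $\eta(x'') > \frac{12\epsilon}{\omega}$ is strict, continuity of $\eta$ extends the region slightly below $x''$, contradicting the infimum definition unless $x''=x_*$. This closes the bootstrap.

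The main obstacle I anticipate is the bookkeeping in step two: correctly expanding the $D_u\phi$ difference term in \eqref{1.10intro} into $\Theta^2$ plus cross terms and showing the cross terms have a favorable sign (or can be absorbed), since a careless sign there would break the whole estimate; and then verifying that the messy integration in step three genuinely reproduces $g_\omega$ and that the "positive multiplicative factor" coming from the integrating factor does not degrade the bound below $\frac{12\epsilon}{\omega}$. A secondary technical point is confirming $g_\omega(\delta_0)\geq 0$ (equivalently that the hypothesis $\eta_0 \geq \frac{13\epsilon}{\omega}+g_\omega(\delta_0)$ is consistent and gives $\eta_0 \geq \frac{13\epsilon}{\omega}$ at the start), which should follow from $0<\delta_0\leq\frac12$ and $0<\omega<\frac23$ but deserves an explicit check.
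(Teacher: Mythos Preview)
Your plan is correct and follows the paper's proof essentially step for step: bootstrap in $x$, feed Lemmas~\ref{bound for charge}--\ref{keylemma2} into the identity for $\frac{d\eta}{dx}$, integrate the resulting linear inequality with an integrating factor to produce $F(x)\leq g_\omega(\delta_0)$, and use $e^{G(x)}\geq 1$ on $[x_*,1]$ (which holds because $\omega<\frac{2}{3}$ makes $x^2/(x(1+\delta_0)-\delta_0)^{1+\omega/2}$ increasing with value $1$ at $x=1$) to get the strict improvement $\eta\geq\frac{13\epsilon}{\omega}$. The ``main obstacle'' you flag is resolved in the paper by the elementary quadratic bound
\[
\Theta^2+2\Theta\, r_1|D_u\phi_1|+(1-e^{(1-\frac{\omega}{2})\eta})r_1^2|D_u\phi_1|^2\;\leq\;\Big(1+\tfrac{1}{e^{(1-\omega/2)\eta}-1}\Big)\Theta^2\;\leq\;\Big(1+\tfrac{1}{(1-\omega/2)\eta}\Big)\Theta^2,
\]
which combined with Lemma~\ref{keylemma1} gives the factor $(1+\tfrac{\omega}{2})(1+\tfrac{1}{(1-\omega/2)\eta})$ and hence the $\frac{1+\omega/2}{1-\omega/2}$ coefficient on the inhomogeneous term (your schematic coefficients are slightly off for this reason, but once corrected the integration reproduces $g_\omega$ exactly).
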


\begin{proof}
	We define $u' := \sup\{u\in [u_0,u_*]\big|\eta(s)\geq\frac{12\epsilon}{\omega}\text{ for all } s\in [u_0,{\color{black}u}]\}$. We are going to show that $u' = u_*$, where $x(u_*)=\frac{3\delta_0}{1+\delta_0}$. {\color{black}A sketch of this is provided in figure \ref{fig:graphcomplex}.}
	
	We calculate $\frac{d\eta}{dx}$. The following computations holds for all $u\in [u_0,u_*]$: 
	
	\begin{align}\label{inequalities}
	&\frac{d\eta}{dx}= \frac{d\eta}{du}\bigg/\frac{dx}{du} = \frac{r_2(u_0)}{\partial_ur_2}\bigg(-\frac{2\partial_ur_2}{r_2^2}(m_2-m_1)+\frac{2}{r_2}\partial_u(m_2-m_1)\bigg)\nonumber\\
	=&-\frac{\eta}{x}+\frac{2}{x\partial_ur_2}(-8\pi r_2^2\Omega_2^{-2}\partial_vr_2|D_u\phi_2|^2+8\pi r_1^2\Omega_1^{-2}\partial_vr_1|D_u\phi_1|^2+\frac{Q_2^2\partial_ur_2}{2r_2^2}-\frac{Q_1^2\partial_ur_1}{2r_1^2})\nonumber\\
	{\color{black}\leq}&-\frac{\eta}{x}-\frac{16\pi\partial_vr_2\Omega_2^{-2}}{x\partial_ur_2}\bigg(r_2^2|D_u\phi_2|^2-\frac{\Omega_1^{-2}\partial_vr_1}{\Omega_2^{-2}\partial_vr_2}r_1^2|D_u\phi_1|^2\bigg)+\frac{Q_2^2}{xr_2^2},\nonumber\\
	&{\color{black}\hspace{0.5cm}\text{where we used that } \frac{Q_1^2\partial_ur_1}{xr_1^2\partial_ur_2}\geq 0.}
       \end{align}
	
 	Now we focus our attention on the region $[u_0,u']$. Since $\eta\geq \frac{12\epsilon}{\omega}\geq\frac{8\epsilon}{\omega}$ in $[x',1]$, we can use Lemma $\eqref{keylemma2}$ to bound the factor in the second term:
	\begin{align*}
	r_2^2|D_u\phi_2|^2-\frac{\Omega_1^{-2}\partial_vr_1}{\Omega_2^{-2}\partial_vr_2}r_1^2|D_u\phi_1|^2&\leq r_2^2|D_u\phi_2|^2-e^{\eta(1-\frac{\omega}{2})} r_1^2|D_u\phi_1|^2\\
	&=\Theta^2+2\Theta|D_u\phi_1|r_1+(1-e^{\eta(1-\frac{\omega}{2})})r_1^2|D_u\phi_1|^2.
	\end{align*}
	The last expression, being a quadratic in $\Theta$, can be bounded by a monic quadratic polynomial in $\Theta$:
	
	\begin{align}\label{thetaquadraticbound}
	\Theta^2+2\Theta|D_u\phi_1|r_1+(1-e^{\eta(1-\frac{\omega}{2})})r_1^2|D_u\phi_1|^2&\leq\bigg(1+\frac{1}{e^{\eta(1-\frac{\omega}{2})}-1}\bigg)\Theta^2\nonumber\\
	&\leq\bigg(1+\frac{1}{\eta(1-\frac{\omega}{2})}\bigg)\Theta^2,
	\end{align}
	where we used the fact that $\eta(1-\frac{\omega}{2})\geq 0$ in the second inequality. Then $\eqref{thetaquadraticbound}$ {\color{black}combined with $\eqref{inequalities}$ gives}:
	\begin{align*}
	\frac{d\eta}{dx}\leq-\frac{\eta}{x}-\frac{16\pi\partial_vr_2\Omega_2^{-2}}{x\partial_ur_2}\bigg(1+\frac{1}{\eta(1-\frac{\omega}{2})}\bigg)\Theta^2+\frac{Q_2^2}{xr_2^2}.
	\end{align*}
	Applying Lemma \ref{keylemma1}, we have 
	\begin{align}\label{inequalities2}
	\frac{d\eta}{dx}&\leq-\frac{\eta}{x}+\frac{\eta}{x}\bigg(1+\frac{\omega}{2}\bigg)\bigg(1+\frac{1}{\eta(1-\frac{\omega}{2})}\bigg)\bigg(\frac{r_2}{r_1}-1\bigg)+\frac{Q_2^2}{xr_2^2}.
	\end{align}
	Using Proposition \ref{mixed derivatives of r}, we get
	\begin{align*}
	\delta(u) = \frac{r_2(u)-r_1(u)}{r_2(u)-(r_2(u)-r_1(u))}\leq\frac{r_2(u_0)-r_1(u_0)}{r_2(u)-(r_2(u_0)-r_1(u_0))}=\frac{\delta_0}{x(u)(1+\delta_0)-\delta_0}.
	\end{align*} 
	Combining with $\eqref{inequalities2}$, and using Lemma $\eqref{bound for charge}$ to bound the term involving $Q$, we obtain
	\begin{align*}
	\frac{d\eta}{dx}&\leq\eta\bigg(\big(1+\frac{\omega}{2}\big)\frac{\delta_0}{x^{{\color{black}2}}(1+\delta_0)-{\color{black}x}\delta_0}-\frac{1}{x}\bigg)+\frac{1+\frac{\omega}{2}}{1-\frac{\omega}{2}}{\color{black}\frac{1}{x}}\frac{\delta_0}{x(1+\delta_0)-\delta_0}+\frac{Q_2^2}{xr_2^2}\\
	&\leq\eta\bigg(\big(1+\frac{\omega}{2}\big)\frac{\delta_0}{x^{{\color{black}2}}(1+\delta_0)-{\color{black}x}\delta_0}-\frac{1}{x}\bigg)+\frac{1+\frac{\omega}{2}}{1-\frac{\omega}{2}}{\color{black}\frac{1}{x}}\frac{\delta_0}{x(1+\delta_0)-\delta_0}+\frac{\eta}{x}\frac{\omega}{2}\\
	&=-\frac{\eta}{x}\bigg(1-\frac{\omega}{2}-\big(1+\frac{\omega}{2}\big)\frac{\delta_0}{x(1+\delta_0)-\delta_0}\bigg)+\frac{1+\frac{\omega}{2}}{1-\frac{\omega}{2}}\frac{1}{x}\frac{\delta_0}{x(1+\delta_0)-\delta_0}.
	\end{align*}
	\\
	Defining $g(x):=1-\frac{\omega}{2}-\big(1+\frac{\omega}{2}\big)\frac{\delta_0}{x(1+\delta_0)-\delta_0}$ and $f(x):=\frac{1+\frac{\omega}{2}}{1-\frac{\omega}{2}}\frac{\delta_0}{x(1+\delta_0)-\delta_0}$, we obtain the following differential inequality which holds for all $x\in [x',1]$:
	\begin{align*}
	\frac{d\eta}{dx}+\eta\frac{g(x)}{x}-\frac{f(x)}{x}\leq 0.
	\end{align*}
	To solve this differential inequality, we multiply by an integrating factor {\color{black}to get}:
	\begin{gather*}
	\frac{d}{dx}\bigg(e^{-\int_x^1\frac{g(s)}{s}ds}\eta(x)\bigg)-e^{-\int_x^1\frac{g(s)}{s}ds}\frac{f(x)}{x}\leq 0\\
	\implies\bigg[e^{-\int_{t}^1\frac{g(s)}{s}ds}\eta(t)\bigg]_{t=x}^{t=1}\leq \int_x^1e^{-\int_{t}^1\frac{g(s)}{s}ds}\frac{f}{t}dt+C,
	\end{gather*}
	where $C$ can be chosen to be any value which makes the inequality hold at the initial point $x=1$. Also denote $G(x):= \int_x^1\frac{g(s)}{s}ds$ and $F(x):=\int_x^1e^{-G(s)}\frac{f}{s}ds$. In this notation, we get
	\begin{align*}
	\eta_0-e^{-G(x)}\eta(x)\leq F(x)+C. 
	\end{align*}
	Since $\eta_0 = \eta(x)|_{x=1}$ by definition, and $F(1) = G(1) = 0$, setting $C = 0$ makes the inequality {\color{black}tight}. Hence, in the {\color{black}interval $[x',1]$}, we conclude that the following inequality holds:
	\begin{align}\label{maininequality}
	\eta_0-e^{-G(x)}\eta(x)\leq F(x)
	\end{align}
	
Now {\color{black}we} compute explicit expressions for $G(x)$ and $F(x)$:

\begin{align*}
G(x) &= \int_x^1\frac{1-\frac{\omega}{2}}{s}-\frac{1+\frac{\omega}{2}}{s}\frac{\delta_0}{s(1+\delta_0)-\delta_0}ds\\
&=\int_x^1\frac{1-\frac{\omega}{2}}{s}+\frac{1+\frac{\omega}{2}}{s}-\frac{(1+\frac{\omega}{2})(1+\delta_0)}{s(1+\delta_0)-\delta_0}ds\\
&=\ln\bigg(\frac{s^2}{\big(s(1+\delta_0)-\delta_0\big)^{1+\frac{\omega}{2}}}\bigg)\bigg|^1_x = \ln\bigg(\frac{\big(x(1+\delta_0)-\delta_0\big)^{1+\frac{\omega}{2}}}{x^2}\bigg).
\end{align*}

\begin{align*}
&\,\,F(x)= \int_x^1\frac{s^2}{\big(s(1+\delta_0)-\delta_0\big)^{1+\frac{\omega}{2}}}\frac{f}{s}ds = \int_x^1\frac{1+\frac{\omega}{2}}{1-\frac{\omega}{2}}\frac{\delta_0s}{(s(1+\delta_0)-\delta_0)^{2+\frac{\omega}{2}}}ds\\
&=\frac{1+\frac{\omega}{2}}{1-\frac{\omega}{2}}\frac{\delta_0}{1+\delta_0}\int_x^1\frac{s(1+\delta_0)-\delta_0}{(s(1+\delta_0)-\delta_0)^{2+\frac{\omega}{2}}}+\frac{\delta_0}{(s(1+\delta_0)-\delta_0)^{2+\frac{\omega}{2}}}ds\\
&=\frac{1+\frac{\omega}{2}}{1-\frac{\omega}{2}}\frac{\delta_0}{1+\delta_0}\int_x^1\frac{1}{(s(1+\delta_0)-\delta_0)^{1+\frac{\omega}{2}}}+\frac{\delta_0}{(s(1+\delta_0)-\delta_0)^{2+\frac{\omega}{2}}}ds\\
&=\frac{1+\frac{\omega}{2}}{1-\frac{\omega}{2}}\frac{\delta_0}{(1+\delta_0)^2}\bigg[-\frac{2}{\omega}\frac{1}{\big(s(1+\delta_0)-\delta_0\big)^\frac{\omega}{2}}-\frac{1}{1+\frac{\omega}{2}}\frac{\delta_0}{\big(s(1+\delta_0)-\delta_0\big)^{1+\frac{\omega}{2}}}\bigg]\bigg|_x^1\\
&=\frac{1+\frac{\omega}{2}}{1-\frac{\omega}{2}}\frac{\delta_0}{(1+\delta_0)^2}\bigg(\frac{2}{\omega}\frac{1}{\big(x(1+\delta_0)-\delta_0\big)^\frac{\omega}{2}}-\frac{2}{\omega}+\frac{1}{1+\frac{\omega}{2}}\frac{\delta_0}{\big(x(1+\delta_0)-\delta_0\big)^{1+\frac{\omega}{2}}}-\frac{\delta_0}{1+\frac{\omega}{2}}\bigg).
\end{align*}
Observe that $F(x)$ is monotonically decreasing and hence obtains its maximum at $x=\frac{3\delta_0}{1+\delta_0}$ on the interval $[\frac{3\delta_0}{1+\delta_0},1]$. Therefore,
\begin{align*}
F(x)\leq F\bigg(\frac{3\delta_0}{1+\delta_0}\bigg)&=\frac{1+\frac{\omega}{2}}{1-\frac{\omega}{2}}\frac{\delta_0}{(1+\delta_0)^2}\bigg(\frac{2^{1-\frac{\omega}{2}}}{\omega}\delta_0^{-\frac{\omega}{2}}-\frac{2}{\omega}+\frac{1}{2^{1+\frac{\omega}{2}}(1+\frac{\omega}{2})}\delta_0^{-\frac{\omega}{2}}-\frac{\delta_0}{1+\frac{\omega}{2}}\bigg)\\
&=\frac{1+\frac{\omega}{2}}{1-\frac{\omega}{2}}\frac{1}{(1+\delta_0)^2}\bigg(\bigg(\frac{2^{1-\frac{\omega}{2}}}{\omega}+\frac{1}{2^{1+\frac{\omega}{2}}(1+\frac{\omega}{2})}\bigg)\delta_0^{1-\frac{\omega}{2}}-\frac{2}{\omega}\delta_0-\frac{1}{1+\frac{\omega}{2}}\delta_0^2\bigg)\\
&=g_\omega(\delta_0).
\end{align*}
Substituting the expressions for $F(x)$ and $G(x)$ into \eqref{maininequality}, for all $x\in [x',1]$, we get
{\color{black}\begin{align}\label{gronwall1}
&\eta(x)\geq e^{G(x)}\big(\eta_0-F(x)\big)\nonumber\\
\geq&\frac{\big(x(1+\delta_0)-\delta_0\big)^{1+\frac{\omega}{2}}}{x^2}\nonumber\\
&\quad\cdot\bigg(\eta_0-\frac{1+\frac{\omega}{2}}{1-\frac{\omega}{2}}\frac{1}{(1+\delta_0)^2}\bigg(\bigg(\frac{2^{1-\frac{\omega}{2}}}{\omega}+\frac{1}{2^{1+\frac{\omega}{2}}(1+\frac{\omega}{2})}\bigg)\delta_0^{1-\frac{\omega}{2}}-\frac{2}{\omega}\delta_0-\frac{1}{1+\frac{\omega}{2}}\delta_0^2\bigg)\bigg)\nonumber\\
=&\frac{\big(x(1+\delta_0)-\delta_0\big)^{1+\frac{\omega}{2}}}{x^2}\cdot \big(\eta_0-g_{\o}(\d_0)\big).
\end{align}}
For the last identity, we use the definition of $g_{\o}(x)$ in \eqref{g omega x}. 

{\color{black}Since $\omega<\frac{2}{3}$ implies that $\frac{x^2}{(x(1+\delta_0)-\delta_0)^{1+\frac{\omega}{2}}}$ is monotonically increasing}, we get:
\begin{align*}
\sup_{x\in [x_*,1]}\frac{x^2}{\big(x(1+\delta_0)-\delta_0\big)^{1+\frac{\omega}{2}}} = 1.
\end{align*}
Combining this with the hypothesis that
\begin{align*}
\eta_0>\frac{13\epsilon}{\omega}+g_\omega(\delta_0),
\end{align*}
we {\color{black}obtain} the inequality: 
\begin{align*}
\eta_0 &>\frac{13\epsilon}{\omega}+g_\omega(\delta_0)\geq\frac{13\epsilon}{\omega}\frac{x^2}{\big(x(1+\delta_0)-\delta_0\big)^{1+\frac{\omega}{2}}}+g_\omega(\delta_0)
\end{align*}
for $x\in[x',1]$. Substituting the above into \eqref{gronwall1} gives us:
\begin{align*}
\eta(x)\geq\frac{13\epsilon}{\omega},\hspace{0.5cm}\text{for all } x\in[x',1].
\end{align*}
However, by the continuity of $\eta(x)$, we can find $x''<x'$ such that $\eta(x)\geq \frac{12\epsilon}{\omega}$ for all $x\in [x'',1]$, i.e. $\eta(u)\geq\frac{12\epsilon}{\omega}$ for all $u\in [u_0,u'']$, contradicting the supremum property of $u'$. 
\end{proof} 

We are now ready to prove the main theorem of this paper.
\begin{proof}{\textit{(Theorem \ref{thm1.3})}}
	We prove the theorem by contradiction. Suppose that $\mathcal{R}$ contains no trapped surfaces {\color{black}or MOTS}, in particular $\partial_vr_2>0$ for $u\in [u_0,u_*]$. Then Lemma \ref{keylemma1} applies and $\eqref{maininequality}$ holds for $x\in[x_*,1]$. Rearranging $\eqref{maininequality}$ gives us
	\begin{align*}
	\eta_0\leq e^{-G(x)}\eta(x)+F(x)< e^{-G(x)}+F(x), \text{ for all } x\in[x_*,1]
	\end{align*}
	where we used the assumption that $\eta(x) = \frac{2(m_2-m_1)}{r_2}\leq\frac{2m_2}{r_2}<1$ in the second inequality. In particular, by letting $x = x_*=\frac{3\delta_0}{1+\delta_0}$, we get:
	\begin{align*}
	e^{-G(x_*)}+F(x_*)&\leq \frac{x_*^2}{\big(x_*(1+\delta_0)-\delta_0\big)^{1+\frac{\omega}{2}}}+g_\omega(\delta_0)\\
	&=\frac{9}{2^{1+\frac{\omega}{2}}(1+\delta_0)^2}\delta_0^{1-\frac{\omega}{2}}+g_\omega(\delta_0),
	\end{align*}
	and hence $\eta_0<\frac{9}{2^{1+\frac{\omega}{2}}(1+\delta_0)^2}\delta_0^{1-\frac{\omega}{2}}+g_\omega(\delta_0)$, giving us the desired contradiction.
\end{proof}

\section{Appendix}
\subsection{Trapped Surface Formation for the Einstein Scalar Field}
Here we provide a proof of Christodolou's sharp trapped surface formation criterion as in \cite{Chr.1}. In the case for the real scalar field, the system of equations \eqref{EMS1} to \eqref{EMS9} is reduced to
\begin{gather}
r\partial_v\partial_ur+\partial_vr\partial_ur=-\frac{\Omega^2}{4}\label{UEMS1},\\
\partial_u(\Omega^{-2}\partial_ur) = -4\pi r\Omega^{-2}|\partial_u\phi|^2\label{UEMS2},\\
\partial_v(\Omega^{-2}\partial_vr) = -4\pi r\Omega^{-2}|\partial_v\phi|^2\label{UEMS3},\\
r\partial_u\partial_v\phi+\partial_ur\partial_v\phi+\partial_vr\partial_u\phi=0\label{UEMS4}.
\end{gather}

Also, the derivatives of the Hawking mass become:
\begin{align}
\partial_um= =-8\pi r^2\Omega^{-2}\partial_vr|\partial_u\phi|^2\label{massureal},\\
\partial_vm = -8\pi r^2\Omega^{-2}\partial_ur|\partial_v\phi|^2\label{massvreal}.
\end{align}

For convenience, we restate theorem $\ref{thm1.1}$ here.

\begin{customthm}{1.1}
	Define the function
	\begin{align*}
	E(x):=\frac{x}{(1+x)^2}\bigg[\ln\bigg(\frac{1}{2x}\bigg)+5-x\bigg].
	\end{align*}
	Consider the system (\ref{ES}) with characteristic initial data along $u=u_0$ and $v=v_1$. For initial mass input $\eta_0$ along $u=u_0$,  if the following lower bound holds:
	\begin{align*}
	\eta_0>E(\delta_0),
	\end{align*}
	then a trapped surface {\color{black}$S_{u,v}$, with properties $\partial_v r(u,v)<0$} and $\partial_u r(u,v)< 0$, {\color{black}forms} in the region $[u_0,u_*]\times[v_1,v_2]\subset\mathcal{R}$.
\end{customthm}

In this section, we will first give a few technical estimates to the dynamical quantities in the strip $[u_0,0]\times[v_1,v_2]$. These will be used in the proof for Theorem \ref{thm1.1}.
We start off by showing that $\partial_ur$ is negative and bounded away from 0.
\begin{lemma}\label{negativeincoming}
	$\partial_ur\leq -\frac{1}{2}\Omega^2$ everywhere in {\color{black}$\mathcal{D}(0,v_1)\cup\big([u_0,0]\times[v_1,\infty)\big)$}
	\begin{proof}
		Rewrite $\eqref{UEMS1}$ as
		\begin{align*}
		\partial_v\big(r\partial_ur\big)=-\frac{\Omega^2}{4}.
		\end{align*}
		\noindent {\color{black}Note that $\Omega^2=1$ along ${\color{black}C}$.}	Integrating both sides from $0$ to $v$ and dividing by $r$:
		\begin{align}
		-\frac{v}{4r(u_0,v)} = \partial_ur(u_0,v)\label{partialur}.
		\end{align}
		Setting $v = 0 $ in the above gives us
		\begin{align}\label{boundoutgoing}
		-\frac{1}{4\partial_vr(u_0,0)}=\partial_ur(u_0,0) = -\partial_vr(u_0,0)\implies\partial_vr(u_0,0)=\frac{1}{2}.
		\end{align}
		
		\noindent	Since $\Omega^2 = 1$ on $C$ as well, $\eqref{UEMS3}$ gives us that $\partial_v\partial_vr\leq 0$, i.e. $r$ is concave with respect to $v$. Combining this with the fact that $r(u_0,0) = v(u_0,0) = 0$, we have: 
		\begin{align*}
		\frac{r}{v}(u_0,v)\leq\partial_vr(u_0,0).
		\end{align*}
		
		\noindent	Hence, 
		\begin{align*}
		\frac{r}{v}(u_0,v)\leq\frac{1}{2}.
		\end{align*}
		
		\noindent	Substitute this into \eqref{partialur}, we get
		\begin{align*}
		\partial_ur(u_0,v)\leq -\frac{1}{2}.
		\end{align*}
		
		\noindent	By $\eqref{UEMS3}$, $\Omega^{-2}\partial_ur$ is decreasing along incoming null geodesics. Hence for a general point in {\color{black}$\mathcal{D}(0,v_1)\cup\big([u_0,0]\times[v_1,\infty)\big)$}, we have $\Omega^{-2}\partial_ur\leq-\frac{1}{2}$.
	\end{proof}
\end{lemma}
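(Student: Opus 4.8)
The plan is to run the argument of the charged analogue Lemma~\ref{negativeincomingcharged} with $\mathfrak{e}=0$ (equivalently $\epsilon=0$), i.e. to first establish the bound on the initial outgoing cone $C=\{u=u_0\}$ and then propagate it by a Raychaudhuri monotonicity. First I would note that \eqref{UEMS2} reads $\partial_u(\Omega^{-2}\partial_u r)=-4\pi r\Omega^{-2}|\partial_u\phi|^2\le 0$, so $\Omega^{-2}\partial_u r$ is non-increasing in $u$ along each incoming null geodesic $\{v=\text{const}\}$. Every point of $\mathcal{D}(0,v_1)\cup\big([u_0,0]\times[v_1,\infty)\big)$ lies on such a geodesic, which meets $C$ at the point with the same $v$-coordinate (where also $\Omega^2\equiv 1$); hence it suffices to prove $\Omega^{-2}\partial_u r\le-\tfrac12$ on $C$.

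On $C$ we have $\Omega^2\equiv 1$, so \eqref{UEMS1} becomes $\partial_v(r\partial_u r)=-\tfrac14$. Integrating in $v$ from the vertex $(u_0,0)\in\Gamma$, where $r=0$ (and $r\partial_u r\to 0$), gives $r\partial_u r(u_0,v)=-\tfrac{v}{4}$, hence $\partial_u r(u_0,v)=-\tfrac{v}{4r(u_0,v)}$. Thus the desired bound on $C$ is equivalent to the estimate $r(u_0,v)\le\tfrac{v}{2}$. To obtain this, I would use \eqref{UEMS3}, which on $C$ says $\partial_v^2 r=-4\pi r|\partial_v\phi|^2\le 0$: so $v\mapsto r(u_0,v)$ is concave with $r(u_0,0)=0$, and concavity yields $r(u_0,v)/v\le\partial_v r(u_0,0)$. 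Finally, letting $v\to 0$ in $\partial_u r(u_0,v)=-v/(4r(u_0,v))$ together with $r(u_0,v)=\partial_v r(u_0,0)\,v+o(v)$ gives $\partial_u r(u_0,0)=-1/(4\partial_v r(u_0,0))$, which with the axis condition $\partial_v r(u_0,0)=-\partial_u r(u_0,0)$ forces $\partial_v r(u_0,0)^2=\tfrac14$, so $\partial_v r(u_0,0)=\tfrac12$ (positive, as $r$ increases away from the centre). Hence $r(u_0,v)\le\tfrac{v}{2}$ and $\partial_u r(u_0,v)\le-\tfrac12=-\tfrac12\Omega^2$ on $C$; combining with the monotonicity of $\Omega^{-2}\partial_u r$ along incoming geodesics gives the claim on the whole region.

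The only delicate point — and the step I would be most careful to justify — is the behaviour at the centre of symmetry $\Gamma$: namely that $r\partial_u r\to 0$ as $r\to 0$ along $C$, so the constant of integration vanishes, and that $r(u_0,v)=\partial_v r(u_0,0)\,v+o(v)$ near the vertex, so the limit identifying $\partial_v r(u_0,0)$ is legitimate. Both follow from the regularity and gauge normalisations imposed at $\Gamma$ in the set-up ($r=0$ and $\partial_v r(u_0,0)=-\partial_u r(u_0,0)$ on $\Gamma$, $\Omega^2\equiv 1$ on $C$). Everything else is an integration of the characteristic constraints \eqref{UEMS1} and \eqref{UEMS3} along $C$, followed by one application of the incoming Raychaudhuri equation \eqref{UEMS2}; indeed the whole proof is the $\epsilon=0$ case of the already-established Lemma~\ref{negativeincomingcharged}.
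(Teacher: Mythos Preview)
Your proposal is correct and follows essentially the same approach as the paper's proof: integrate \eqref{UEMS1} along $C$ from the vertex, use concavity of $r$ in $v$ from \eqref{UEMS3} together with the axis condition to get $\partial_v r(u_0,0)=\tfrac12$ and hence $\partial_u r\le-\tfrac12$ on $C$, then propagate via the Raychaudhuri monotonicity. In fact you cite the correct equation \eqref{UEMS2} for the final monotonicity step (the paper's reference to \eqref{UEMS3} there is a typo), and you are more explicit about the vertex limit than the paper is.
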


\begin{remark}
	$m(u,v)\geq 0$ for all {\color{black}$(u,v)\in \mathcal{D}(0,v_1)\cup\big([u_0,0]\times[v_1,\infty)\big)$}
\end{remark}
\begin{proof}
	
	Given any point $(u,v)\in \mathcal{R}$, we can extend the outgoing null geodesic backwards until it intersects $\Gamma$ at some coordinate $(u,v_c)$, so that $r(u,v_c) = 0$. Using $\eqref{massvreal}$, we have
	\begin{align*}
	\partial_vm = -8\pi r^2\Omega^{-2}\partial_ur|\partial_v\phi|^2,
	\end{align*}
	
	\noindent	and since $\partial_ur\leq 0$ by Lemma $\ref{negativeincoming}$, we get that $\partial_vm\geq 0$. Combining with the fact that $m(u,v_c) = 0$, we obtain the desired result. 
\end{proof}
Next, we show that the mixed derivative of $r$ is always negative. This places a upper bound on the growth on the ratio $\frac{r_2}{r_1}$.
\begin{proposition}\label{mixed derivatives of r real}
	Assume that $\mathcal{D}(0,v_1)\cup\mathcal{R}$ is free of trapped surfaces. Then i) $\partial_u\partial_vr \leq 0$ in $\mathcal{R}$ and ii) $\delta(x):=\frac{r_2}{r_1}-1\leq\frac{1}{2}$ for $u\in[u_0,u_*]$.
\end{proposition}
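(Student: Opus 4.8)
The plan is to follow the skeleton of the proof of Proposition~\ref{mixed derivatives of r}, but to observe that in the uncharged case the argument simplifies considerably: the sign of $\partial_u\partial_v r$ is governed directly by the positivity of the Hawking mass, so no bootstrap is needed. First I would put \eqref{UEMS1} into ``mass form''. Solving $m=\frac r2\big(1+4\Omega^{-2}\partial_u r\,\partial_v r\big)$ for $\partial_u r\,\partial_v r$ and substituting into \eqref{UEMS1} gives
\begin{align*}
r\,\partial_u\partial_v r=-\frac{\Omega^2}{4}-\partial_u r\,\partial_v r=-\frac{\Omega^2}{4}\cdot\frac{2m}{r},\qquad\text{i.e.}\qquad \partial_u\partial_v r=-\frac{\Omega^2 m}{2r^2}.
\end{align*}
Since $\mathcal{D}(0,v_1)\cup\mathcal{R}$ is free of trapped surfaces, the Remark preceding this proposition gives $m\ge 0$ on $\mathcal{R}$, and as $\Omega^2>0$ we conclude $\partial_u\partial_v r\le 0$ throughout $\mathcal{R}$. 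This is part (i).

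For part (ii) I would integrate. Integrating $\partial_u(\partial_v r)\le 0$ in $u$ over $[u_0,u]$ yields $\partial_v r(u,v)\le\partial_v r(u_0,v)$, and integrating this in $v$ over $[v_1,v_2]$ gives $r_2(u)-r_1(u)\le r_2(u_0)-r_1(u_0)=\frac{\delta_0}{1+\delta_0}\,r_2(u_0)$. Writing $\delta(u)=\frac{r_2-r_1}{r_2-(r_2-r_1)}$ and noting that $t\mapsto\frac{t}{r_2(u)-t}$ is increasing for $0<t<r_2(u)$, together with $r_2(u)=x(u)\,r_2(u_0)$, this produces
\begin{align*}
\delta(u)\le\frac{\delta_0}{x(u)(1+\delta_0)-\delta_0}.
\end{align*}
By Lemma~\ref{negativeincoming}, $\partial_u r<0$, so $x$ is strictly decreasing and $x(u)\ge x(u_*)=\frac{3\delta_0}{1+\delta_0}$ for $u\in[u_0,u_*]$; substituting gives $x(u)(1+\delta_0)-\delta_0\ge 2\delta_0>0$, whence $\delta(u)\le\tfrac12$, which is part (ii).

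The computation is routine once the mass form of \eqref{UEMS1} is in hand, and there is no real obstacle here. The one conceptual point worth flagging is that the inequality $\tfrac{2m}{r}\ge\tfrac{Q^2}{r}$ that was needed in the charged case---and whose verification forced the circular interplay between ``$\delta\le\tfrac12$'' and ``$\partial_u\partial_v r\le 0$'', hence the bootstrap, in Proposition~\ref{mixed derivatives of r}---is here replaced by the unconditional bound $m\ge 0$, so the proof is direct.
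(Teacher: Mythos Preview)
Your proof is correct and follows essentially the same route as the paper's own argument: rewrite \eqref{UEMS1} in mass form $\partial_u\partial_v r=-\frac{\Omega^2 m}{2r^2}$, use $m\ge 0$ to get part~(i), then integrate twice and bound $\delta(u)$ via $x(u)\ge x(u_*)=\frac{3\delta_0}{1+\delta_0}$ to get part~(ii). Your closing remark explaining why the bootstrap of Proposition~\ref{mixed derivatives of r} disappears in the uncharged case is also accurate.
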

\begin{proof}
	We rewrite \eqref{UEMS1} into the following equivalent form: 
	\begin{align}\label{UEMS1mass}
	\partial_u\partial_vr=-\frac{\Omega^2}{{\color{black}2}}\frac{m}{r^2}.
	\end{align}
	Since $m\geq 0$, the right side of the above equation is {\color{black}non-positive}. This proves the first part of the lemma.\\
	
	Integrating with respect to $u$, we get:
	
	\begin{align*}
	\partial_vr(u)-\partial_vr(u_0){\color{black}\leq 0}\implies\partial_vr(u)\leq\partial_vr(u_0).
	\end{align*}
	Integrating the above inequality with respect to $v$,
	\begin{align*}
	r_2(u)-r_1(u)\leq r_2(u_0)-r_1(u_0), \text{for all } u\in [u_0,u_*].
	\end{align*}
	Hence, we can use the above inequality to compute a bound for $\delta(u)$:
	\begin{equation}\label{delta inequality}
	\begin{split}
	\delta(u) = \frac{r_2}{r_1}-1&=\frac{r_2-r_1}{r_2-(r_2-r_1)}\leq\frac{r_2(u_0)-r_1(u_0)}{r_2(u)-(r_2(u_0)-r_1(u_0))}\\
	&\leq\frac{\delta_0}{\frac{r_2(u)}{r_1(u_0)}-\delta_0}=\frac{\delta_0}{x(u)(1+\delta_0)-\delta_0},\hspace{0.5cm}\text{ for all }u\in[u_0,u_*],
	\end{split}
	\end{equation}
	where $x(u):=r_2(u)/r_2(u_0)$.	{\color{black}Recall $r_2(u_*):=\f{3\delta_0}{1+\delta_0}\cdot r_2(u_0)$ and} since $x(u)$ is monotonically decreasing, we have 
	\begin{align*}
	x(u)\geq x(u_*) = \frac{3\delta_0}{1+\delta_0} \text{ for }u\in [u_0,u_*],
	\end{align*}
	and hence
	\begin{align*}
	\delta(u)\leq\frac{\delta_0}{2\delta_0} = \frac{1}{2}\text{ for all }u\in [u_0,u_*].
	\end{align*}
\end{proof} 

Next we prove two key lemmas. In the first one we  bound the difference in $r\partial_u\phi$ between $v=v_1$ and $v=v_2$. Then in the second we bound the ratio of $\partial_vr$ between $v=v_1$ and $v=v_2$.

\begin{lemma}\label{keylemma1real}
	Define {\color{black}$\Theta:=r_2\partial_u\phi_2-r_1\partial_u\phi_1$}. Suppose that $\mathcal{D}(0,v_1)\cup\mathcal{R}$ is free of trapped surfaces. Then
	\begin{align*}
	\Theta(u)^2\leq \frac{\partial_ur_2}{8\pi\Omega_2^{-2}\partial_vr_2}(m_2-m_1)\bigg(\frac{1}{r_2}-\frac{1}{r_1}\bigg)(u)
	\end{align*}
	for all $u\in [u_0,u_*]$.
\end{lemma}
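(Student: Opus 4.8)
The plan is to run the same scheme as in the charged case (Lemma \ref{keylemma1}), but with all electromagnetic contributions absent, which simplifies the argument considerably. First I would put the scalar wave equation \eqref{UEMS4} in divergence form: since $\partial_v(r\partial_u\phi)=\partial_v r\,\partial_u\phi+r\,\partial_u\partial_v\phi$, equation \eqref{UEMS4} is equivalent to $\partial_v(r\partial_u\phi)=-\partial_u r\,\partial_v\phi$. Integrating this in $v$ from $v_1$ to $v_2$ at fixed $u$ gives
\[
\Theta(u)=r_2\partial_u\phi_2-r_1\partial_u\phi_1=-\int_{v_1}^{v_2}\partial_u r\,\partial_v\phi\,dv.
\]

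Next I would estimate $\Theta^2$ by the Cauchy--Schwarz inequality, splitting the integrand as $(-\partial_u r)\partial_v\phi=\bigl(-8\pi r^2\Omega^{-2}\partial_u r\bigr)^{1/2}|\partial_v\phi|\cdot\bigl(\tfrac{-\partial_u r}{8\pi r^2\Omega^{-2}}\bigr)^{1/2}$, which is legitimate since $\partial_u r<0$ by Lemma \ref{negativeincoming}. This yields
\[
\Theta^2\le\Bigl(\int_{v_1}^{v_2}-8\pi r^2\Omega^{-2}\partial_u r\,|\partial_v\phi|^2\,dv\Bigr)\cdot\Bigl(\int_{v_1}^{v_2}\frac{-\partial_u r}{8\pi r^2\Omega^{-2}}\,dv\Bigr).
\]
By \eqref{massvreal} the first factor equals $\int_{v_1}^{v_2}\partial_v m\,dv=m_2-m_1$.

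For the second factor I would change variables from $v$ to $r$ along the fixed-$u$ segment, which is valid because $\partial_v r>0$ under the no-trapped-surface hypothesis (this also gives $r_1<r_2$); writing $dv=dr/\partial_v r$ the factor becomes $\int_{r_1}^{r_2}\frac{-\partial_u r}{8\pi r^2\Omega^{-2}\partial_v r}\,dr$. Then I would invoke two monotonicity facts: Proposition \ref{mixed derivatives of r real} gives $\partial_u\partial_v r\le 0$, hence $-\partial_u r(u,v)\le-\partial_u r_2(u)$ for $v\le v_2$; and \eqref{UEMS3} gives that $\Omega^{-2}\partial_v r$ is non-increasing in $v$, hence $\Omega^{-2}\partial_v r(u,v)\ge\Omega_2^{-2}\partial_v r_2(u)$. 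Pulling the $v_2$-values out of the integral and evaluating $\int_{r_1}^{r_2}r^{-2}\,dr=\tfrac{1}{r_1}-\tfrac{1}{r_2}$ bounds the second factor by $\frac{-\partial_u r_2}{8\pi\Omega_2^{-2}\partial_v r_2}\bigl(\tfrac{1}{r_1}-\tfrac{1}{r_2}\bigr)$. Multiplying the two bounds and rewriting $\tfrac{1}{r_1}-\tfrac{1}{r_2}=-\bigl(\tfrac{1}{r_2}-\tfrac{1}{r_1}\bigr)$ to match signs produces exactly the claimed estimate.

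The estimate is not hard; the only points requiring a little care are the bookkeeping of signs (three negative factors: $\partial_u r$, $\partial_u r_2$, and $\tfrac{1}{r_2}-\tfrac{1}{r_1}$, so the right-hand side is indeed positive) and noting where the no-trapped-surface assumption enters: directly via $\partial_v r>0$ for the change of variables and the ordering $r_1<r_2$, and indirectly through Proposition \ref{mixed derivatives of r real}. I expect no genuine obstacle, since this is precisely the scalar-field specialization of the harder charged computation --- the quadratic-in-$\Theta$ manipulation and the $\bigl|\int Q\phi\Omega^2/(4r)\,dv\bigr|^2$ term there both simply vanish here.
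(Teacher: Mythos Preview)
Your proposal is correct and follows essentially the same argument as the paper: rewrite \eqref{UEMS4} as $\partial_v(r\partial_u\phi)=-\partial_u r\,\partial_v\phi$, integrate, apply Cauchy--Schwarz with the same $r^2\Omega^{-2}$ splitting, identify the first factor as $m_2-m_1$ via \eqref{massvreal}, and bound the second factor by changing variables to $r$ and invoking Proposition \ref{mixed derivatives of r real} together with \eqref{UEMS3}. The sign bookkeeping and the use of the no-trapped-surface hypothesis are handled exactly as in the paper.
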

\begin{proof}
	We can write the wave equation \eqref{UEMS4} as 
	\begin{align*}
	\partial_v(r\partial_u\phi) = -\partial_ur\partial_v\phi.
	\end{align*}
	By integrating the above equation, we get
	{\color{black}\begin{align}
		\Theta^2 &= \big(r_2\partial_u\phi_2-r_1\partial_u\phi_1\big)^2\nonumber\\&=\bigg|\int_{v_1}^{v_2}-\partial_ur\partial_v\phi dv\bigg|^2\nonumber\leq\bigg(\int_{v_1}^{v_2}-\partial_ur|\partial_v\phi| dv\bigg)^2\nonumber\\
		&\leq\frac{1}{8\pi}\int_{v_1}^{v_2}-8\pi r^2\partial_ur\Omega^{-2}|\partial_v\phi|^2dv\cdot\int_{v_1}^{v_2}-\frac{\partial_ur}{r^2\Omega^{-2}}dv\label{lemma1eqn1real}
		\end{align}}
	where we have applied Holder's inequality for the last inequality.
	
	The first integral can be written in terms of the hawking mass:
	\begin{align}
	\int_{v_1}^{v_2}-8\pi r^2\partial_ur\Omega^{-2}|\partial_v\phi|^2dv&=\int_{v_1}^{v_2}\partial_vm\text{ }dv\nonumber\\
	&=m_2-m_1\label{lemma1eqn2real}.
	\end{align}
	
	To bound the second integral, we apply Proposition $\ref{mixed derivatives of r real}$ to get $\partial_v\partial_ur\leq 0$, and hence $\partial_ur\geq\partial_ur_2$. Also, equation {\color{black}$\eqref{UEMS3}$} implies that $\Omega_2^{-2}\partial_vr_2\leq\Omega^{-2}\partial_vr$. Combining these two pieces of information, we have
	{\color{black}
		\begin{align}
		\int_{v_1}^{v_2}-\frac{\partial_ur}{r^2\Omega^{-2}}dv &=\int_{r_1}^{r_2}-\frac{\partial_ur}{r^2\Omega^{-2}\partial_vr}dr\nonumber\leq-\partial_ur_2\int_{r_1}^{r_2}\frac{1}{r^2\Omega^{-2}\partial_vr}dr\nonumber\\
		&\leq\frac{-\partial_ur_2}{\Omega_2^{-2}\partial_vr_2}\int_{r_1}^{r_2}\frac{1}{r^2}dr=\frac{\partial_ur_2}{\Omega_2^{-2}\partial_vr_2}\big(\frac{1}{r_2}-\frac{1}{r_1}\big)\label{lemma1eqn3real}
		\end{align}}
	Substituting $\eqref{lemma1eqn2real}$ and $\eqref{lemma1eqn3real}$ back into $\eqref{lemma1eqn1real}$ gives us the desired result.		 
\end{proof}
\begin{lemma}\label{keylemma2real}
	Assume that $\mathcal{D}(0,v_1)\cup\mathcal{R}$ is free of trapped surfaces. Then
	\begin{align*}
	\frac{\Omega_2^{-2}\partial_vr_2}{\Omega_1^{-2}\partial_vr_1}(u)\leq e^{-\eta(u)}
	\end{align*}
	for all $u\in[u_0,u_*]$.
\end{lemma}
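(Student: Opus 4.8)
The statement is exactly the $Q\equiv 0$ case of Lemma~\ref{keylemma2}, so the plan is to run the same computation with the charge terms removed, where it collapses to a one-line Gr\"onwall-type estimate. First I would divide the evolution equation \eqref{UEMS3} by $\Omega^{-2}\partial_vr$ (which is positive, since $\partial_vr>0$ under the no-trapped-surface hypothesis) and integrate in $v$ from $v_1$ to $v_2$. Since $\partial_v\log(\Omega^{-2}\partial_vr)=-4\pi r|\partial_v\phi|^2/\partial_vr$, this gives
\[
\ln\!\Big(\frac{\Omega_2^{-2}\partial_vr_2}{\Omega_1^{-2}\partial_vr_1}\Big)(u)=-4\pi\int_{v_1}^{v_2}\frac{r|\partial_v\phi|^2}{\partial_vr}\,dv .
\]

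Next I would re-express the integrand through the Hawking mass. From the definition $m=\tfrac r2(1+4\Omega^{-2}\partial_ur\partial_vr)$ one has $r-2m=-4r\Omega^{-2}\partial_ur\partial_vr$, and combining this with \eqref{massvreal} yields
\[
\frac{\partial_vm}{r-2m}=\frac{-8\pi r^2\Omega^{-2}\partial_ur|\partial_v\phi|^2}{-4r\Omega^{-2}\partial_ur\partial_vr}=\frac{2\pi r|\partial_v\phi|^2}{\partial_vr},
\]
so that the right-hand side of the first display equals $-2\int_{v_1}^{v_2}\frac{\partial_vm}{r-2m}\,dv$. Here one should record that $r-2m>0$ on $\mathcal{R}$, because $r-2m=-4r\Omega^{-2}\partial_ur\partial_vr$ with $\partial_ur<0$ by Lemma~\ref{negativeincoming} and $\partial_vr>0$ by assumption, and that $r-2m\le r$ because $m\ge 0$ (the remark following Lemma~\ref{negativeincoming}).

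Finally I would estimate: using $\partial_vm\ge 0$ together with $0<r-2m\le r\le r_2$ (the last bound since $\partial_vr>0$ makes $r$ increasing in $v$), one obtains $\int_{v_1}^{v_2}\frac{\partial_vm}{r-2m}\,dv\ge\frac1{r_2}\int_{v_1}^{v_2}\partial_vm\,dv=\frac{m_2-m_1}{r_2}=\frac{\eta}{2}$, hence $\ln(\Omega_2^{-2}\partial_vr_2/\Omega_1^{-2}\partial_vr_1)\le-\eta$, and exponentiating finishes the proof. I do not anticipate any real obstacle: the difficulty in the charged analog, Lemma~\ref{keylemma2}, came entirely from having to absorb the extra term $\frac2{r_2}\int_{r_1}^{r_2}\frac{Q^2}{2r^2}\,dr$ into $\eta$ via Lemma~\ref{bound for charge}, and this term simply vanishes when $Q\equiv 0$; the only mild care needed is the chain of sign inequalities $0<r-2m\le r\le r_2$, all immediate from Lemma~\ref{negativeincoming} and the no-trapped-surface hypothesis.
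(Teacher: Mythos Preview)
Your proof is correct and follows essentially the same approach as the paper: divide \eqref{UEMS3} by $\Omega^{-2}\partial_vr$, integrate in $v$, rewrite the integrand as $2\partial_vm/(r-2m)$ via \eqref{massvreal} and the Hawking mass identity $r-2m=-4r\Omega^{-2}\partial_ur\partial_vr$, and then use $0<r-2m\le r\le r_2$ to bound the integral below by $\eta/2$. You are in fact slightly more careful than the paper in explicitly recording the sign justifications for $r-2m>0$ and $m\ge0$.
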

\begin{proof}
	Dividing both sides of equation {\color{black}$\eqref{UEMS3}$} by $\Omega^{-2}\partial_vr$ and integrating from $v_1$ to $v_2$, we get
	\begin{align*}
	\ln|\Omega_2^{-2}\partial_vr_2|-\ln|\Omega_1^{-2}\partial_vr_1| = \ln\bigg(\frac{\Omega_2^{-2}\partial_vr_2}{\Omega_1^{-2}\partial_vr_1}\bigg)=-4\pi\int_{v_1}^{v_2}\frac{r|\partial_v\phi|^2}{\partial_vr}dv.
	\end{align*}
	By equation $\eqref{massvreal}$ and the definition of the Hawking mass, {\color{black}we have}
	{\color{black}
		\begin{align*}
		\frac{\partial_vm}{r-2m}=\frac{-8\pi r^2\Omega^{-2}\partial_ur|\partial_v\phi|^2}{-4r\Omega^{-2}\partial_ur\partial_vr}=\frac{2\pi r|\partial_v\phi|^2}{\partial_vr}.
		\end{align*}}
	
	Hence for any $u\in[u_0,u_*]$
	
	\begin{align*}
	\ln\bigg(\frac{\Omega_2^{-2}\partial_vr_2}{\Omega_1^{-2}\partial_vr_1}\bigg)&=-2\int_{v_1}^{v_2}\frac{\partial_vm}{r-2m}dv\leq-2\int_{v_1}^{v_2}\frac{1}{r}\partial_vm\text{ }dv\\
	&\leq\frac{-2}{r_2}\int_{v_1}^{v_2}\partial_vm\text{ }dv=-\frac{2(m_2-m_1)}{r_2}=-\eta
	\end{align*}
	Exponentiating both sides of the above inequality gives us the desired result.
\end{proof}
Now we are ready to prove Theorem \ref{thm1.1}.
\begin{proof} (\textit{Theorem \ref{thm1.1}})
	We consider the dimensionless length scale $x(u):=\frac{r_2(u)}{r_2(u_0)}$. Note that $x$ decreases as $u$ increases and $x(u_0) = 1$. {\color{black}We will show that $\frac{d\eta}{dx}$ is bounded from above, i.e. $\frac{d\eta}{du}$ is bounded from below, and hence obtain a lower bound for $\eta(u_*)$.  If this lower bound is greater than $1$, this implies $S(u_*,v_2)$ is a trapped surface, for}
		
		\begin{align*}
		\eta(u_*)= \frac{2(m_2-m_1)}{r_2}(u_*)>1&\implies \frac{2m_2}{r_2}(u_*)>1\\
		&\implies S(u_*,v) \text{ is a trapped surface.}
		\end{align*} See Figure \ref{fig:graphreal} for an illustration. \\
		\begin{figure}
			\centering
			\begin{tikzpicture}
			\draw[->] (0,0) -- (6,0) node[right] {$x$};
			\draw[->] (0,0) -- (0,6) node[above] {$\eta$};
			\draw[scale=1,domain=0.5:4,smooth,variable=\x,blue] plot ({\x},{0.2*(\x-5)*(\x-5)+2});
			\draw [dashed] (4,0)node[anchor = north]{$x=1$} -- (4,2.2);
			\draw [dashed] (0,2.2)node[anchor = east]{$\eta = \eta_0$} -- (4,2.2);
			\draw [dashed] (1,0)node[anchor = north]{$x=x_*$} -- (1,5.35);
			\draw [dashed] (0,5.35)node[anchor = east]{$\eta \geq 1$} -- (1,5.35);
			\end{tikzpicture}
			\caption{Idea of Proof of Theorem \ref{thm1.1}}
			\label{fig:graphreal}
		\end{figure}
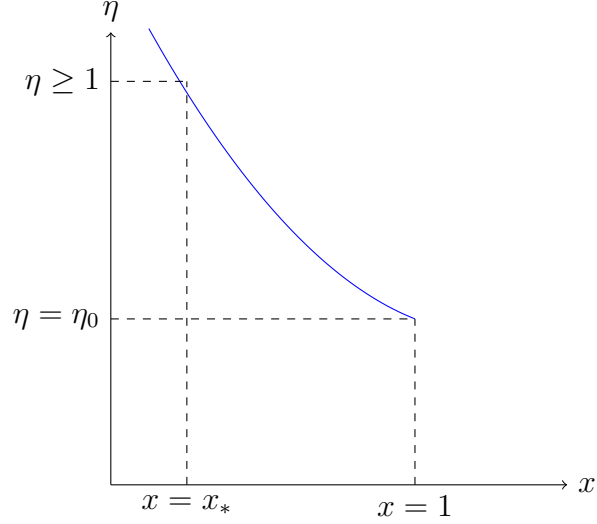
	
	To be precise, we prove a Gronwall-like inequality under the assumption that there is no trapped surface formed before $u_*$. In particular, we  assume that $\partial_vr_2(u)>0$ for all $u\in [u_0,u_*]$. We show that this assumption will lead to a contradiction.\\
	
	Assuming that $\partial_vr_2(u)>0$ for all $u\in [u_0,u_*]$, the following chain of {\color{black}identities} hold in the region $[u_0,u_*]\times [v_1,v_2]$:
	\begin{align}
	\frac{d\eta}{dx} &= \frac{d\eta}{du}\bigg/\frac{dx}{du} = \frac{r_2(u_0)}{\partial_ur_2}\bigg(-\frac{2\partial_ur_2}{r_2^2}(m_2-m_1)+\frac{2}{r_2}\partial_u(m_2-m_1)\bigg)\nonumber\\
	&=-\frac{\eta}{x}+\frac{2}{x\partial_ur_2}(-8\pi r_2^2\Omega_2^{-2}\partial_vr_2|\partial_u\phi_2|^2+8\pi r_1^2\Omega_1^{-2}\partial_vr_1|\partial_u\phi_1|^2)\nonumber\\
	&=-\frac{\eta}{x}-\frac{16\pi\partial_vr_2\Omega_2^{-2}}{x\partial_ur_2}\bigg(r_2^2|\partial_u\phi_2|^2-\frac{\Omega_1^{-2}\partial_vr_1}{\Omega_2^{-2}\partial_vr_2}r_1^2|\partial_u\phi_1|^2\bigg)\label{inequalitiesreal}.
	\end{align}
	
	Using Lemma $\ref{keylemma2real}$, we can bound the factor in the second term:
	\begin{align*}
	r_2^2|\partial_u\phi_2|^2-\frac{\Omega_1^{-2}\partial_vr_1}{\Omega_2^{-2}\partial_vr_2}r_1^2|\partial_u\phi_1|^2&\leq r_2^2|\partial_u\phi_2|^2-e^\eta {\color{black}r_1^2}|\partial_u\phi_1|^2\\
	&=\Theta^2+2\Theta\partial_u\phi_1r_1+(1-e^\eta)r_1^2|\partial_u\phi_1|^2.
	\end{align*} 
	
	The last expression, being a quadratic in $\Theta$, can be bounded by a monic quadratic polynomial in $\Theta$:
	
	\begin{align*}
	\Theta^2+2\Theta\partial_u\phi_1r_1+(1-e^\eta)r_1^2|\partial_u\phi_1|^2&\leq\bigg(1+\frac{1}{e^\eta-1}\bigg)\Theta^2\leq\bigg(1+\frac{1}{\eta}\bigg)\Theta^2,
	\end{align*}
	since $\eta\geq 0$. This last inequality combines with $\eqref{inequalitiesreal}$ to give:
	
	\begin{align*}
	\frac{d\eta}{dx}\leq-\frac{\eta}{x}-\frac{16\pi\partial_vr_2\Omega_2^{-2}}{x\partial_ur_2}\bigg(1+\frac{1}{\eta}\bigg)\Theta^2.
	\end{align*}
	
	Applying Lemma $\ref{keylemma1real}$, we have 
	\begin{align}
	\frac{d\eta}{dx}&\leq -\frac{\eta}{x}-\frac{2}{x}\bigg(1+\frac{1}{\eta}\bigg)\bigg(\frac{1}{r_2}-\frac{1}{r_1}\bigg)(m_2-m_1)\nonumber\\
	&=-\frac{\eta}{x}+\frac{\eta}{x}\bigg(1+\frac{1}{\eta}\bigg)\bigg(\frac{r_2}{r_1}-1\bigg)\label{inequalities2real}.
	\end{align}
	Using {\color{black}(\ref{delta inequality})} we get
	\begin{align*}
	\delta = \frac{r_2-r_1}{r_2-(r_2-r_1)}\leq\frac{r_2(u_0)-r_1(u_0)}{r_2(u)-(r_2(u_0)-r_1(u_0))}=\frac{\delta_0}{x(1+\delta_0)-\delta_0}.
	\end{align*} Combining {\color{black}the above} with $\eqref{inequalities2real}$, we obtain
	\begin{align*}
	\frac{d\eta}{dx}\leq\eta\bigg(\frac{\delta_0}{x^{{\color{black}2}}(1+\delta_0)-{\color{black}x}\delta_0}-\frac{1}{x}\bigg)+\frac{\delta_0}{x^{{\color{black}2}}(1+\delta_0)-{\color{black}x}\delta_0}.
	\end{align*}
	Defining $g(x):=1-\frac{\delta_0}{x(1+\delta_0)-\delta_0}$ and $f(x):=\frac{\delta_0}{x(1+\delta_0)-\delta_0}$, we obtain the following differential inequality:
	\begin{align*}
	\frac{d\eta}{dx}+\eta\frac{g(x)}{x}-\frac{f(x)}{x}\leq 0.
	\end{align*}
	To solve this differential inequality, we multiply by an integrating factor {\color{black} then integrate with respect to $x$}:
	\begin{gather*}
	\frac{d}{dx}\bigg(e^{-\int_x^1\frac{g(s)}{s}ds}\eta(x)\bigg)-e^{-\int_x^1\frac{g(s)}{s}ds}\frac{f(x)}{x}\leq 0\\
	\implies\bigg[e^{-\int_{{\color{black}x'}}^1\frac{g(s)}{s}ds}\eta(x')\bigg]_{x'=x}^{x'=1}\leq \int_x^1e^{-\int_{x'}^1\frac{g(s)}{s}ds}\frac{f}{x'}dx'.
	\end{gather*}
	We denote $G(x):= \int_x^1\frac{g(s)}{s}ds$ and $F(x):=\int_x^1e^{-G(x')}\frac{f}{x'}dx'$. In this notation, we get
	\begin{align*}
	\eta_0-e^{-G(x)}\eta(x)\leq F(x) \implies\eta(x)\geq e^{G(x)}\big(-F(x)+\eta_0\big).
	\end{align*}
	Hence, in the region $[u_0,u_*]\times [v_1,v_2]$ free of trapped surfaces, we conclude that the following inequality holds:
	\begin{align}\label{maininequalityreal}
	\eta(x)\geq e^{G(x)}(-F(x)+\eta_0).
	\end{align}
	
	{\color{black}Now, we} compute explicit expressions for $G(x)$ and $F(x)$:
	
	\begin{align*}
	G(x) &= \int_x^1\frac{1}{s}-\frac{1}{s}\frac{\delta_0}{s(1+\delta_0)-\delta_0}ds\\
	&=\int_x^1\frac{1}{s}+\frac{1}{s}-\frac{1+\delta_0}{s(1+\delta_0)-\delta_0}ds\\
	&=\ln\bigg(\frac{s^2}{s(1+\delta_0)-\delta_0}\bigg)\bigg|^1_x = \ln\bigg(\frac{x(1+\delta_0)-\delta_0}{x^2}\bigg)
	\end{align*}
	
	\begin{align*}
	F(x) &= \int_x^1\frac{s^2}{s(1+\delta_0)-\delta_0}\frac{f}{s}ds = \int_x^1\frac{\delta_0s}{(s(1+\delta_0)-\delta_0)^2}ds\\
	&=\frac{\delta_0}{1+\delta_0}\int_x^1\frac{1}{s(1+\delta_0)-\delta_0}+\frac{\delta_0}{(s(1+\delta_0)-\delta_0)^2}ds\\
	&=\frac{\delta_0}{(1+\delta_0)^2}\ln\bigg(s(1+\delta_0)-\delta_0\bigg)\bigg|_x^1 - \frac{\delta_0^{{\color{black}2}}}{(1+\delta_0)^2}\frac{1}{s(1+\delta_0)-\delta_0}\bigg|_x^1\\
	&=\frac{\delta_0}{(1+\delta_0)^2}\bigg(\ln\big(\frac{1}{x(1+\delta_0)-\delta_0}\big)+\delta_0\big(\frac{1}{x(1+\delta_0)-\delta_0}-1\big)\bigg).
	\end{align*} 
	Using the assumption that there is no trapped surface or MOTS, we have $\eta(x)=\frac{2(m_2-m_1)}{r_2}\leq\frac{2m_2}{r_2}<1$ for $x\in [\frac{3\delta_0}{1+\delta_0},1]$. Rearranging $\eqref{maininequalityreal}$ {\color{black}results in}
	\begin{align*}
	\eta_0\leq e^{-G(x)}\eta(x)+F(x)<e^{-G(x)}+F(x),\text{ for all }x\in \bigg[\frac{3\delta_0}{1+\delta_0},1\bigg].
	\end{align*}
	In particular, we can substitute $x = \frac{3\delta_0}{1+\delta_0}$ into the above equation and get 
	\begin{align*}
	\eta_0< E(\delta_0) = \frac{\delta_0}{(1+\delta_0)^2}\bigg[\log\big(\frac{1}{2\delta_0}\big)+5-\delta_0\bigg].
	\end{align*}
	This gives us the desired contradiction. 
\end{proof}

\subsection{A Special Case of Minkowskian incoming characteristic initial data} 
Prescribe Minkowskian data along $v=v_1$, we can improve the lower bound required on $\eta_0$ in Theorem \ref{thm1.1}. 
\begin{customthm}{1.2}
	Assume that {\color{black}Minkowskian data are prescribed along $v=v_1$ and require $\phi(u,v_1)=0$}. Suppose that the following lower bound on $\eta_0$ holds:
\begin{align*}
\eta_0>\f92\delta_0,
\end{align*}
then there exist a MOTS {\color{black}or a trapped surface} in $[u_0,u_*]\times[v_1,v_2]\subset\mathcal{R}$, i.e. $\partial_vr\leq 0$ at some point in $[u_0,u_*]\times[v_1,v_2]$.
\end{customthm}

\begin{proof}\textit{(Theorem \ref{thm1.2})}
	In this special case, we have $\phi_1\equiv 0$ and $m_1 \equiv 0$. Equation \eqref{inequalitiesreal} now reads: 
	\begin{align*}
	\frac{d\eta}{dx} = -\frac{\eta}{x}-\frac{16\pi\partial_vr_2\Omega_2^{-2}}{x\partial_ur_2}r_2^2|\partial_u\phi_2|^2,
	\end{align*}
	and we also have
	\begin{align*}
	\Theta^2 = r_2^2|\partial_u\phi_2|^2.
	\end{align*}
	Combining the above equations, followed by applying Lemma \ref{keylemma1real}, we get:
	\begin{align*}
	\frac{d\eta}{dx} = -\frac{\eta}{x}-\frac{16\pi\partial_vr_2\Omega_2^{-2}}{x\partial_ur_2}\Theta^2&\leq-\frac{\eta}{x}-\frac{2}{x}\bigg(\frac{1}{r_2}-\frac{1}{r_1}\bigg)(m_2-m_1)\\
	&=-\frac{\eta}{x}+\frac{\eta}{x}\bigg(\frac{r_2}{r_1}-1\bigg)\\
	&\leq -\frac{\eta}{x}+\frac{\eta}{x}\frac{\delta_0}{x(1+\delta_0)-\delta_0}.
	\end{align*}
	Integrating the above inequality:
	\begin{align*}
	&\int_x^1\frac{1}{\eta}d\eta\leq\int_x^1-\frac{1}{s}+\frac{1}{s}\frac{\delta_0}{s(1+\delta_0)-\delta_0}ds=\int_x^1-\frac{2}{s}+\frac{1+\delta_0}{s(1+\delta_0)-\delta_0}ds\\
	&\implies \ln\bigg(\frac{\eta_0}{\eta(x)}\bigg)\leq\ln\bigg(\frac{x^2}{x(1+\delta_0)-\delta_0}\bigg)\\
	&\implies \eta_0\leq\eta(x)\frac{x^2}{x(1+\delta_0)-\delta_0}.
	\end{align*}
	Under the assumption of no trapped surfaces or MOTS, we have $\eta(x)<1$ for all $x\in[\frac{3\delta_0}{1+\delta_0},1]$, hence
	\begin{align*}
	\eta_0\leq \frac{x^2}{x(1+\delta_0)-\delta_0}{\color{black}\text{, for all }x\in\bigg[\frac{3\delta_0}{1+\delta_0},1\bigg].}
	\end{align*}
	In particular, choosing $x=\frac{3\delta_0}{1+\delta_0}$ we have
	\begin{align*}
	\eta_0\leq \f92\delta_0.
	\end{align*}
	{\color{black}This gives us} the desired contradiction to the hypothesis.   
\end{proof}

\end{document}